\newtheorem{thm}{Theorem}[section]
\newtheorem{pro}[thm]{Proposition}
\newtheorem{cor}[thm]{Corollary}
\newtheorem{lem}[thm]{Lemma}
\newtheorem{defi}[thm]{Definition}
\newtheorem{rmk}[thm]{Remark}
\newtheorem{ex}[thm]{Example}
\begin{document}

\title{The natural measure of a symbolic dynamical system}


\author{Wen-Guei Hu$^{*}$}
\address{Department of Applied Mathematics, National Chiao Tung University, Hsinchu 300, Taiwan}
\email{wghu@mail.nctu.edu.tw}\thanks{$^{*}$The author would like to thank the National Science Council, R.O.C. and
the ST Yau Center for partially supporting this research.}

\author{Song-Sun Lin$^{\dagger}$}
\address{Department of Applied Mathematics, National Chiao Tung University, Hsinchu 300, Taiwan}
\email{sslin@math.nctu.edu.tw} \thanks{$^{\dagger}$The author would like to thank the National Science Council, R.O.C. (Contract No. NSC 101-2115-M-009-007) and
the ST Yau Center for partially supporting this research.}

\begin{abstract}
This study investigates the natural or intrinsic measure of a symbolic dynamical system $\Sigma$. The measure $\mu([i_{1},i_{2},\cdots,i_{n}])$ of a pattern \\ $[i_{1},i_{2},\cdots,i_{n}]$ in $\Sigma$ is an asymptotic ratio of $[i_{1},i_{2},\cdots,i_{n}]$, which arises in all patterns of length $n$ within very long patterns, such that in a typical long pattern, the pattern $[i_{1},i_{2},\cdots,i_{n}]$ appears with frequency $\mu([i_{1},i_{2},\cdots,i_{n}])$. When $\Sigma=\Sigma(A)$ is a shift of finite type and $A$ is an irreducible $N\times N$ non-negative matrix, the measure $\mu$ is the Parry measure. $\mu$ is ergodic with maximum entropy. The result holds for sofic shift $\mathcal{G}=(G,\mathcal{L})$, which is irreducible. The result can be extended to $\Sigma(A)$, where $A$ is a countably infinite matrix that is irreducible, aperiodic and positive recurrent. By using the Krieger cover, the natural measure of a general shift space is studied in the way of a countably infinite state of sofic shift, including context free shift. The Perron-Frobenius Theorem for non-negative matrices plays an essential role in this study.
\end{abstract}

\maketitle

\section{Introduction}
This paper investigates the natural or intrinsic measure of a symbolic dynamical system. The measure is an average of allowable words or admissible patterns, which are made of the system.

Let $\mathcal{S}_{N}=\{1,2,\cdots,N\}$ be the set of symbols with $N\geq 2$, and denote by
$\Sigma(N)$ the set of all two-sided sequences with symbols in $\mathcal{S}_{N}$:

\begin{equation}\label{eqn:1.1}
\Sigma(N)\equiv \mathcal{S}_{N}^{\mathbf{Z}^{1}}=\left\{(x_{n})_{n=-\infty}^{\infty} \hspace{0.1cm}\mid\hspace{0.1cm} x_{n}\in \mathcal{S}_{N}  \right\}.
\end{equation}
Let $\sigma$ be the shift map that is defined by
\begin{equation*}
(\sigma x)_{n}=x_{n+1}.
\end{equation*}
Then, $(\Sigma(N),\sigma)$ is the two-sided full shift space. $\Sigma(N)$ is compact when it is equipped with product topology.
A shift space $\Sigma\subset \Sigma(N)$ is closed and invariant under $\sigma$. $\Sigma$ is also equipped with the topology that is reduced from $\Sigma(N)$.

A cylinder in $\Sigma$ is given by
\begin{equation}\label{eqn:1.2}
C([\alpha;i_{1},i_{2},\cdots, i_{n}])=\{x\in\Sigma  \hspace{0.1cm}\mid\hspace{0.1cm} x_{\alpha}=i_{1}, x_{\alpha+1}=i_{2},\cdots, x_{\alpha+n-1}=i_{n}\}
\end{equation}
where $i_{k}\in\mathcal{S}_{N}$, $1\leq k\leq n$ and $\alpha\in\mathbb{Z}^{1}$ is the initial site. $[i_{1},i_{2},\cdots, i_{n}]$ is called an admissible pattern if $C([\alpha;i_{1},i_{2},\cdots, i_{n}])\neq\emptyset$ for some $\alpha\in\mathbb{Z}^{1}$. Denote by $\mathcal{B}_{n}(\Sigma)$ the set of all admissible patterns or allowable words of length $n$:

\begin{equation}\label{eqn:1.3}
\mathcal{B}_{n}(\Sigma)=\left\{[i_{1},i_{2},\cdots, i_{n}] \hspace{0.1cm}\mid\hspace{0.1cm} C([\alpha;i_{1},i_{2},\cdots, i_{n}])\neq\emptyset \text{ for some }\alpha\in\mathbb{Z}^{1}\right\}.
\end{equation}
Let $\mathcal{B}(\Sigma)=\underset{n=1}{\overset{\infty}{\bigcup}}\mathcal{B}_{n}(\Sigma)$. $\Sigma$ is called irreducible if for any ordered pair $U_{1}$ and $U_{2}\in\mathcal{B}(\Sigma)$, there is a $W\in\mathcal{B}(\Sigma)$ such that $U_{1}W U_{2}\in \mathcal{B}(\Sigma)$.

Let $\mathfrak{B}(\Sigma)$ be the $\sigma$-algebra that is generated by the collection $\mathcal{C}$ of all cylinders:

\begin{equation}\label{eqn:1.4}
\mathcal{C}(\Sigma)=\left\{C([\alpha;i_{1},i_{2},\cdots, i_{n}])  \hspace{0.1cm}\mid\hspace{0.1cm} i_{k}\in\mathcal{S}_{N}, 1\leq k\leq n \text{ and }\alpha\in\mathbb{Z}^{1}  \right\}.
\end{equation}
Then, $(\Sigma,\mathfrak{B}(\Sigma))$ is a measurable space.

If $\sigma$ is ergodic with respect to $(\Sigma, \mathfrak{B}(\Sigma), m)$, where $m$ is a probability measure, then the Birkhoff ergodic theorem implies that for any $E\in \mathfrak{B}(\Sigma)$, and a.e. $x\in\Sigma$,

\begin{equation}\label{eqn:1.4-1}
\frac{1}{n}\left|\left\{i\in\{0,1,\cdots,n-1\}\hspace{0.1cm}\mid \hspace{0.1cm}\sigma^{i}(x)\in E\right\}\right| \rightarrow m(E),
\end{equation}
, where $|S|=\sharp\{S\}$ is the cardinality of finite set $S$.
Therefore, a natural or intrinsic ergodic measure for $(\Sigma,\mathfrak{B}(\Sigma))$ that can better describe the dynamical system $\Sigma$ would be of particular value.

In this work, an intrinsic probability measure $\mu$ on $(\Sigma,\mathfrak{B}(\Sigma))$ is obtained by taking the asymptotic ratios of the admissible patterns $C\left([i_{1},i_{2},\cdots, i_{n}]\right)$ to  $\mathcal{B}_{n}(\Sigma)$  by the following process.

The simplest asymptotic ratio is defined as follows:
Let the finite cylinder $C_{k,l}([i_{1},i_{2},\cdots, i_{n}])$ of length $n+k+l$ be

\begin{equation}\label{eqn:1.5}
\begin{array}{rl}
 & C_{k,l}([i_{1},i_{2},\cdots, i_{n}]) \\
 & \\
  = & \left\{ x=(x_{-k+1},\cdots,x_{-1},x_{0},x_{1},\cdots,x_{n+l} )\in  \mathcal{B} _{n+k+l}(\Sigma)\hspace{0.1cm}\mid\hspace{0.1cm}x_{j}=i_{j}, 1\leq j\leq n \right\}
\end{array}
\end{equation}
and define

\begin{equation}\label{eqn:1.6}
\begin{array}{rl}
 & \mu([i_{1},i_{2},\cdots, i_{n}]) \\
 & \\
=  & \underset{k,l\rightarrow\infty}{\lim} \left|C_{k,l}([i_{1},i_{2},\cdots, i_{n}]) \right|/ \left|\mathcal{B}_{n+k+l}(\Sigma) \right|
\end{array}
\end{equation}
whenever the limit exists.
Equation (\ref{eqn:1.6}) can be interpreted as the ratio of $n$-patterns $[i_{1},i_{2},\cdots, i_{n}]$ within a window of width $n$ for long admissible patterns.

First, the shift space $\Sigma$ that has a limit that is given by (\ref{eqn:1.6}) is identified and the properties of $\mu$ are then investigated. For example, $\mu$ is ergodic, has maximal entropy and so on. When $\mu$ is a measure that is defined as in (\ref{eqn:1.6}), $\mu$ is called a natural or an intrinsic measure, since it is defined by the intrinsic properties of $\Sigma$, depending only on its own admissible patterns.

When $\mu$ is ergodic, then (\ref{eqn:1.4-1}) and (\ref{eqn:1.6}) imply that the time average of the orbit of almost all $x$ tends the space average (ratio of admissible patterns):

\begin{equation}\label{eqn:1.6-1}
\begin{array}{rl}
 & \frac{1}{m}\left|\left\{i\in\{0,1,\cdots,m-1\}\hspace{0.1cm}\mid \hspace{0.1cm}\sigma^{i}(x)\in C([i_{1},i_{2},\cdots, i_{n}])\right\} \right| \\ & \\\sim  & C_{k,l}([i_{1},i_{2},\cdots, i_{n}])/\mathcal{B}_{n+k+l}(\Sigma)
\end{array}
\end{equation}
for large $m$, $k$ and $l$. Hence, for a dynamical system and a typical orbit, a pattern $\left[i_{1},i_{2},\cdots,i_{n}\right]$ appears with frequency $\mu\left(\left[i_{1},i_{2},\cdots,i_{n}\right]\right)$.

The simplest shift space that ensures the existence of natural measure that is given by (\ref{eqn:1.6}) is shift of finite type $\Sigma(A)$, where $A=[a_{i,j}]_{N\times N}$ is an irreducible and aperiodic $0$-$1$ matrix.
In this case, the measure $\mu(A)$ turns out to be the well-known Parry measure \cite{95}, which is given by

\begin{equation}\label{eqn:1.6-2}
\mu([i_{1},i_{2},\cdots, i_{n}])=\frac{u_{i_{1}}v_{i_{n}}}{\lambda^{n-1}}a_{i_{1},i_{2}}a_{i_{2},i_{3}}\cdots a_{i_{n-1},i_{n}},
\end{equation}
where $\lambda$ is the Perron value of $A$, and $V=(v_{1},v_{2},\cdots,v_{N})^{t}$ and $U=(u_{1},u_{2},\cdots, u_{N})$ are the right and left eigenvectors with respect to $\lambda$, normalized by $UV=1$.

 If $A$ is only irreducible but not aperiodic, then the limit of (\ref{eqn:1.6}) does not exist; see Example 3.4. When $A$ is periodic with period $p$, (\ref{eqn:1.6}) can be replaced by

\begin{equation}\label{eqn:1.7}
 \mu\left(\left[i_{1},i_{2},\cdots, i_{n} \right]\right)=\underset{k,l\rightarrow\infty}{\lim} \hspace{0.15cm} \frac{1}{p} \underset{j=0}{\overset{p-1}{\sum}} \left|C_{k-j,l+j}([i_{1},i_{2},\cdots, i_{n}]) \right|/ \left|\mathcal{B}_{n+k+l}(\Sigma) \right|,
\end{equation}
which is the average of $p$ successive patterns.

Again, the measure $\mu$ is the Parry measure and (\ref{eqn:1.6-2}) holds. Therefore, the following results are obtained.

\begin{thm}
\label{theorem:1.1}
If $A=[a_{i,j}]_{N\times N}$ is irreducible and aperiodic, then the natural measure that is defined by (\ref{eqn:1.6}) exists and equals the Parry measure. If $A$ is irreducible and periodic with period $p\geq2$, then the natural measure defined by  (\ref{eqn:1.7}) exists and equals the Parry measure. Furthermore, the natural measure is the only measure that is ergodic and has maximal entropy.
\end{thm}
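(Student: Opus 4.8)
The plan is to reduce everything to counting lattice paths in the transition graph of $A$ and then to invoke the Perron--Frobenius asymptotics. Let $\mathbf{1}=(1,\dots,1)^{t}$. First I would observe that an admissible word of length $m$ corresponds to a path of length $m-1$ in the graph of $A$, so that $\left|\mathcal{B}_{m}(\Sigma)\right|=\mathbf{1}^{t}A^{m-1}\mathbf{1}$. Next, fixing the central block $[i_{1},\dots,i_{n}]$ decouples the left and right extensions, since admissibility only constrains neighbouring symbols; hence for an admissible pattern $\left|C_{k,l}([i_{1},\dots,i_{n}])\right|=\left(\mathbf{1}^{t}A^{k}\right)_{i_{1}}\left(A^{l}\mathbf{1}\right)_{i_{n}}$, while a forbidden transition inside the block makes the count $0$. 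Writing the admissibility indicator as $\prod_{j=1}^{n-1}a_{i_{j},i_{j+1}}$ absorbs both cases.

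For the aperiodic case $A$ is primitive, so the Perron--Frobenius theorem gives $\lambda^{-m}A^{m}\to VU$, the rank-one spectral projection. Dividing the numerator of (\ref{eqn:1.6}) by $\lambda^{k+l}$ and the denominator by $\lambda^{n+k+l-1}$, the surviving power of $\lambda$ is $\lambda^{-(n-1)}$; the scalar factors $\mathbf{1}^{t}V$ and $U\mathbf{1}$ produced by the three limits cancel between numerator and denominator, leaving $\mu([i_{1},\dots,i_{n}])=\lambda^{-(n-1)}u_{i_{1}}v_{i_{n}}\prod_{j=1}^{n-1}a_{i_{j},i_{j+1}}$, which is exactly (\ref{eqn:1.6-2}). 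The joint limit $k,l\to\infty$ genuinely exists because the factorization decouples the two variables.

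For the periodic case the projection no longer converges: partitioning the symbols into the $p$ cyclic classes $C_{0},\dots,C_{p-1}$, the entry $(A^{m})_{i,j}$ vanishes unless the class of $j$ is that of $i$ shifted by $m$, and along the admissible residues $\lambda^{-m}(A^{m})_{i,j}\to p\,v_{i}u_{j}$. Consequently $\lambda^{-k}(\mathbf{1}^{t}A^{k})_{i_{1}}$ and $\lambda^{-l}(A^{l}\mathbf{1})_{i_{n}}$ converge only to phase-dependent partial sums $\sum_{i_{0}\in C_{r}}v_{i_{0}}$ and $\sum_{j\in C_{r'}}u_{j}$, which oscillate with $k,l\bmod p$; this is the mechanism behind the non-existence in Example 3.4. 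The average in (\ref{eqn:1.7}) keeps the total length $n+k+l$, hence the denominator, fixed while sweeping the phase $j=0,\dots,p-1$; using the congruence forced by admissibility, namely that $i_{n}$ lies $n-1$ classes ahead of $i_{1}$, the swept class sums in the numerator match those appearing in the denominator, and the ratio again collapses to $\lambda^{-(n-1)}u_{i_{1}}v_{i_{n}}\prod_{j=1}^{n-1}a_{i_{j},i_{j+1}}$. The careful bookkeeping of the cyclic phases in this averaging is the step I expect to be the main obstacle.

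Finally, for ergodicity, maximality and uniqueness I would recognize (\ref{eqn:1.6-2}) as the stationary Markov measure with initial law $\pi_{i}=u_{i}v_{i}$ and transition matrix $P_{i,j}=a_{i,j}v_{j}/(\lambda v_{i})$: the identities $AV=\lambda V$, $UA=\lambda U$ and $UV=1$ show that $P$ is stochastic and $\pi$ is $P$-stationary with $\sum_{i}\pi_{i}=1$, so $\mu$ is a genuine $\sigma$-invariant probability measure, and the $v$-factors telescope to reproduce the cylinder values. A direct computation gives the entropy $-\sum_{i,j}\pi_{i}P_{i,j}\log P_{i,j}=\log\lambda$, which equals the topological entropy $\log\lambda$ of $\Sigma(A)$; by the variational principle $\mu$ has maximal entropy, irreducibility of $P$ yields ergodicity, and Parry's theorem that an irreducible shift of finite type carries a unique measure of maximal entropy gives the asserted uniqueness.
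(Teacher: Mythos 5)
Your proposal is correct and follows essentially the same route as the paper: the same path-counting identities for $\left|\mathcal{B}_{m}\right|$ and $\left|C_{k,l}\right|$, Perron--Frobenius asymptotics of $\lambda^{-m}A^{m}$ (the paper does this via the Jordan form, you via the rank-one projection, and in the periodic case via cyclic classes where the paper uses the roots-of-unity eigenvectors of Theorem 3.2), and the identification of the limit with the Markov measure $(\pi,P)$ whose uniqueness, ergodicity and maximal entropy are quoted from Parry/Walters exactly as in the paper's Theorem 2.7.
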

%
%
%
%
%

Now, consider the sofic shift, which is a factor of shift of finite type. Usually, sofic shift is represented by a labeled graph $\mathcal{G}$, which is a pair $\mathcal{G}=(G,\mathcal{L})$ where the labeling $\mathcal{L}$ assigns to each edge of graph $G$ a label from finite alphabets $\mathcal{S}=\left\{s_{1},s_{2},\cdots, s_{K}\right\}$.
Sofic shift is called irreducible (or aperiodic) if $G$ is irreducible (or aperiodic). $\mathcal{G}$ is right-resolving if the edges carry different labels at each vertex.

The adjacency matrix $\mathbb{A}_{s}=[a_{s;i,j}]$ of the alphabet $s$ is defined by $a_{s;i,j}=1$ when the alphabet $s$
 appears on the edge from vertex $i$ to $j$. Otherwise, $a_{s;i,j}=0$. The (total) adjacency matrix is given by $\mathbb{A}=\underset{i=1}{\overset{K}{\sum}}\mathbb{A}_{s_{i}}$.

Denote by $\mathcal{B}_{n}(X_{\mathcal{G}})$ the set of all admissible patterns with length $n$ in $X_{\mathcal{G}}$ and let

\begin{equation*}
\begin{array}{rl}
 & C_{k,l}([s_{i_{1}},s_{i_{2}},\cdots,s_{i_{n}}]) \\
 & \\
  = & \left\{ x=(x_{-k+1},\cdots,x_{-1},x_{0},x_{1},\cdots,x_{n+l} )\in  \mathcal{B}_{n+k+l}(X_{\mathcal{G}})\hspace{0.1cm}\mid\hspace{0.1cm}x_{j}=s_{i_{j}}, 1\leq j\leq n \right\}.
\end{array}
\end{equation*}
Then, the natural measure $\mu=\mu_{\mathcal{G}}$ can be defined in a manner similar to the definitions (\ref{eqn:1.6}) and (\ref{eqn:1.7}), as follows. When $\mathbb{A}$ is irreducible and aperiodic,

\begin{equation}\label{eqn:1.7-1}
\mu\left(\left[s_{i_{1}},s_{i_{2}},\cdots,s_{i_{n}}\right]\right)=\underset{k,l\rightarrow\infty}{\lim} \left|C_{k,l}([s_{i_{1}},s_{i_{2}},\cdots,s_{i_{n}}]) \right|/ \left|\mathcal{B}_{n+k+l}(X_{\mathcal{G}}) \right|;
\end{equation}
when $\mathbb{A}$ is irreducible with period $p\geq 2$,

\begin{equation}\label{eqn:1.7-2}
 \mu\left(\left[s_{i_{1}},s_{i_{2}},\cdots,s_{i_{n}}\right]\right)=\underset{k,l\rightarrow\infty}{\lim} \hspace{0.15cm} \frac{1}{p} \underset{j=0}{\overset{p-1}{\sum}} \left|C_{k-j,l+j}([s_{i_{1}},s_{i_{2}},\cdots,s_{i_{n}}]) \right|/ \left|\mathcal{B}_{n+k+l}(X_{\mathcal{G}}) \right|.
\end{equation}

The natural measure of sofic shift can be obtained as follows.

\begin{thm}
\label{theorem:1.2}
Assume sofic shift $\mathcal{G}=(G,\mathcal{L})$ is irreducible and right-resolving; then, the natural measure $\mu_{\mathcal{G}}$ exists and is given by

\begin{equation}\label{eqn:1.8}
\mu_{\mathcal{G}}\left(\left[s_{i_{1}},s_{i_{2}},\cdots,s_{i_{n}}\right]\right)=\frac{1}{\lambda^{n}}\underset{i}{\sum}\underset{j}{\sum} u_{i} (\mathbb{A}_{s_{i_{1}}}\mathbb{A}_{s_{i_{2}}}\cdots\mathbb{A}_{s_{i_{n}}})_{i,j}v_{j},
\end{equation}
where $\lambda$ is the Perron value of $\mathbb{A}$, and $V=(v_{1},v_{2},\cdots,v_{n})^{t}$ and $U=(u_{1},u_{2},\cdots,u_{n})$ with $UV=1$ are the associated right and left eigenvectors. The natural measure is the only measure with ergodicity and maximal entropy. Furthermore, the natural measure $\mu_{\mathcal{G}}$ is the hidden Markov measure of the underlying shift of finite type of graph $G$.
\end{thm}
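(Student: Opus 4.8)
The plan is to reduce everything to the underlying shift of finite type and to Theorem \ref{theorem:1.1}. Let $\Sigma_{G}$ denote the edge shift of $G$, i.e. the shift of finite type whose symbols are the edges of $G$ and whose associated adjacency matrix on vertices is exactly $\mathbb{A}$, which is irreducible. Since the edge shift is topologically conjugate to a vertex shift over a $0$--$1$ matrix, Theorem \ref{theorem:1.1} applies and yields the natural (Parry) measure $\mu_{G}$ on $\Sigma_{G}$, which is ergodic, has maximal entropy $\log\lambda$, and satisfies $\mu_{G}([e_{1}\cdots e_{n}])=u_{q_{0}}v_{q_{n}}/\lambda^{n}$ for a path $e_{1}\cdots e_{n}$ through vertices $q_{0},q_{1},\dots,q_{n}$. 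The labeling $\mathcal{L}$ induces a sliding block code $\pi:\Sigma_{G}\to X_{\mathcal{G}}$, and the first task is to identify the candidate measure $\nu=\pi_{*}\mu_{G}$ and show it equals the right-hand side of (\ref{eqn:1.8}). Summing the Parry weights over the finitely many paths carrying a prescribed label $w=s_{i_{1}}\cdots s_{i_{n}}$ gives
\begin{equation*}
\nu([w])=\sum_{\pi(p)=w}\mu_{G}([p])=\frac{1}{\lambda^{n}}\sum_{i,j}u_{i}\,(\mathbb{A}_{s_{i_{1}}}\mathbb{A}_{s_{i_{2}}}\cdots\mathbb{A}_{s_{i_{n}}})_{i,j}\,v_{j},
\end{equation*}
because $(\mathbb{A}_{s_{i_{1}}}\cdots\mathbb{A}_{s_{i_{n}}})_{i,j}$ counts precisely the paths from $i$ to $j$ whose label is $w$; this is exactly (\ref{eqn:1.8}).

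Next I would prove that the counting limits (\ref{eqn:1.7-1}) and (\ref{eqn:1.7-2}) exist and equal $\nu([w])$. The right-resolving hypothesis is the key bookkeeping device: from a fixed starting vertex a label word determines at most one path, so paths of length $m$ in $G$ are in bijection with pairs (word of length $m$, admissible starting vertex), and the number of paths presenting $w$ equals $\left|R(w)\right|=\sum_{i,j}(\mathbb{A}_{s_{i_{1}}}\cdots\mathbb{A}_{s_{i_{n}}})_{i,j}\in\{1,\dots,N\}$, where $R(w)$ is the set of vertices from which $w$ is readable. Writing $C_{k,l}([w])$ as a set of labels and grouping presenting paths by their initial and terminal vertices, the Perron--Frobenius asymptotics $\lambda^{-m}\mathbb{A}^{m}\to VU$ (the Ces\`aro average (\ref{eqn:1.7-2}) supplying the periodic case) let me evaluate numerator and denominator up to a common factor $\lambda^{k+l}$. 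After this factor cancels in the ratio, the boundary contributions decouple into the left eigenvector $U$ (from the $k$ free sites before the window) and the right eigenvector $V$ (from the $l$ free sites after it), leaving precisely $\lambda^{-n}\,U(\mathbb{A}_{s_{i_{1}}}\cdots\mathbb{A}_{s_{i_{n}}})V$.

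The main obstacle is that (\ref{eqn:1.7-1}) counts \emph{distinct label words}, whereas the matrix products naturally count \emph{paths}, and a single word may be presented by as many as $N$ paths; thus the over-counting multiplicity $\left|R(\cdot)\right|$ must be shown to cancel between numerator and denominator rather than to distort the limit. I would control this using irreducibility: an irreducible right-resolving presentation admits synchronizing (magic) words $w_{0}$ for which all presenting paths terminate at a single vertex, so that any long admissible word containing an occurrence of $w_{0}$ has a completely determined multiplicity structure to its right. A renewal/regeneration argument then shows that, outside an exponentially negligible set, long words acquire a fixed synchronized multiplicity, so $\left|\mathcal{B}_{m}(X_{\mathcal{G}})\right|$ and the path count $\mathbf{1}^{t}\mathbb{A}^{m}\mathbf{1}$ are asymptotically proportional with the same constant that governs $\left|C_{k,l}([w])\right|$; the multiplicities cancel and the limit is $\nu([w])$. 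Verifying that the synchronized set carries full asymptotic weight, uniformly in the position of the window, is the technical heart of the argument and is where I expect to spend the most effort.

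Finally, the stated properties follow from the identification $\mu_{\mathcal{G}}=\nu=\pi_{*}\mu_{G}$. Since $\pi$ is right-resolving it is finite-to-one, hence entropy preserving, so $h(X_{\mathcal{G}})=h(\Sigma_{G})=\log\lambda$ and $\nu$ attains this maximal value; ergodicity of $\nu$ is inherited from that of $\mu_{G}$ because a factor of an ergodic system is ergodic. For uniqueness I would use that a finite-to-one factor map between irreducible systems puts the measures of maximal entropy in bijection, so the unique maximal measure $\mu_{G}$ on $\Sigma_{G}$ (Theorem \ref{theorem:1.1}) corresponds to a unique maximal measure downstairs, namely $\nu$. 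Because $\mu_{G}$ is a Markov measure on the edges and $\pi$ merely replaces each edge by its label, $\nu=\pi_{*}\mu_{G}$ is by construction the hidden Markov measure of the underlying shift of finite type $\Sigma_{G}$, completing the proof.
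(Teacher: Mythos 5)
Your overall strategy is sound and genuinely different from the paper's in its two key steps. For the identification of the limit, the paper does not argue via synchronizing words at all: it introduces the subset-construction matrices $\widetilde{\mathcal{A}}_{k,l}$ and proves an exact inclusion--exclusion formula $\left|\mathcal{B}_{n}\right|=\sum_{k=1}^{N}(-1)^{k+1}\sum_{1\leq l\leq k}\left|\widehat{\mathbb{A}}_{n;k,l}\right|$ (Lemma 4.2, using $\sum_{k}(-1)^{k+1}\binom{j}{k}=1$ to resolve the word-versus-path multiplicity exactly), and then a spectral-gap lemma (Lemma 4.3) asserting that for a \emph{minimal} right-resolving presentation the Perron value of $\mathbb{A}=\widetilde{\mathbb{A}}_{1,1}$ strictly dominates those of all higher-level matrices $\widetilde{\mathbb{A}}_{j,j}$; the correction terms are therefore exponentially negligible and both numerator and denominator have the same leading asymptotics as the raw path counts. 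For uniqueness, the paper adapts Walters' Parry-measure argument using a uniform distribution estimate $\alpha\lambda^{-n}\leq\mu([w])\leq\beta\lambda^{-n}$ (Lemmas 4.7--4.8), whereas you invoke lifting of measures through a finite-to-one factor map; your route is cleaner and correct (finite-to-one codes preserve measure-theoretic entropy, and any maximal measure downstairs lifts to one upstairs, which must be the Parry measure). Your computation of $\pi_{*}\mu_{G}$ and the identification with the hidden Markov measure match the paper's Corollary 4.6 and Theorem 4.7.

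The genuine gap is exactly where you flag it. Your renewal sketch asserts that typical long words acquire a fixed multiplicity $c$ which cancels between numerator and denominator, but you do not establish this, and as stated the mechanism is not quite right: a synchronizing factor forces all presenting paths to merge, but it does not force the multiplicity (the number of admissible starting vertices) to be $1$ or even constant, so you still need to show either that the multiplicity-$\geq 2$ words have strictly smaller exponential growth rate, or that a single proportionality constant governs both counts uniformly in the window position. The first of these is precisely the content of the paper's Lemma 4.3, whose proof is not soft: it uses the uniqueness of the minimal right-resolving presentation of an irreducible sofic shift to derive a contradiction when some $\widetilde{\mathbb{A}}_{j,j}$, $j\geq 2$, attains the top Perron value. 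Note also that minimality is genuinely needed for your synchronizing-word approach: an irreducible right-resolving but non-minimal presentation may have no synchronizing words at all (e.g., a uniformly two-to-one presentation of the full $2$-shift), so, like the paper, you should first reduce to the Fischer cover and then transfer the conclusion back via the uniqueness of the maximal measure.
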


When $\mathcal{G}=(G,\mathcal{L})$ is a shift of finite type and $A$ is the adjacency matrix of $G$, then (\ref{eqn:1.8}) is reduced to (\ref{eqn:1.6-2}).

The previous results can be generalized further to shift space $\Sigma_{T}$, where $T$ is a countably infinite zero-one matrix. Indeed, let $\mathcal{I}$ of countably infinite symbols be the index set of $T$. Denote by $\mathcal{B}_{n;i,j}(\Sigma(T))$ the admissible patterns with initial state $i$ and terminal state $j$:

\begin{equation*}
\mathcal{B}_{n;i,j}(\Sigma(T))=\left\{[i_{1},i_{2},\cdots, i_{n}]\in \mathcal{B}_{n}(\Sigma(T)) \hspace{0.1cm}\mid\hspace{0.1cm} i_{1}=i \text{ and } i_{n}=j \right\}
\end{equation*}
and

\begin{equation*}
\begin{array}{rl}
 & C_{k,l;i,j}([i_{1},i_{2},\cdots, i_{n}]) \\
 & \\
  = & \left\{ x=(x_{-k+1},\cdots,x_{-1},x_{0},x_{1},\cdots,x_{n+l} )\in  \mathcal{B} _{n+k+l}(\Sigma(T))\hspace{0.1cm}\mid\hspace{0.1cm}x_{-k+1}=i, x_{n+l}=j, x_{q}=i_{q}, 1\leq q\leq n \right\}.
\end{array}
\end{equation*}
Define

\begin{equation}\label{eqn:1.8-0}
\begin{array}{rl}
 & \mu_{i,j}([i_{1},i_{2},\cdots, i_{n}]) \\
 & \\
=  & \underset{k,l\rightarrow\infty}{\lim} \left|C_{k,l;i,j}([i_{1},i_{2},\cdots, i_{n}]) \right|/ \left|\mathcal{B}_{n+k+l;i,j}(\Sigma(T)) \right|.
\end{array}
\end{equation}

Recall the Perron-Frobenius Theorem for countable state $T$. Suppose that $T$ is irreducible, aperiodic and recurrent. Then, the Perron value

\begin{equation}\label{eqn:1.8-1}
\lambda=\underset{n\rightarrow\infty}{\lim}\sqrt[n]{(T^{n})_{i,j}}
\end{equation}
for any index $i$ and $j$. Assume $\lambda$ is finite. Let $\mathbf{l}$ and $\mathbf{r}$ be the left and right eigenvectors of $\lambda$. $T$ is called positive recurrent if

\begin{equation}\label{eqn:1.8-2}
\mathbf{l}\cdot\mathbf{r}<\infty .
\end{equation}
The following result is thus obtained.

\begin{thm}
\label{theorem:1.2}
Let $T$ be irreducible, aperiodic and positive recurrent. Then, the natural measure exists and equals

\begin{equation}\label{eqn:1.8-3}
\begin{array}{rl}
\mu([i_{1},i_{2},\cdots ,i_{n}])= & \mu_{i,j}([i_{1},i_{2},\cdots ,i_{n}])\\
& \\
= & \frac{l_{i_{1}}r_{i_{n}}}{\lambda^{n-1}}T_{i_{1},i_{2}}T_{i_{2},i_{3}}\cdots T_{i_{n-1},i_{n}},
\end{array}
\end{equation}
which is independent of $i$ and $j$, where the left and right eigenvectors $\mathbf{l}=\left(l_{j}\right)$ and $\mathbf{r}=\left(r_{j}\right)$ are normalized such that $\mathbf{l}\cdot\mathbf{r}=1$. Furthermore, $\mu$ has maximal entropy of $\log \lambda$.
\end{thm}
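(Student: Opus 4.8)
The plan is to reduce the computation of the natural measure to the asymptotics of the matrix powers $T^{m}$ and then to feed in the Perron--Frobenius theory for countable, irreducible, aperiodic, positive recurrent matrices recalled in (\ref{eqn:1.8-1})--(\ref{eqn:1.8-2}). First I would carry out the combinatorial bookkeeping. A pattern counted by $C_{k,l;i,j}([i_{1},\dots,i_{n}])$ decomposes into three independent blocks: an admissible path of $k$ edges from the fixed initial symbol $i$ at site $-k+1$ to the symbol $i_{1}$ at site $1$; the prescribed central block $[i_{1},\dots,i_{n}]$; and an admissible path of $l$ edges from $i_{n}$ at site $n$ to the fixed terminal symbol $j$ at site $n+l$. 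Since $T$ is a zero--one matrix, the central block is admissible iff $T_{i_{1},i_{2}}\cdots T_{i_{n-1},i_{n}}=1$, and the numbers of left and right completions are exactly $(T^{k})_{i,i_{1}}$ and $(T^{l})_{i_{n},j}$. Hence
\begin{equation*}
\left|C_{k,l;i,j}([i_{1},\dots,i_{n}])\right|=(T^{k})_{i,i_{1}}\,\Big(\prod_{q=1}^{n-1}T_{i_{q},i_{q+1}}\Big)\,(T^{l})_{i_{n},j},
\end{equation*}
while the denominator counts all admissible patterns of length $n+k+l$ from $i$ to $j$, namely $\left|\mathcal{B}_{n+k+l;i,j}(\Sigma(T))\right|=(T^{n+k+l-1})_{i,j}$.

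The analytic heart of the argument, and the step I expect to be the main obstacle, is the uniform ratio asymptotic for matrix powers. In the finite cases the spectral gap supplied by Perron--Frobenius gives $(A^{m})_{i,j}\sim\lambda^{m}u_{i}v_{j}$ immediately; for a countable index set no spectral gap is available, so one must instead invoke the Vere-Jones--Seneta theory. Positive recurrence, i.e. the condition $\mathbf{l}\cdot\mathbf{r}<\infty$ in (\ref{eqn:1.8-2}), is precisely what upgrades the mere existence of the root limit (\ref{eqn:1.8-1}) to the prefactor limit
\begin{equation*}
\lim_{m\to\infty}\frac{(T^{m})_{i,j}}{\lambda^{m}}=\frac{r_{i}\,l_{j}}{\mathbf{l}\cdot\mathbf{r}}=r_{i}\,l_{j},
\end{equation*}
the last equality using the normalization $\mathbf{l}\cdot\mathbf{r}=1$; in the null recurrent case this limit would vanish and the construction would collapse. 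Aperiodicity guarantees a genuine limit rather than only a Ces\`aro average (contrast (\ref{eqn:1.7})). Substituting the three factorized asymptotics and letting $k,l\to\infty$, every factor $r_{i}$, $l_{j}$ and every power of $\lambda$ carried by the initial and terminal data cancels, leaving
\begin{equation*}
\mu_{i,j}([i_{1},\dots,i_{n}])=\frac{l_{i_{1}}r_{i_{n}}}{\lambda^{n-1}}\,T_{i_{1},i_{2}}\cdots T_{i_{n-1},i_{n}},
\end{equation*}
manifestly independent of $i$ and $j$; this is (\ref{eqn:1.8-3}).

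Next I would verify that (\ref{eqn:1.8-3}) defines an honest shift-invariant probability measure. Rewriting $\mu([i_{1},\dots,i_{n}])=\pi_{i_{1}}\prod_{q=1}^{n-1}P_{i_{q},i_{q+1}}$ with $\pi_{i}=l_{i}r_{i}$ and $P_{i,j}=T_{i,j}r_{j}/(\lambda r_{i})$ exhibits $\mu$ as a stationary Markov measure: the eigenvector relations $\sum_{j}T_{i,j}r_{j}=\lambda r_{i}$ and $\sum_{i}l_{i}T_{i,j}=\lambda l_{j}$ make $P$ stochastic and $\pi$ $P$-invariant, and $\sum_{i}\pi_{i}=\mathbf{l}\cdot\mathbf{r}=1$. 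Thus the Kolmogorov consistency relations hold automatically, and the Carath\'eodory extension produces $\mu$ on $\mathfrak{B}(\Sigma(T))$.

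Finally I would compute the entropy as $h(\mu)=-\sum_{i,j}\pi_{i}P_{i,j}\log P_{i,j}$. On the support one has $\log P_{i,j}=\log r_{j}-\log\lambda-\log r_{i}$; using $\sum_{i}l_{i}T_{i,j}=\lambda l_{j}$, $\sum_{j}T_{i,j}r_{j}=\lambda r_{i}$, and $\sum_{i,j}l_{i}T_{i,j}r_{j}=\lambda$, the $\log\lambda$ term contributes $\log\lambda$ while the two $\log r$ contributions telescope to $-\sum_{j}l_{j}r_{j}\log r_{j}+\sum_{i}l_{i}r_{i}\log r_{i}=0$, leaving $h(\mu)=\log\lambda$, which is the Gurevich entropy of $\Sigma(T)$. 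The one point demanding care here is the legitimacy of separating and interchanging the infinite sums that carry the $\log r$ terms, which I would justify from positive recurrence and the summability of $\pi_{i}=l_{i}r_{i}$.
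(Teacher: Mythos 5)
Your derivation of the formula (\ref{eqn:1.8-3}) is exactly the paper's argument (Theorem 5.3): the same factorization $\left|C_{k,l;i,j}\right|=(T^{k})_{i,i_{1}}\,T_{i_{1},i_{2}}\cdots T_{i_{n-1},i_{n}}\,(T^{l})_{i_{n},j}$ with $\left|\mathcal{B}_{n+k+l;i,j}\right|=(T^{n+k+l-1})_{i,j}$, followed by part (v) of the generalized Perron--Frobenius theorem, $\lim_{m}(T^{m})_{i,j}/\lambda^{m}=r_{i}l_{j}$, which is precisely where positive recurrence and aperiodicity enter. Where you diverge is the entropy claim: the paper simply identifies $\mu$ with the Markov pair $(p,P)$, $p_{i}=l_{i}r_{i}$, and cites Kitchens (Gurevich's theorem, Proposition 7.2.13) for the fact that this is the unique measure of maximal entropy $\log\lambda$; you instead compute $h(\mu)=-\sum_{i,j}\pi_{i}P_{i,j}\log P_{i,j}$ directly. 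Your route is more self-contained, but the caveat you flag at the end is a genuine one and your proposed fix is not quite enough: splitting the sum into the $\log\lambda$ term and the two telescoping $\log r$ terms requires $\sum_{i}l_{i}r_{i}\,\lvert\log r_{i}\rvert<\infty$, and this does \emph{not} follow from positive recurrence ($\sum_{i}l_{i}r_{i}<\infty$) alone, since $r_{i}$ may decay arbitrarily fast while $l_{i}r_{i}$ stays summable. To close this you would either need an extra hypothesis, or fall back on the citation the paper uses. One further small point in your favor: your transition matrix $P_{i,j}=T_{i,j}r_{j}/(\lambda r_{i})$ is the correct stochastic normalization (it is what makes the product telescope to (\ref{eqn:1.8-3})), whereas the paper's display (\ref{eqn:5.3-4}) has the indices inverted, apparently a typo.
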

Equation (\ref{eqn:1.8-3}) is the infinite-dimensional version of the Parry measure. A similar result holds for sofic shifts with countable states; see Theroem 5.5 for details.

Finally, the Krieger cover can be used to reduce some shift spaces to a sofic shift with countably infinite states.
For such shift spaces, under the Krieger cover with countable states, the adjacency matrix $T$ is irreducible, aperiodic and positive recurrent, has the natural measure that is ergodic and has maximal entropy. The well-known context free shift belongs to this case and is studied in detail here.

The finiteness of $\mathbf{l}\cdot\mathbf{r}<\infty$ in (\ref{eqn:1.8-2}) plays an essential role in establishing Theorem 1.3.
When (\ref{eqn:1.8-2}) fails, the cases in which $\mathbb{T}$ is null recurrent or transient, or in which the shift space has uncountable Krieger cover, need separate investigation. Indeed, in the case of random walk on integers, no measure can be obtained from (\ref{eqn:1.8-0}); see Example 5.4.

On the other hand, the study of the pattern generation problems of higher dimensional symbolic dynamical systems have some progress in recent years; see for examples \cite{0-1,0-2,0-3,0-4,101,102-0,0-5,1} and references therein. This work can be extended to higher-dimensional cases.

The rest of this paper is organised as follows. Section 2 reviews some useful results from measure theory and symbolic dynamics. Section 3 studies shifts of finite type and proves Theorem 1.1. Section 4 concerns sofic shifts and proves Theorem 1.2. Section 5 reviews shifts of finite type and sofic shifts with countable states. The existence of a natural measure is proven when the adjacency matrix is irreducible and positive recurrent. Section 6 introduces Krieger cover for general shift spaces and the existence of the natural measure is proven when the associated adjacency matrix is countable, irreducible and positive recurrent.

\section{Preliminaries}
\setcounter{equation}{0}

This section recalls some useful notation and results from measure theory and symbolic dynamics.

\subsection{Measure theory}
\label{sec:2-1}
Let $X$ be a set. A $\sigma$-algebra of a subset of $X$ is a collection $\mathfrak{B}$ of subsets of $X$ such that (i) $X\in\mathfrak{B}$; (ii) if $B\in\mathfrak{B}$ then $X\setminus B\in\mathfrak{B}$; (iii) $\underset{n=1}{\overset{\infty}{\cup}}B_{n}\in \mathfrak{B}$ whenever $B_{n}\in\mathfrak{B}$ for all $n\geq 1$. Given $\sigma$-algebra $\mathfrak{B}$, $(X,\mathfrak{B})$ is called a measurable space.

A function $m$ is a finite measure on $(X,\mathfrak{B})$ if $m$ is a non-negative function on $\mathfrak{B}$ such that $m: \mathfrak{B}\rightarrow[0,\infty)$ satisfies (i) $m(\emptyset)=0$; (ii) $m(\underset{n=1}{\overset{\infty}{\cup}}B_{n})=\underset{n=1}{\overset{\infty}{\sum}}m(B_{n})$ where $B_{i}\cap B_{j}=\emptyset$ for $i\neq j$. $(X, \mathfrak{B},m)$ is a probability space if $m(X)=1$.
In this study, $(X, \mathfrak{B},m)$ is always assumed to be a probability space.

Suppose $(X_{i}, \mathfrak{B}_{i},m_{i})$, $i=1,2$, are probability spaces. A transformation $T:X_{1}\rightarrow X_{2}$ is called measurable if $T^{-1}(B_{2})\in \mathfrak{B}_{1}$ for any $B_{2}\in\mathfrak{B}_{2}$. $T$ is called measure-preserving if

\begin{equation*}
m_{1}(T^{-1}(B_{2}))=m_{2}(B_{2})
\end{equation*}
for all $B_{2}\in\mathfrak{B}_{2}$. If $T: (X,\mathfrak{B},m)\rightarrow (X,\mathfrak{B},m)$ is measure-preserving, then $T$ is called ergodic when $T^{-1}(B)=B$ implies $m(B)=0$ or $m(B)=1$. Furthermore, $T$ is called strong mixing if $\underset{n\rightarrow\infty}{\lim}m\left(T^{-n}(A)\cap B\right)=m(A)m(B)$ for all $A$ and $B$.

Recall the well-known Birkhoff Ergodic Theorem.

\begin{thm}[Birkhoff Ergodic Theorem]
\label{theorem:2.1}
Suppose $T: (X,\mathfrak{B},m)\rightarrow (X,\mathfrak{B},m)$ is a measure-preserving map and $f\in L^{1}(m)$. Then

\begin{equation*}
\frac{1}{n} \underset{i=0}{\overset{n-1}{\sum}}f(T^{i}(x))\rightarrow f^{*} \hspace{0.2cm} \text{a.e.}
\end{equation*}
for some $f^{*}\in L^{1}(m)$, and $f^{*}\circ T=f^{*}$ a.e. and $\int f^{*} dm = \int f dm$.

Furthermore if $T$ is ergodic then $f^{*}$ is a constant a.e. with
\begin{equation*}
f^{*}=\int f dm \hspace{0.2cm} \text{a.e.,}
\end{equation*}
i.e.

\begin{equation}\label{eqn:2.1}
\underset{n\rightarrow\infty}{\lim}\hspace{0.1cm}\underset{i=0}{\overset{n-1}{\sum}}\hspace{0.1cm}f(T^{i}(x))= \int f dm.
\end{equation}
\end{thm}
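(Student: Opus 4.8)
The plan is to reduce the entire statement to a single analytic estimate, the \emph{Maximal Ergodic Theorem}, and then extract pointwise convergence, integrability, and the value of the limit by soft arguments. Throughout I write $S_nf=\sum_{i=0}^{n-1}f\circ T^i$ and $A_nf=\tfrac1n S_nf$. The key lemma I would establish first is: for $f\in L^1(m)$, if $E=\{x\in X:\sup_{k\ge1}S_kf(x)>0\}$, then $\int_E f\,dm\ge0$. I would prove this by Garsia's argument. Set $M_n=\max\{0,S_1f,\dots,S_nf\}\ge0$; for each $1\le k\le n$ one has $S_kf=f+S_{k-1}f\circ T\le f+M_n\circ T$, so on $\{M_n>0\}$ (where $M_n=\max_{1\le k\le n}S_kf$) we obtain $f\ge M_n-M_n\circ T$. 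Integrating over $\{M_n>0\}$, using that $M_n\equiv0$ off this set together with $\int M_n\circ T\,dm=\int M_n\,dm$ (measure preservation), yields $\int_{\{M_n>0\}}f\,dm\ge0$; letting $n\to\infty$ and invoking dominated convergence with dominant $|f|$ gives $\int_E f\,dm\ge0$.

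Next I would deduce almost-everywhere convergence of $A_nf$. Put $\overline f=\limsup_n A_nf$ and $\underline f=\liminf_n A_nf$. Both are $T$-invariant, since $A_nf\circ T=A_nf+\tfrac1n(f\circ T^n-f)$ and $\tfrac1n f\circ T^n\to0$ a.e.: for rational $\varepsilon>0$, $\sum_n m(\{\,|f\circ T^n|>n\varepsilon\,\})=\sum_n m(\{\,|f|>n\varepsilon\,\})\le\varepsilon^{-1}\int|f|\,dm<\infty$, so Borel--Cantelli forces $\limsup_n|f\circ T^n|/n\le\varepsilon$ a.e. For rationals $\beta<\alpha$ consider the $T$-invariant set $E_{\alpha,\beta}=\{\underline f<\beta<\alpha<\overline f\}$ and apply the maximal lemma to $g=(f-\alpha)\mathbf 1_{E_{\alpha,\beta}}$: since $\mathbf 1_{E_{\alpha,\beta}}\circ T=\mathbf 1_{E_{\alpha,\beta}}$ we have $S_kg=\mathbf 1_{E_{\alpha,\beta}}(S_kf-k\alpha)$, and $\overline f>\alpha$ on $E_{\alpha,\beta}$ gives $\{\sup_k S_kg>0\}=E_{\alpha,\beta}$, whence $\int_{E_{\alpha,\beta}}(f-\alpha)\,dm\ge0$. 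Symmetrically, applying the lemma to $(\beta-f)\mathbf 1_{E_{\alpha,\beta}}$ gives $\int_{E_{\alpha,\beta}}(\beta-f)\,dm\ge0$. Together these read $\alpha\,m(E_{\alpha,\beta})\le\int_{E_{\alpha,\beta}}f\,dm\le\beta\,m(E_{\alpha,\beta})$, and $\alpha>\beta$ forces $m(E_{\alpha,\beta})=0$. Taking the countable union over rational pairs shows $\overline f=\underline f$ a.e., so $f^*:=\lim_n A_nf$ exists a.e.\ and satisfies $f^*\circ T=f^*$.

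It then remains to show $f^*\in L^1(m)$ with $\int f^*\,dm=\int f\,dm$. Integrability follows from Fatou's lemma, since $\int|f^*|\,dm\le\liminf_n\int|A_nf|\,dm\le\int|f|\,dm$ by measure preservation. For the value of the integral I would upgrade the convergence to $L^1$ by truncation: split $f=g+h$ with $g$ bounded and $\|h\|_1<\varepsilon$; bounded convergence gives $A_ng\to g^*$ in $L^1$, while $\|A_nh\|_1\le\|h\|_1<\varepsilon$ and $\|h^*\|_1\le\varepsilon$ by Fatou, so $\limsup_n\|A_nf-f^*\|_1\le2\varepsilon$. As $\varepsilon$ is arbitrary, $A_nf\to f^*$ in $L^1$, whence $\int f^*\,dm=\lim_n\int A_nf\,dm=\int f\,dm$. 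Finally, when $T$ is ergodic the invariance $f^*\circ T=f^*$ forces $f^*$ to be constant a.e., and that constant must equal $\int f^*\,dm=\int f\,dm$, which is exactly the asserted formula \eqref{eqn:2.1}.

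The main obstacle is the Maximal Ergodic Theorem; everything downstream is routine once it is in hand. The delicate point in its proof is that the inequality $f\ge M_n-M_n\circ T$ holds \emph{only} on $\{M_n>0\}$, combined with the sign bookkeeping that lets one replace $\int_{\{M_n>0\}}M_n\circ T\,dm$ by $\int_X M_n\,dm$; keeping these domains straight, and handling the exact invariance of $E_{\alpha,\beta}$ modulo null sets, is where care is needed. I expect no trouble from $f\in L^1$ rather than $L^2$, since both the maximal lemma and the truncation step are tailored precisely to the $L^1$ setting.
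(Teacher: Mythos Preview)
Your proof is correct and follows the standard route via Garsia's proof of the Maximal Ergodic Theorem; the bookkeeping with the invariant sets $E_{\alpha,\beta}$, the Borel--Cantelli step for $\tfrac1n f\circ T^n\to0$ a.e., and the $L^1$ upgrade by truncation are all handled properly. Note, however, that the paper does not actually prove this statement: Theorem~\ref{theorem:2.1} is quoted in the Preliminaries section as a classical result (it is simply recalled, with an implicit reference to Walters~\cite{2}), and no proof is supplied. So there is nothing in the paper to compare your argument against; you have supplied a complete proof where the authors chose to cite the literature.
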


Let $(X,\mathfrak{B},m)$ be a probability space and $T: X\rightarrow X$ be ergodic. Then, for any $E\in\mathfrak{B}$,

\begin{equation*}
\frac{1}{n}\left|\left\{ i\in\{0,1,\ldots,n-1\} \hspace{0.1cm}\mid\hspace{0.1cm} T^{i}(x)\in E\}\right\}\right|\rightarrow m(E) \hspace{0.2cm} \text{a.e.}
\end{equation*}
by the ergodic theorem, the orbit of almost every point in $X$ enters $E$ with asymptotic relative frequency $m(E)$.

Given a finite sub-algebra $\mathcal{A}$ of $\mathfrak{B}$, $\mathcal{A}$ forms a  partition $\xi(\mathcal{A})=\{A_1,A_2,\cdots ,A_k\}$ of $(X,\mathfrak{B},m)$. The entropy of $\mathcal{A}$ (or $\xi(\mathcal{A})$) is defined by

\begin{equation*}
H_{m}(\mathcal{A})=H_{m}(\xi(\mathcal{A}))=-\underset{i=1}{\overset{k}{\sum}}m(A_i)\log m(A_i).
\end{equation*}
Suppose $T: X\rightarrow X$ is a measure-preserving transformation of $(X,\mathfrak{B}, m)$. The entropy of $T$ with respect to $\mathcal{A}$ is defined by

\begin{equation*}
h_{m}(T,\mathcal{A})=\underset{n\rightarrow\infty}{\lim}\frac{1}{n}H_{m}\left(\underset{i=0}{\overset{n-1}{\bigvee}} T^{-i}\mathcal{A}\right).
\end{equation*}
The existence of the limit can be demonstrated by using the subadditive of $H_{}\left(\underset{i=0}{\overset{n-1}{\bigvee}} T^{-i}\mathcal{A}\right)$ \cite{2}.
Then, the entropy $h_{m}(T)$ of $T$ is given by $h_{m}(T)=\sup h_{m}(T,\mathcal{A})$, where the supremum is taken over all finite sub-algebras $\mathcal{A}$ of $\mathfrak{B}$.

Now, the variational principle is recalled as follows.

\begin{thm}
\label{theorem:2.3}
Let $T: X\rightarrow X$ be a continuous map of a compact metric space $X$. Then $h(T)=\sup \left\{h_{m}(T) \hspace{0.1cm}\mid \hspace{0.1cm}  m\in M(X,T)\right\}$,
where $h(T)$ is the topological entropy of $T$.
\end{thm}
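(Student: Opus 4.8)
The plan is to prove the two inequalities $\sup_{m} h_m(T) \le h(T)$ and $h(T) \le \sup_{m} h_m(T)$ separately, following the approach of Misiurewicz. For the first (easy) inequality, fix $m \in M(X,T)$ and a finite Borel partition $\xi = \{A_1, \ldots, A_k\}$; it suffices to bound $h_m(T,\xi)$ by $h(T)$. First I would use regularity of $m$ to replace $\xi$ by a partition $\eta = \{B_0, B_1, \ldots, B_k\}$ in which $B_1, \ldots, B_k$ are compact with $B_i \subseteq A_i$ and $B_0 = X \setminus \bigcup_{i\ge 1} B_i$ has $m(B_0)$ arbitrarily small. Since the conditional entropy $H_m(\xi \mid \eta)$ is controlled by $m(B_0)$, one reduces to bounding $h_m(T,\eta)$. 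Thickening the disjoint compact sets $B_i$ to an open cover $\mathcal{U}$ of small multiplicity, a counting argument shows that each atom of $\bigvee_{i=0}^{n-1} T^{-i}\eta$ meets boundedly many members of $\bigvee_{i=0}^{n-1} T^{-i}\mathcal{U}$; taking logarithms, dividing by $n$, and letting $n\to\infty$ yields $h_m(T,\eta) \le h(\mathcal{U}) \le h(T)$. Taking the supremum over $\xi$ and then over $m$ gives the first inequality.

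For the harder direction I would construct invariant measures realizing the topological entropy in the limit. For each $n$ choose a maximal $(n,\epsilon)$-separated set $E_n \subseteq X$, so that $\frac{1}{n}\log|E_n|$ tends, as $n\to\infty$ and then $\epsilon\to 0$, to $h(T)$. Form the atomic measure $\sigma_n = \frac{1}{|E_n|}\sum_{x\in E_n}\delta_x$ and average it along orbits, $\mu_n = \frac{1}{n}\sum_{i=0}^{n-1} T^i_{*}\sigma_n$. By weak-$*$ compactness of $M(X)$ I would pass to a subsequence $\mu_{n_j}\to\mu$; the averaging guarantees that $\mu$ is $T$-invariant, so $\mu\in M(X,T)$. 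Next I would fix a finite partition $\xi$ with $\operatorname{diam}\xi < \epsilon$ whose atoms all have $\mu$-null boundary, so that for every fixed $q$ the map $\nu \mapsto H_\nu(\bigvee_{i=0}^{q-1}T^{-i}\xi)$ is weak-$*$ continuous at $\mu$. Because the points of $E_n$ are $(n,\epsilon)$-separated and the atoms have diameter less than $\epsilon$, distinct points of $E_n$ lie in distinct atoms of $\bigvee_{i=0}^{n-1}T^{-i}\xi$, giving the key identity $H_{\sigma_n}\bigl(\bigvee_{i=0}^{n-1}T^{-i}\xi\bigr) = \log|E_n|$.

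The main obstacle is transferring this estimate from $\sigma_n$ to the limit measure $\mu$. I would fix an integer $q$, write $n = aq + r$, and use concavity of $H_{(\cdot)}$ together with the standard subadditivity of $H_m(\bigvee_{i=0}^{n-1}T^{-i}\xi)$ to pass from the identity above to a lower bound on $\frac{1}{q}H_{\mu_n}(\bigvee_{i=0}^{q-1}T^{-i}\xi)$; the combinatorial step controlling the loss in this averaging is the most delicate part and rests on the convexity inequality for $-x\log x$. Letting $n_j\to\infty$ with $q$ fixed, and invoking the continuity of $H_{(\cdot)}$ at $\mu$, yields $\frac{1}{q}H_\mu(\bigvee_{i=0}^{q-1}T^{-i}\xi) \ge \limsup_{n}\frac{1}{n}\log|E_n|$. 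Letting $q\to\infty$ gives $h_\mu(T,\xi) \ge \limsup_{n}\frac{1}{n}\log|E_n|$, and finally $\epsilon\to 0$ produces an invariant measure whose entropy dominates $h(T)$. Combining the two inequalities completes the proof.
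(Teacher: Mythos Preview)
Your proposal is correct and follows the standard Misiurewicz argument (as presented, e.g., in Walters' book, which this paper cites). However, the paper itself does not prove this statement: Theorem~2.3 appears in the Preliminaries section, introduced with ``the variational principle is recalled as follows,'' and is stated without proof as background material. So there is no paper proof to compare against; your outline simply supplies the classical argument that the authors take for granted.
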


Based on the variational principle, a measure $m$ of $M(X,T)$ is called a measure of maximal entropy if $h_{m}(T)=h(T)$.

\subsection{Shift space}
\label{sec:2-2}
This section introduces some useful notation and results concerning shift space.

Let $\mathcal{S}_{N}=\{1,2,\cdots,N\}$ be the set of $N$ symbols (or alphabets) with $N\geq 2$. The two-sided sequence space is

\begin{equation*}
\Sigma(N)\equiv \mathcal{S}_{N}^{\mathbb{Z}^{1}}=\left\{(x_{n})_{n=-\infty}^{\infty} \hspace{0.1cm}\mid\hspace{0.1cm} x_{n}\in \mathcal{S}_{N} \text{ for all }n\in\mathbb{Z}^{1}\right\}.
\end{equation*}
%

The shift $\sigma$ on $\Sigma(N)$ is $(\sigma x)_{n}=x_{n+1}$ for $x\in\Sigma(N)$. Therefore, $(\Sigma(N),\sigma)$ is the full shift space. As usual, $\Sigma(N)$ is equipped with product topology. Then, $\Sigma(N)$ is a compact topological space by Tychonoff's theorem.

A subshift $\Sigma$ of $\Sigma(N)$ is a closed and shift-invariant subset of $\Sigma(N)$. An important subshift is a shift of finite type, which is defined as follows.

Let $A$ be an $N\times N$ square $0$-$1$ matrix. The subshift $\Sigma(A)$ is defined by

\begin{equation*}
\Sigma(A)\equiv\left\{x\in \Sigma(N)\hspace{0.1cm}\mid\hspace{0.1cm} A_{x_{n},x_{n+1}}=1 \text{ for all }n\in\mathbb{Z}^{1} \right\}.
\end{equation*}
$\Sigma(A)$ is called a shift of finite type.

Given shift space $\Sigma$, let $\Sigma_{n}$ be the set of the admissible patterns with length $n$. Then, the topological entropy $h(\Sigma)$ of $\Sigma$ is defined by

\begin{equation}\label{eqn:2.1-1}
h(\Sigma)= \underset{n\rightarrow\infty}{\lim}\frac{1}{n}\log |\Sigma_{n}|.
\end{equation}
The subadditive of $\log |\Sigma_{n}|$ implies that the limit always exists.

Perron-Frobenius Theory for a non-negative matrix $A$ is required. $A=[a_{i,j}]$ is called irreducible if, for each $i$ and $j$, $n=n(i,j)$ exists such that $(A^{n})_{i,j}>0$. $A$ is called primitive or irreducible and aperiodic if $n$ is independent of $i$ and $j$, and so there exists $n$ such that $A^{n}$ is positive.

\begin{thm}[Perron-Frobenius Theorem]
\label{theorem:2.2}
Let $A=[a_{i,j}]$ be a non-negative $k\times k$ matrix.

\begin{itemize}
\item[(i)] There is a non-negative eigenvalue $\lambda$ such that no eigenvalue of $A$ has absolute value greater than $\lambda$.

\item[(ii)] Corresponding to the eigenvalue $\lambda$, there is a non-negative left eigenvector $u=(u_{1},u_{2},\cdots, u_{k})$ and a non-negative right eigenvector $v=(v_{1},v_{2},\cdots, v_{k})^{t}$.

\item[(iii)] If $A$ is irreducible, then $\lambda$ is a simple eigenvalue and the corresponding eigenvectors are strictly positive (i.e. $u_{i}>0$, $v_{i}>0$ for all $i$).

\item[(iv)] If $A$ is irreducible and aperiodic, then $\lambda$ is strictly larger than the absolute values of other eigenvalues.
\end{itemize}

\end{thm}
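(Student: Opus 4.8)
The plan is to reduce the whole statement to the case of a strictly positive matrix and then pass to limits. For parts (i) and (ii) I would first treat the case in which every entry of $A$ is strictly positive. On the compact convex simplex $\Delta=\{x\geq 0:\sum_i x_i=1\}$ the map $x\mapsto Ax/\|Ax\|_1$ is continuous (the denominator never vanishes, since $A>0$ and $x\neq 0$) and sends $\Delta$ into itself, so Brouwer's fixed point theorem yields $v\in\Delta$ with $Av=\lambda v$ for $\lambda=\|Av\|_1>0$; applying the same argument to $A^{t}$ produces the left eigenvector $u$. For a general non-negative $A$ I would apply this to the positive matrices $A_{\epsilon}=A+\epsilon J$, with $J$ the all-ones matrix, obtain Perron data $(\lambda_{\epsilon},v_{\epsilon},u_{\epsilon})$ with $v_{\epsilon},u_{\epsilon}\in\Delta$, and extract convergent subsequences as $\epsilon\to 0$. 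Compactness of $\Delta$ and continuity of the matrix entries deliver a non-negative limiting eigenvalue $\lambda$ with non-negative eigenvectors, which is (ii). To finish (i), i.e. that $\lambda$ dominates every eigenvalue in modulus, I would use the entrywise bound: if $Az=\mu z$ with $z\neq 0$ and $|z|$ denotes the vector of moduli, then $|\mu|\,|z_i|=|(Az)_i|\leq (A|z|)_i$ for all $i$; combined with the Collatz--Wielandt characterization $\lambda=\max_{0\neq x\geq 0}\min_{i:x_i>0}(Ax)_i/x_i$ realized by the fixed point, this forces $|\mu|\leq\lambda$.

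For (iii) assume $A$ is irreducible. A standard counting argument using the irreducibility hypothesis shows $(I+A)^{k-1}$ is strictly positive. If $Av=\lambda v$ with $v\geq 0$ and $v\neq 0$, then $(I+A)^{k-1}v=(1+\lambda)^{k-1}v$ has all entries positive, so $v>0$, and likewise $u>0$. Simplicity then splits into two claims. First, the geometric eigenspace is one-dimensional: two independent eigenvectors for $\lambda$ could be combined into an eigenvector with a vanishing coordinate, contradicting the strict positivity just established. Second, there is no nontrivial Jordan block: if $(A-\lambda I)w=v$ for some generalized eigenvector $w$, then pairing with the strictly positive left eigenvector gives $0=u(A-\lambda I)w=uv>0$, a contradiction. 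Hence $\lambda$ is a simple eigenvalue.

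For (iv) assume in addition that $A$ is aperiodic, so some power $A^{m}$ is strictly positive and the eigenvalues of $A^{m}$ are the $m$-th powers of those of $A$; it therefore suffices to prove the strict inequality for a strictly positive matrix and transfer it back. So suppose $A>0$ and $Az=\mu z$ with $|\mu|=\lambda$. The bound $\lambda|z|\leq A|z|$ from above, paired with $u>0$, forces $A|z|=\lambda|z|$, so $|z|$ is a Perron eigenvector and hence $|z|>0$. Equality in the triangle inequality $|Az|=A|z|$, with all entries of $A$ strictly positive, then forces the arguments of the $z_j$ to coincide, i.e. $z=c|z|$ for a unimodular constant $c$, whence $\mu=\lambda$. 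This rules out any eigenvalue other than $\lambda$ on the circle of radius $\lambda$.

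The main obstacles are the two parts of (iii) — in particular ruling out a nontrivial Jordan block, where the left-eigenvector pairing $uv>0$ is the decisive device — and the equality analysis in (iv), which is the only place strict positivity (hence aperiodicity) is genuinely used, since it is precisely the positivity of every entry of $A$ that aligns the phases of the eigenvector coordinates.
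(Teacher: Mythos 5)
The paper does not prove Theorem 2.2 at all: it is quoted as the classical Perron--Frobenius Theorem, with the relevant facts taken from the literature (Walters \cite{2}, Seneta \cite{65}), so there is no ``paper's proof'' to compare against. Your blind proof is the standard modern development --- Brouwer's fixed point theorem on the simplex for positive matrices, perturbation $A_{\epsilon}=A+\epsilon J$ and compactness for general non-negative $A$, the pairing $u(A-\lambda I)w=0$ against $uv>0$ to kill a nontrivial Jordan block, and Wielandt's equality-in-the-triangle-inequality argument for (iv) --- and it is essentially correct.

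Two steps deserve tightening. First, in (i) you invoke the Collatz--Wielandt identity $\lambda=\max_{0\neq x\geq 0}\min_{i:x_{i}>0}(Ax)_{i}/x_{i}$, but the fixed point only shows that $\lambda$ is \emph{attained} as a value of $r(x)=\min_{i:x_i>0}(Ax)_i/x_i$, i.e.\ $\lambda\leq\sup r$; the inequality you actually need, $r(|z|)\leq\lambda$, is the other half. It follows, for instance, by pairing $A|z|\geq |\mu|\,|z|$ with the strictly positive left eigenvectors $u_{\epsilon}$ of $A_{\epsilon}$ (giving $|\mu|\,u_{\epsilon}|z|\leq u_{\epsilon}A_{\epsilon}|z|=\lambda_{\epsilon}u_{\epsilon}|z|$ with $u_{\epsilon}|z|>0$) and letting $\epsilon\to 0$; as written the appeal to Collatz--Wielandt is an unproved import of a statement essentially equivalent to the one being proved. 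Second, the reduction in (iv) is incomplete as stated: knowing that $\lambda^{m}$ is the unique eigenvalue of $A^{m}$ of modulus $\lambda^{m}$ only yields $\mu^{m}=\lambda^{m}$, i.e.\ $\mu=\lambda\zeta$ for some $m$-th root of unity $\zeta$, not $\mu=\lambda$. The gap closes in one line either by noting that $A^{m}>0$ for \emph{all} large $m$, so $\mu^{m}=\lambda^{m}$ and $\mu^{m+1}=\lambda^{m+1}$ force $\mu=\lambda$, or by carrying the phase-alignment conclusion $z=c|z|$, $|z|>0$ from the positive matrix $A^{m}$ back to the relation $A|z|=\mu|z|$ for $A$ itself, which forces $\mu\geq 0$ and hence $\mu=\lambda$. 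With those two repairs the argument is complete.
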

The non-negative eigenvalue obtained in the Perron-Frobenius theorem is called the Perron Value.

The following result follows from the Perron-Frobenius theorem, and it is very useful in studying the properties of the natural measure; see Theorem 0.17 of Walters \cite{2}.

\begin{pro}
\label{proposition:2.2-1}
Assume $A$  is irreducible and aperiodic, and $\lambda$ is the Perron value of $A$ with normalized right eigenvector $V=(v_{1},v_{2}, \cdots, v_{N})^{t}$ and left eigenvector $U=(u_{1},u_{2}, \cdots, u_{N})$ with $UV=1$. Then

\begin{equation}\label{eqn:2.1-4}
\underset{k\rightarrow\infty}{\lim}\frac{A^{k}}{\lambda^{k}}=[v_{i}u_{j}].
\end{equation}
If $A$ is irreducible, then
\begin{equation}\label{eqn:2.1-2}
\underset{K\rightarrow \infty}{\lim} \frac{1}{K}\underset{k=0}{\overset{K-1}{\sum}}\left(\frac{A^{k}}{\lambda^{k}}\right)=[v_{i}u_{j}].
\end{equation}

\end{pro}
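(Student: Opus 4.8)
The plan is to reduce both assertions to a single spectral-projection decomposition of $A$ built from the Perron data $\lambda, U, V$. Set $P\equiv VU$, the $N\times N$ rank-one matrix with entries $P_{ij}=v_iu_j$; this is precisely the matrix $[v_iu_j]$ appearing on the right-hand sides of (\ref{eqn:2.1-4}) and (\ref{eqn:2.1-2}). Since $UV=1$, I would first record the elementary identities $P^2=V(UV)U=VU=P$ and, using $AV=\lambda V$ and $UA=\lambda U$, also $AP=\lambda P=PA$. These say that $P$ is an idempotent commuting with $A$ which projects onto the one-dimensional $\lambda$-eigenspace, the eigenvalue $\lambda$ being simple by Theorem~\ref{theorem:2.2}(iii).

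For part (i) I would write $A=\lambda P+Q$ with $Q\equiv A-\lambda P$. The commutation relations give $PQ=QP=0$, so all cross terms drop out of the binomial expansion and $A^k=\lambda^kP+Q^k$ for every $k\geq 1$. Because $P$ is the spectral projection attached to the simple eigenvalue $\lambda$, the eigenvalues of $Q$ are $0$ together with the remaining eigenvalues $\lambda_2,\dots,\lambda_N$ of $A$; when $A$ is aperiodic, Theorem~\ref{theorem:2.2}(iv) forces $|\lambda_j|<\lambda$ for $j\geq 2$, so the spectral radius of $Q/\lambda$ is strictly less than $1$ and hence $\|Q^k\|/\lambda^k\to 0$. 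Dividing $A^k=\lambda^kP+Q^k$ by $\lambda^k$ and letting $k\to\infty$ then yields (\ref{eqn:2.1-4}).

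For part (ii), where $A$ is irreducible but possibly periodic of period $p$, the same decomposition works once the peripheral spectrum is accounted for. Here $\lambda$ is still simple, but on the circle $|z|=\lambda$ it is joined by the simple eigenvalues $\lambda\omega^j$, where $\omega=e^{2\pi i/p}$ and $1\leq j\leq p-1$. Writing $P_j$ for the corresponding mutually annihilating spectral projections with $P_0=P$, and collecting the rest into $Q$ of spectral radius $<\lambda$, I obtain $A^k/\lambda^k=\sum_{j=0}^{p-1}\omega^{jk}P_j+Q^k/\lambda^k$. Cesàro averaging kills every oscillatory contribution, since $\frac1K\sum_{k=0}^{K-1}\omega^{jk}=\frac1K\cdot\frac{1-\omega^{jK}}{1-\omega^j}\to 0$ for $j\neq 0$, while the $j=0$ term averages to $P$ and $\frac1K\sum_{k=0}^{K-1}Q^k/\lambda^k\to 0$ as the Cesàro mean of a null sequence. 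This gives (\ref{eqn:2.1-2}).

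The main obstacle is the input I have used most freely in part (ii): that for an irreducible matrix the eigenvalues of modulus $\lambda$ are exactly the $p$ simple numbers $\lambda\omega^j$. This is the periodic half of Perron–Frobenius theory and is not isolated among the items of Theorem~\ref{theorem:2.2}, so I would either cite it or, to stay self-contained, derive it by reducing to part (i): after relabelling the index set into the $p$ cyclic classes of $A$, the power $A^p$ becomes block-diagonal with primitive diagonal blocks each of Perron value $\lambda^p$, to each of which part (i) applies; assembling the limits of $A^{pm+s}/\lambda^{pm+s}$ over $s=0,\dots,p-1$ and averaging then recovers $[v_iu_j]$. I expect the bookkeeping in identifying the common limit $VU$ across the cyclic blocks to be the only genuinely delicate point.
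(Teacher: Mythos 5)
Your proof is correct. The paper does not actually prove this proposition itself --- it quotes it from Theorem 0.17 of Walters --- but your rank-one idempotent decomposition $A=\lambda P+Q$ with $P=VU$ and $PQ=QP=0$ is precisely the computation the paper carries out via the Jordan form in (3.3)--(3.8) in the proof of Theorem 3.1, and your reduction of the periodic case to the primitive one through the cyclic block structure is the same device the paper uses in (3.14)--(3.15) and Theorem 3.2, so the approach is essentially the same.
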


A non-negative $N\times N$ matrix $P=[p_{i,j}]$ is called stochastic if $\underset{j=1}{\overset{N}{\sum}}p_{i,j}=1$. Let $p=(p_{1},p_{2},\cdots, p_{N})$ be the invariant probability vector for $P$ such that $pP=p$ and $\underset{i=1}{\overset{N}{\sum}}p_{i}=1$. Then, $(p,P)$ induces a probability space $(\Sigma(P), \mathfrak{B}(P), m(P))$ by

\begin{equation}\label{eqn:2.1-3}
m([i_{1},i_{2},\cdots, i_{n}])=p_{i_{1}}P_{i_{1},i_{2}}\cdots P_{i_{n-1},i_{n}}.
\end{equation}
$(p,P)$ is called a Markov shift \cite{2}. Moreover, the shift map $\sigma$ on $(p,P)$ is ergodic.

When $A=[a_{i,j}]$ is an irreducible nonnegative matrix. By the Perron-Frobenius theorem, $A$ induces the stochastic matrix $P=[P_{i,j}]$ with

\begin{equation}\label{eqn:2.2}
P_{i,j}=\frac{a_{i,j}v_{j}}{\lambda v_{i}}.
\end{equation}
Equations (\ref{eqn:2.1-3}) and (\ref{eqn:2.2}) imply

\begin{equation}\label{eqn:2.6}
m([i_{1},i_{2},\cdots, i_{n}])=\frac{u_{i_{1}}v_{i_{n}}}{\lambda^{n-1}}a_{i_{1},i_{2}}a_{i_{2},i_{3}}\cdots a_{i_{n-1},i_{n}}.
\end{equation}

Therefore, for $(\Sigma(A), \mathfrak{B}(A), m(A))$, $m(A)$ that is defined through (\ref{eqn:2.6}) is called the Parry measure, which is the most important probability measure of shift space $\Sigma(A)$ as follows.

\begin{thm}
\label{theorem:2.3}
If $A$ is irreducible, then Parry measure is the only measure with ergodicity and maximal entropy. Furthermore, if $A$ is irreducible and aperiodic, then the Parry measure is strong mixing.
\end{thm}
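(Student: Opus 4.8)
The plan is to realize the Parry measure $m$ as the Markov measure $(p,P)$ with transition matrix $P=[P_{i,j}]$ given by (\ref{eqn:2.2}) and initial distribution $p_{i}=u_{i}v_{i}$, and then to read off each assertion from this Markov structure. First I would check that $p=(u_{i}v_{i})$ is the invariant probability vector, i.e. $pP=p$ and $\sum_{i}u_{i}v_{i}=UV=1$; the identity $pP=p$ is immediate from $UA=\lambda U$, and substituting $P$ and $p$ into (\ref{eqn:2.1-3}) the factors $v_{i_{k}}$ telescope and reproduce (\ref{eqn:2.6}). Since $A$ is irreducible, the eigenvectors are strictly positive by the Perron--Frobenius Theorem (Theorem~\ref{theorem:2.2}(iii)), so $P$ has exactly the same zero pattern as $A$ and is therefore irreducible; ergodicity of $m$ then follows from the ergodicity of the shift on an irreducible Markov chain $(p,P)$, noted after (\ref{eqn:2.1-3}).

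For maximal entropy I would evaluate the measure--theoretic entropy of a Markov measure, $h_{m}(\sigma)=-\sum_{i,j}p_{i}P_{i,j}\log P_{i,j}$. On the support $a_{i,j}=1$ one has $\log P_{i,j}=\log v_{j}-\log\lambda-\log v_{i}$; splitting the sum into three pieces and using $Av=\lambda v$ and $UA=\lambda U$, the $\log v_{j}$ and $\log v_{i}$ contributions cancel and the remaining term is $\log\lambda$, so $h_{m}(\sigma)=\log\lambda$. Separately, $\abs{\mathcal{B}_{n}}=\sum_{i,j}(A^{n-1})_{i,j}$, and Proposition~\ref{proposition:2.2-1} (equation (\ref{eqn:2.1-4}), or its Ces\`aro form (\ref{eqn:2.1-2}) in the merely irreducible case) shows that this grows at exponential rate $\lambda$, so the topological entropy is $h(\sigma)=\lim_{n}\frac{1}{n}\log\abs{\mathcal{B}_{n}}=\log\lambda$. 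By the variational principle, $m$ is thus a measure of maximal entropy.

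The core of the theorem is uniqueness. Let $\nu$ be any ergodic invariant measure with $h_{\nu}(\sigma)=\log\lambda$, put $q_{i}=\nu([i])$ and let $Q_{i,j}=\nu([i,j])/q_{i}$ be the one--step transition matrix of $\nu$. Writing $\alpha$ for the time--zero partition into $1$--cylinders, I would use $h_{\nu}(\sigma)=H_{\nu}(\alpha\mid\bigvee_{k\geq1}\sigma^{k}\alpha)\leq H_{\nu}(\alpha\mid\sigma\alpha)=-\sum_{i}q_{i}\sum_{j}Q_{i,j}\log Q_{i,j}$, then apply the Gibbs inequality row by row, $-\sum_{j}Q_{i,j}\log Q_{i,j}\leq-\sum_{j}Q_{i,j}\log P_{i,j}$, and finally use $qQ=q$ together with $\log P_{i,j}=\log v_{j}-\log\lambda-\log v_{i}$ to collapse the right--hand side to exactly $\log\lambda$. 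This gives $h_{\nu}(\sigma)\leq\log\lambda$, so the hypothesis $h_{\nu}(\sigma)=\log\lambda$ forces equality throughout: the row--wise Gibbs inequalities become equalities, yielding $Q_{i,j}=P_{i,j}$ for every $i$ (all $q_{i}>0$ since $P$ is irreducible), and the conditioning inequality becomes an equality, yielding the Markov property of $\nu$. Hence $\nu$ is the Markov measure with transition matrix $P$ and its unique invariant vector $q=p$, so $\nu=m$.

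The step I expect to be the main obstacle is the implication that equality in $H_{\nu}(\alpha\mid\bigvee_{k\geq1}\sigma^{k}\alpha)=H_{\nu}(\alpha\mid\sigma\alpha)$ forces the Markov property: one must argue that equality in ``conditioning on a finer $\sigma$--algebra does not increase conditional entropy'' means that, given the one--step past, the present symbol is conditionally independent of the remote past. I would treat this through the conditional--expectation characterization of conditional entropy. Finally, for strong mixing when $A$ is irreducible and aperiodic, $P$ is primitive with Perron value $1$, so applying Proposition~\ref{proposition:2.2-1} to $P$ gives $(P^{n})_{i,j}\to p_{j}$; a direct computation with (\ref{eqn:2.1-3}) then yields $m(\sigma^{-n}C\cap C')\to m(C)m(C')$ for any two cylinders $C,C'$, and the general statement follows by approximating arbitrary sets in $\mathfrak{B}(A)$ by finite unions of cylinders.
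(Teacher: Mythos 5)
Your proposal is correct and is essentially the standard argument of Walters (Theorem 8.10 and Theorem 1.19 of \cite{2}), which is exactly what the paper relies on: its entire proof of this statement is the citation ``See Walters \cite{2}.'' Your realization of the Parry measure as the Markov pair $(p,P)$, the entropy computation, the Gibbs-inequality uniqueness argument (including the correctly flagged subtlety that equality in the conditioning inequality forces the Markov property), and the mixing argument via $(P^{n})_{i,j}\rightarrow p_{j}$ all match that reference.
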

See Walters \cite{2}.

\section{Shift of finite type}
\setcounter{equation}{0}
This section investigates shifts of finite type and shows that when $A$ is irreducible, the natural measure exists and equals the Parry measure. The case with irreducible and aperiodic $A$ is considered first.

\begin{thm}
\label{theorem:3.1}
If $A=[a_{i,j}]_{N\times N}$ is irreducible and aperiodic. Then the natural measure that is defined by (\ref{eqn:1.6}) exists and equals the Parry measure.
\end{thm}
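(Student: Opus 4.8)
The plan is to reduce both cardinalities in (\ref{eqn:1.6}) to bilinear forms in powers of $A$ and then pass to the limit using Proposition \ref{proposition:2.2-1}. Write $\mathbf{e}=(1,1,\dots,1)$ for the all-ones row vector. Counting admissible words as walks in the graph of $A$ gives
\begin{equation*}
\left|\mathcal{B}_{m}(\Sigma(A))\right|=\mathbf{e}\,A^{m-1}\mathbf{e}^{t}.
\end{equation*}
For the finite cylinder, a word in $C_{k,l}([i_{1},\dots,i_{n}])$ splits into a walk of length $k$ ending at $i_{1}$, the fixed middle block $[i_{1},\dots,i_{n}]$, and a walk of length $l$ starting at $i_{n}$. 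Since these three pieces share no consecutive pair and their free coordinates are independent, summing over the free coordinates produces the row/column sums $\mathbf{e}A^{k}$ and $A^{l}\mathbf{e}^{t}$, and the count factors as
\begin{equation*}
\left|C_{k,l}([i_{1},\dots,i_{n}])\right|=\left(\mathbf{e}\,A^{k}\right)_{i_{1}}\Big(\prod_{t=1}^{n-1}a_{i_{t},i_{t+1}}\Big)\left(A^{l}\mathbf{e}^{t}\right)_{i_{n}},
\end{equation*}
where the product over $t$ is the indicator that the middle block is admissible.

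Next I would insert the normalising powers of the Perron value $\lambda$ and invoke (\ref{eqn:2.1-4}). Because $A$ is irreducible and aperiodic, $\lambda^{-k}A^{k}\to[v_{i}u_{j}]$, so with $S_{V}=\sum_{m}v_{m}$ and $S_{U}=\sum_{m}u_{m}$ (both finite and positive by Perron--Frobenius),
\begin{equation*}
\frac{\left(\mathbf{e}\,A^{k}\right)_{i_{1}}}{\lambda^{k}}\longrightarrow u_{i_{1}}S_{V},\qquad
\frac{\left(A^{l}\mathbf{e}^{t}\right)_{i_{n}}}{\lambda^{l}}\longrightarrow v_{i_{n}}S_{U},\qquad
\frac{\mathbf{e}\,A^{n+k+l-1}\mathbf{e}^{t}}{\lambda^{n+k+l-1}}\longrightarrow S_{V}S_{U}.
\end{equation*}
The point is that the identity
\begin{equation*}
\frac{\left|C_{k,l}([i_{1},\dots,i_{n}])\right|}{\left|\mathcal{B}_{n+k+l}(\Sigma(A))\right|}
=\lambda^{-(n-1)}\Big(\prod_{t=1}^{n-1}a_{i_{t},i_{t+1}}\Big)\,\frac{\big(\lambda^{-k}\mathbf{e}A^{k}\big)_{i_{1}}\big(\lambda^{-l}A^{l}\mathbf{e}^{t}\big)_{i_{n}}}{\lambda^{-(n+k+l-1)}\mathbf{e}A^{n+k+l-1}\mathbf{e}^{t}}
\end{equation*}
is exact (purely algebraic, since the powers of $\lambda$ telescope to $\lambda^{-(n-1)}$), so the limit can be read off factorwise.

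Taking $k,l\to\infty$, the surviving fraction tends to $u_{i_{1}}v_{i_{n}}$, the factors $S_{V}$ and $S_{U}$ cancelling between numerator and denominator, and therefore
\begin{equation*}
\mu([i_{1},\dots,i_{n}])=\frac{u_{i_{1}}v_{i_{n}}}{\lambda^{n-1}}\,a_{i_{1},i_{2}}a_{i_{2},i_{3}}\cdots a_{i_{n-1},i_{n}},
\end{equation*}
which is exactly the Parry measure (\ref{eqn:2.6}); in particular the limit in (\ref{eqn:1.6}) exists for every admissible block. Since these limiting values coincide with the Parry measure, which is already a $\sigma$-additive probability measure on $(\Sigma(A),\mathfrak{B}(\Sigma(A)))$, the Kolmogorov consistency conditions needed to regard $\mu$ as a genuine measure hold automatically, and no separate extension argument is required.

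The main obstacle I anticipate is the bookkeeping behind the clean factorisation and cancellation: verifying that summing the free coordinates of the two boundary blocks yields the decoupled sums $\mathbf{e}A^{k}$ and $A^{l}\mathbf{e}^{t}$ rather than a coupled quantity, and that the block-admissibility indicator $\prod a_{i_{t},i_{t+1}}$ separates cleanly from both walks. Aperiodicity enters in an essential way precisely here, as it is what licenses the entrywise convergence (\ref{eqn:2.1-4}) of $\lambda^{-k}A^{k}$; for merely irreducible $A$ only the Cesàro average (\ref{eqn:2.1-2}) converges and the raw limit (\ref{eqn:1.6}) can fail to exist, which is why the periodic case is instead handled through the averaged definition (\ref{eqn:1.7}).
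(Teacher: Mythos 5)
Your proposal is correct and follows essentially the same route as the paper: both count $\left|C_{k,l}([i_{1},\dots,i_{n}])\right|$ as (incoming walks into $i_{1}$)$\times$(middle-block indicator)$\times$(outgoing walks from $i_{n}$) and $\left|\mathcal{B}_{n+k+l}\right|$ as $\left|A^{n+k+l-1}\right|$, then normalise by powers of $\lambda$ and pass to the limit. The only cosmetic difference is that you invoke the convergence $\lambda^{-k}A^{k}\to[v_{i}u_{j}]$ of Proposition \ref{proposition:2.2-1} directly, whereas the paper re-derives the same asymptotics from the Jordan form of $A$.
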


\begin{proof}
Let $\lambda_{1}$ be the Perron value of $A$. The Perron-Frobenius theorem implies

\begin{equation}\label{eqn:3.1}
\lambda_{1}>|\lambda_{2}|\geq |\lambda_{3}|\geq\cdots \geq |\lambda_{N}|,
\end{equation}
where $\lambda_{j}$ are eigenvalues of $A$. Let $U=(u_{1},u_{2},\cdots,u_{N})$ and $V=(v_{1},v_{2},\cdots,v_{N})^{t}$ be the left and right eigenvectors of $A$ with respect to $\lambda_{1}$:

\begin{equation}\label{eqn:3.2}
\begin{array}{ccc}
AV=\lambda_{1}V  & \text{and} & UA=\lambda_{1}U.
\end{array}
\end{equation}
$U$ and $V$ are normalized such that $\underset{i=1}{\overset{N}{\sum}}u_{i}v_{i}=1$. Using (\ref{eqn:3.1}), the Jordan form of $A$ is

\begin{equation}\label{eqn:3.3}
H^{-1}AH=J=\left[
\begin{array}{cccc}
\lambda_{1} & 0 & \cdots & 0 \\
 0 & b_{2,2} & \cdots& b_{2,N}\\
 \vdots & \vdots&\ddots& \vdots\\
 0 & b_{N,2}&\cdots &b_{N,N}
\end{array}
\right]_{N\times N},
\end{equation}
where
\begin{equation}\label{eqn:3.4}
\begin{array}{ccc}
H=\left[
\begin{array}{cccc}
v_{1} & c_{1,2} & \cdots&  c_{1,N}\\
v_{2} & c_{2,2} & \cdots & c_{2,N} \\
\vdots &\vdots & \ddots& \vdots\\
v_{N} &  c_{N,2}& \cdots&  c_{N,N}\\
\end{array}
\right]
&
\text{and}
&
H^{-1}=\left[
\begin{array}{cccc}
u_{1} & u_{2}&\cdots & u_{N}\\
 d_{2,1} & d_{2,2} &\cdots & d_{2,N} \\
 d_{3,1}& d_{3,2} & \cdots&  d_{3,N}\\
 \vdots & \vdots & \ddots & \vdots \\
  d_{N,1}&  d_{N,2}&\cdots &  d_{N,N}\\
\end{array}
\right].
\end{array}
\end{equation}
Then, (\ref{eqn:3.1}), (\ref{eqn:3.3}) and (\ref{eqn:3.4}) imply the following results:

\begin{equation}\label{eqn:3.5}
|A^{m}|=\lambda_{1}^{m}\left(\underset{q=1}{\overset{N}{\sum}}v_{q}\right)\left(\underset{q=1}{\overset{N}{\sum}}u_{q}\right)+f_{m}(\lambda_{2},\cdots, \lambda_{N}),
\end{equation}

\begin{equation}\label{eqn:3.6}
\underset{q=1}{\overset{N}{\sum}}(A^{k})_{q,i}=\lambda_{1}^{k}\left(\underset{q=1}{\overset{N}{\sum}}v_{q}\right)u_{i}+g_{k}(\lambda_{2},\cdots, \lambda_{N})
\end{equation}
and
\begin{equation}\label{eqn:3.7}
\underset{q=1}{\overset{N}{\sum}}(A^{l})_{j,q}=\lambda_{1}^{l}\left(\underset{q=1}{\overset{N}{\sum}}u_{q}\right)v_{j}+h_{l}(\lambda_{2},\cdots, \lambda_{N})
\end{equation}
with

\begin{equation}\label{eqn:3.8}
\begin{array}{ccc}
f_{m}/\lambda_{1}^{m}\rightarrow 0, & g_{k}/\lambda_{1}^{k}\rightarrow 0, & h_{l}/\lambda_{1}^{l}\rightarrow 0
\end{array}
\end{equation}
as $m$, $k$, $l$ tend to $\infty$, respectively. Here, $|M|=\underset{i=1}{\overset{N}{\sum}}\underset{j=1}{\overset{N}{\sum}}M_{i,j}$ is the summation of all entries for matrix $M=[M_{i,j}]_{N\times N}$.

For any $n\geq 1$, it is easy to see that
\begin{equation}\label{eqn:3.9}
\begin{array}{rl}
& \left|C_{k,l}([i_{1},i_{2},\cdots, i_{n}])\right|\\
& \\
= & \underset{q=1}{\overset{N}{\sum}} (A^{k})_{q,i_{1}} a_{i_{1},i_{2}}\cdots a_{i_{n-1},i_{n}} \underset{q'=1}{\overset{N}{\sum}} (A^{l})_{i_{n},q'}.
\end{array}
\end{equation}
Hence, (\ref{eqn:3.5})$\sim$(\ref{eqn:3.9}) imply

\begin{equation}\label{eqn:3.10}
\begin{array}{rl}
\mu([i_{1},i_{2},\cdots, i_{n}])= & \underset{k,l\rightarrow\infty}{\lim}\left|C_{k,l}([i_{1},i_{2},\cdots, i_{n}])\right|/|A^{k+l+n}| \\
& \\
= & \frac{u_{i_{1}}v_{i_{n}}}{\lambda_{1}^{n-1}}a_{i_{1},i_{2}}a_{i_{2},i_{3}}\cdots a_{i_{n-1},i_{n}}.
\end{array}
\end{equation}
Therefore, from (\ref{eqn:2.6}) and (\ref{eqn:3.10}), $\mu$ is the Parry measure.

\end{proof}

Now, consider $A$, an $N\times N$ irreducible matrix with period $p\geq 2$. $A$ has $p$ eigenvalues whose absolute value equals $\lambda_{1}$:

\begin{equation*}
\begin{array}{cccccc}
\lambda_{1}, &  \lambda_{2}=\lambda_{1}e^{i 2\pi/p }, &  \cdots, & \lambda_{j}=\lambda_{1}e^{i 2\pi (j-1)/p }, &  \cdots, & \lambda_{p}=\lambda_{1}e^{i 2\pi (p-1)/p };
\end{array}
\end{equation*}
the absolute values of the other eigenvalues $\lambda_{k}$, $p+1\leq k\leq N$, are strictly smaller than $\lambda_{1}$.

By permutation transformation, $A$ can be expressed in the following form:

\begin{equation}\label{eqn:3.14}
 A=\left[
 \begin{array}{ccccc}
 0 & A_{1,2} & 0 & \cdots & 0 \\
  0 & 0 & A_{2,3} & \cdots & 0 \\
   \vdots & \vdots & \ddots & \ddots & \vdots \\
    0 & 0 & 0 & \cdots & A _{p-1,p} \\
     A_{p,1} & 0& 0 & \cdots & 0 \\
 \end{array}
 \right]
\end{equation}
and
\begin{equation}\label{eqn:3.15}
 A^{p}=\left[
 \begin{array}{ccccc}
 B_{1} & 0  & 0 & \cdots & 0 \\
  0 &  B_{2} & 0 & \cdots & 0 \\
   0 & 0 &  B_{3}& \cdots & 0 \\
 \vdots & \vdots & \ddots & \ddots & \vdots \\
    0 & 0& 0 & \cdots &  B_{p} \\
 \end{array}
 \right],
\end{equation}
where $B_{i}=A_{i,i+1}A_{i+1,i+2}\cdots A_{p-1,p} A_{p,1} \cdots A_{i-1,i}$ is irreducible and aperiodic, $1\leq i\leq p$.

For $1\leq i\leq p$, let $n_{i}$ be the size of $B_{i}$ and

\begin{equation}\label{eqn:3.16}
\mathcal{D}_{i}=\left\{1\leq j\leq N \hspace{0.1cm}\mid\hspace{0.1cm} \left(\underset{k=1}{\overset{i-1}{\sum}}n_{k}\right)+1\leq j\leq \underset{k=1}{\overset{i}{\sum}}n_{k} \right\}.
\end{equation}

From the Perron-Frobenius Theorem and the form (\ref{eqn:3.14}), the following theorem can be easily proven. The proof is omitted here.

\begin{thm}
\label{theorem:3.2}
Let $A$ be a irreducible matrix with period $p$ of the form (\ref{eqn:3.14}), $p\geq 2$, and $V=(v_{1},v_{2},\cdots,v_{N})^{t}$ be the positive right eigenvector of the Perron value $\lambda_{1}$. Then, the right eigenvector $V^{(j)}=\left(v^{(j)}_{1},v^{(j)}_{2},\cdots,v^{(j)}_{N}\right)^{t}$, $2\leq j\leq p$, corresponding to $\lambda_{j}=\lambda_{1}e^{i 2\pi (j-1)/p }$ is given by
\begin{equation}\label{eqn:3.16}
v^{(j)}_{k}= e^{i 2\pi (j-1)(b-1) /p }v_{k} \hspace{0.3cm}\text{ if }\hspace{0.3cm} k\in \mathcal{D}_{b}.
\end{equation}
On the other hand, let  $U=(u_{1},u_{2},\cdots,u_{N})$ be the positive left eigenvector of $\lambda_{1}$.
Then, the left eigenvector $U^{(j)}=\left(u^{(j)}_{1},u^{(j)}_{2},\cdots,u^{(j)}_{N}\right)$, $2\leq j\leq p$, corresponding to $\lambda_{j}$ is given by
\begin{equation}\label{eqn:3.17}
u^{(j)}_{k}= e^{i 2\pi (j-1)(-b+1) /p }u_{k} \hspace{0.3cm}\text{ if }\hspace{0.3cm} k\in \mathcal{D}_{b}.
\end{equation}

\end{thm}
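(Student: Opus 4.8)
The plan is to avoid computing the eigenvectors from scratch and instead exploit the cyclic block structure of $A$, verifying directly that the proposed vectors satisfy the eigenvector equations. First I would partition coordinates according to the index sets $\mathcal{D}_1,\dots,\mathcal{D}_p$, writing any right vector as $V=(V_1,\dots,V_p)^t$ where $V_b$ is the restriction of $V$ to the coordinates in $\mathcal{D}_b$. In this block notation the form (\ref{eqn:3.14}) shows that $A$ acts by $(AV)_b=A_{b,b+1}V_{b+1}$ for $1\leq b\leq p-1$ and $(AV)_p=A_{p,1}V_1$, so that the Perron relation $AV=\lambda_1 V$ is equivalent to
\begin{equation*}
A_{b,b+1}V_{b+1}=\lambda_1 V_b\quad(1\leq b\leq p-1),\qquad A_{p,1}V_1=\lambda_1 V_p.
\end{equation*}
A symmetric computation shows that $UA=\lambda_1 U$, written in row-block form $U=(U_1,\dots,U_p)$, is equivalent to $U_{c-1}A_{c-1,c}=\lambda_1 U_c$ for $2\leq c\leq p$ together with the wrap-around relation $U_pA_{p,1}=\lambda_1 U_1$.

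Next, setting $\omega=e^{i2\pi(j-1)/p}$, the proposed right eigenvector (\ref{eqn:3.16}) reads $V^{(j)}_b=\omega^{\,b-1}V_b$ in block form. Substituting into $A$, for $1\leq b\leq p-1$ one obtains $(AV^{(j)})_b=A_{b,b+1}\omega^{\,b}V_{b+1}=\omega^{\,b}\lambda_1 V_b$, which equals $\lambda_j V^{(j)}_b=\lambda_1\omega\cdot\omega^{\,b-1}V_b$ exactly. The only point requiring care is the wrap-around block $b=p$: there $(AV^{(j)})_p=A_{p,1}V_1=\lambda_1 V_p$, and since $\omega^{\,p}=1$ this matches $\lambda_j V^{(j)}_p=\lambda_1\omega^{\,p}V_p=\lambda_1 V_p$. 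Hence $AV^{(j)}=\lambda_j V^{(j)}$.

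The left eigenvector is handled in the same way with the opposite phase. Writing the candidate (\ref{eqn:3.17}) as $U^{(j)}_b=\omega^{\,1-b}U_b$ and substituting gives $(U^{(j)}A)_c=\omega^{\,2-c}\lambda_1 U_c=\lambda_j U^{(j)}_c$ for $2\leq c\leq p$, while the wrap-around block closes because $\omega^{\,1-p}=\omega$, so $(U^{(j)}A)_1=\omega^{\,1-p}\lambda_1 U_1=\lambda_1\omega U_1=\lambda_j U^{(j)}_1$. Finally, each $V^{(j)}$ is automatically nonzero since $\lvert v^{(j)}_k\rvert=v_k>0$ (and likewise for $U^{(j)}$), and the $p$ values $\lambda_j=\lambda_1 e^{i2\pi(j-1)/p}$ are distinct, so by the Perron--Frobenius Theorem each is a simple eigenvalue with a one-dimensional eigenspace; thus the vectors constructed are, up to scaling, the unique eigenvectors for $\lambda_j$. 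The verification is pure bookkeeping, and there is no genuine obstacle: the only care needed is in tracking the cyclic wrap-around from block $p$ back to block $1$ and keeping straight the sign convention that distinguishes the right phase $\omega^{\,b-1}$ from the left phase $\omega^{\,1-b}$.
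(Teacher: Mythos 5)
Your verification is correct: the block relations $A_{b,b+1}V_{b+1}=\lambda_1 V_b$, $A_{p,1}V_1=\lambda_1 V_p$ (and their left-hand analogues) combined with the phases $\omega^{b-1}$ and $\omega^{1-b}$ do yield $AV^{(j)}=\lambda_j V^{(j)}$ and $U^{(j)}A=\lambda_j U^{(j)}$, including the wrap-around blocks. The paper omits its proof but indicates exactly this route (direct verification from the cyclic form (\ref{eqn:3.14}) together with the Perron--Frobenius Theorem), so your argument is the intended one.
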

Therefore, by (\ref{eqn:1.7}), the natural measure of matrix $A$, which is irreducible and periodic with period $p\geq 2$, can be shown to equal the Parry measure.

From now on, for clarity of notation, denote by $f(n)\sim g(n)$ if
\begin{equation*}
\underset{n\rightarrow\infty}{\lim}\frac{f(n)}{g(n)}=1.
\end{equation*}

\begin{thm}
\label{theorem:3.1}
If $A=[a_{i,j}]_{ N\times N}$ is irreducible, then the natural measure that is defined by (\ref{eqn:1.7}) exists and equals the Parry measure.
\end{thm}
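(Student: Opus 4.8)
The plan is to carry the computation of the aperiodic case (the preceding Theorem~3.1) as far as it will go and to use the $p$-fold average in (\ref{eqn:1.7}) precisely to cancel the oscillation that otherwise destroys the plain limit. The counting identity (\ref{eqn:3.9}),
\[
\left|C_{k,l}([i_1,\dots,i_n])\right| = \Big(\sum_{q} (A^k)_{q,i_1}\Big)\, a_{i_1,i_2}\cdots a_{i_{n-1},i_n}\,\Big(\sum_{q'} (A^l)_{i_n,q'}\Big),
\]
is purely combinatorial and survives when $A$ is merely irreducible, and likewise $|\mathcal B_{n+k+l}(\Sigma)| = |A^{n+k+l-1}|$. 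The new difficulty, which is the main obstacle, is that $A$ now carries $p$ eigenvalues $\lambda_a=\lambda_1\omega^{a-1}$ with $\omega=e^{2\pi i/p}$ of maximal modulus, so $A^k/\lambda_1^k$ fails to converge and each of the three factors above oscillates with the relevant exponent modulo $p$.

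First I would record the spectral decomposition. Since $A$ is irreducible each $\lambda_a$ is simple, so with the eigenvectors $U^{(a)},V^{(a)}$ supplied by Theorem~\ref{theorem:3.2} (normalised by $U^{(a)}V^{(a)}=1$),
\[
A^m = \sum_{a=1}^{p} \lambda_a^{\,m}\, V^{(a)}U^{(a)} + R_m, \qquad \|R_m\| = o(\lambda_1^{m}),
\]
because the remaining eigenvalues have modulus strictly below $\lambda_1$. Substituting this into $\sum_q (A^{k-j})_{q,i_1}$ and $\sum_{q'}(A^{l+j})_{i_n,q'}$ and forming the product in (\ref{eqn:1.7}) produces, at leading order, a double sum in which the $(a,b)$ term carries the factor $\lambda_a^{k-j}\lambda_b^{l+j}=\lambda_1^{k+l}\omega^{(a-1)k+(b-1)l}\omega^{j(b-a)}$. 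Averaging over $j=0,\dots,p-1$ gives $\tfrac1p\sum_{j=0}^{p-1}\omega^{j(b-a)}=\delta_{a,b}$, so every off-diagonal cross term is annihilated and only the $p$ diagonal terms $a=b$ (for which the dependence on $j$ disappears) survive. This is exactly the role the average in (\ref{eqn:1.7}) was designed to play.

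Next I would simplify the surviving diagonal terms using the explicit phases of Theorem~\ref{theorem:3.2}. Writing $i_1\in\mathcal D_{b_1}$ and $i_n\in\mathcal D_{b_n}$, those formulas give $u^{(a)}_{i_1}v^{(a)}_{i_n}=\omega^{(a-1)(b_n-b_1)}u_{i_1}v_{i_n}$; and since $[i_1,\dots,i_n]$ is admissible in a period-$p$ graph the block index advances by one per symbol, so $b_n-b_1\equiv n-1 \pmod p$ and hence $u^{(a)}_{i_1}v^{(a)}_{i_n}=\omega^{(a-1)(n-1)}u_{i_1}v_{i_n}$. With $\widehat V_a=\sum_q v^{(a)}_q$, $\widehat U_a=\sum_{q'}u^{(a)}_{q'}$ and $S_m:=\sum_{a=1}^{p}\omega^{(a-1)m}\widehat V_a\widehat U_a$, the averaged numerator becomes
\[
\tfrac1p\sum_{j=0}^{p-1}\left|C_{k-j,l+j}\right| = \lambda_1^{k+l}\,u_{i_1}v_{i_n}\,a_{i_1,i_2}\cdots a_{i_{n-1},i_n}\, S_{k+l+n-1} + o(\lambda_1^{k+l}),
\]
while the same decomposition applied to the denominator gives $|\mathcal B_{n+k+l}(\Sigma)| = \lambda_1^{n+k+l-1}S_{k+l+n-1}+o(\lambda_1^{n+k+l-1})$ with the \emph{identical} oscillating factor $S_{k+l+n-1}$.

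Finally the two oscillating factors cancel, and the only point needing care is that $S_m$ stays bounded away from $0$ so the quotient has a limit. Here $S_m$ takes only the $p$ values $S^{(r)}=\lim_{m\to\infty,\,m\equiv r}|A^m|/\lambda_1^m$ according to $r=m\bmod p$, and each is strictly positive since irreducibility forces $|A^m|>0$ with $|A^m|\asymp\lambda_1^m$ on every residue class; thus $\liminf_m S_m>0$. Cancelling $S_{k+l+n-1}$ and the factor $\lambda_1^{k+l}$ leaves
\[
\mu([i_1,\dots,i_n]) = \frac{u_{i_1}v_{i_n}}{\lambda_1^{\,n-1}}\,a_{i_1,i_2}\cdots a_{i_{n-1},i_n},
\]
the Parry measure (\ref{eqn:2.6}), the limit being independent of the manner in which $k,l\to\infty$. (Equivalently one could route the cancellation through the Ces\`aro statement (\ref{eqn:2.1-2}) of Proposition~\ref{proposition:2.2-1}.) The hardest part is the bookkeeping that aligns the surviving diagonal phase $\omega^{(a-1)(k+l+n-1)}$ of the numerator with that of the denominator, which is precisely what makes the \emph{averaged} ratio converge where the plain ratio does not.
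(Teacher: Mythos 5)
Your proposal is correct and follows essentially the same route as the paper: both rest on the explicit peripheral eigenvectors of Theorem~3.2, the spectral asymptotics of $A^m$, and the observation that the $p$-fold average leaves numerator and denominator with the \emph{same} oscillating factor, which then cancels. The paper simply states the two resulting asymptotic formulas and omits the details; your roots-of-unity orthogonality computation $\tfrac1p\sum_{j=0}^{p-1}\omega^{j(b-a)}=\delta_{a,b}$ and the phase bookkeeping $b_n-b_1\equiv n-1\pmod p$ are exactly the omitted steps, carried out correctly.
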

%
%

\begin{proof}
Without loss of generality, assume that $A$ is of the form (\ref{eqn:3.14}).
It is easy to verify that if $n+k+l-1\equiv r$ (mod $p$), by Theorem 3.2, then
\begin{equation*}
\begin{array}{rl}
\left|\mathcal{B}_{n+k+l}(\Sigma) \right|=\left|A^{n+k+l-1} \right|\sim &  \underset{q=1}{\overset{p}{\sum}} \left(\underset{i=1}{\overset{n}{\sum}}v^{(q)}_{i}\right)\left(\underset{i=1}{\overset{n}{\sum}}u^{(q)}_{i}\right)\lambda_{q}^{n+k+l-1} \\
& \\
= & p\left[ \underset{q=1}{\overset{p}{\sum}} \left( \underset{i\in\mathcal{D}_{q}}{\sum}v_{i} \right)  \left( \underset{j\in\mathcal{D}_{q'}}{\sum}u_{j}\right)  \right]\lambda_{1}^{n+l+k-1},
\end{array}
\end{equation*}
where
\begin{equation*}
q'=\left\{
\begin{array}{ll}
q+r & \text{ if }1\leq q+r\leq p,\\
& \\
q+r-p & \text{ if }q+r> p.
\end{array}
\right.
\end{equation*}

Clearly, $[i_{1},i_{2},\cdots,i_{n}]$ is an admissible pattern if and only if $a_{i_{1},i_{2}}\cdots a_{i_{n-1},i_{n}}=1$. From Theorem 3.2, it can be proven that for any admissible pattern $[i_{1},i_{2},\cdots,i_{n}]$,

\begin{equation*}
\begin{array}{rl}
 & \underset{j=0}{\overset{p-1}{\sum}} \left|C_{k-j,l+j}([i_{1},i_{2},\cdots, i_{n}])\right| \\ & \\= &   \underset{j=0}{\overset{p-1}{\sum}} \left(\underset{q=1}{\overset{N}{\sum}} (A^{k-j})_{q,i_{1}}  \right)\left(\underset{q=1}{\overset{N}{\sum}} (A^{l+j})_{i_{n},q}  \right) \\
 & \\
 \sim & p^{2} \left( \underset{q=1}{\overset{p}{\sum}} \left( \underset{i\in\mathcal{D}_{q}}{\sum}v_{i} \right)  \left( \underset{j\in\mathcal{D}_{q'}}{\sum}u_{j}\right) \right) u_{i_{1}}v_{i_{n}}\lambda_{1}^{k+l}.
\end{array}
\end{equation*}
For brevity, the details are omitted.

Therefore,
\begin{equation*}
\underset{k,l\rightarrow\infty}{\lim} \hspace{0.15cm} \frac{1}{p}\underset{j=0}{\overset{p-1}{\sum}}\frac{\left|C_{k-j,l+j}([i_{1},i_{2},\cdots, i_{n}])\right|}{\left|\mathcal{B}_{n+k+l}(\Sigma) \right|}=\frac{u_{i_{1}}v_{i_{n}}}{\lambda_{1}^{n-1}}a_{i_{1},i_{2}}a_{i_{2},i_{3}}\cdots a_{i_{n-1},i_{n}}.
\end{equation*}
By (\ref{eqn:1.7}), the result follows.
\end{proof}
The following example illustrates Theorems 3.2 and 3.3.

\begin{ex}
Consider $H=\left[\begin{array}{cccc}
0 & 1 & 1 & 1 \\
1 & 0 & 0 & 0 \\
1 & 0 & 0 & 0 \\
1 & 0 & 0 & 0 \\
\end{array}
\right]$. Clearly, $H$ is irreducible with period $2$.

The eigenvalues are $\lambda_{1}=\sqrt{3}$, $\lambda_{2}=-\sqrt{3}$, $\lambda_{3}=\lambda_{4}=0$. The right eigenvector corresponding to $\lambda_{1}$ is $V=\frac{1}{\sqrt{6}}(\sqrt{3},1,1,1)^{t}$, and the right eigenvector corresponding to $\lambda_{2}$ is $V^{(2)}=\frac{1}{\sqrt{6}}(\sqrt{3},-1,-1,-1)^{t}$. On the other hand, the left eigenvector corresponding to $\lambda_{1}$ is $U=V^{t}$ and the left eigenvector corresponding to $\lambda_{2}$ is $(V^{(2)})^{t}$. Notably, $UV=1$.

From Theorems 3.2 and 3.3, the natural measure equals the Parry measure with probability vector $p=(\frac{1}{2},\frac{1}{6},\frac{1}{6},\frac{1}{6})$ and stochastic matrix $P=\left[\begin{array}{cccc}
0 & \frac{1}{3} &  \frac{1}{3}  &  \frac{1}{3}  \\
1 & 0 & 0 & 0 \\
1 & 0 & 0 & 0 \\
1 & 0 & 0 & 0 \\
\end{array}
\right]$.

It can be verified that
\begin{equation*}
\begin{array}{ccc}
H^{2n}=  3^{n-1} \left[
\begin{array}{cccc}
3 & 0 & 0 & 0 \\
0 & 1 & 1 & 1 \\
0 & 1 & 1 & 1 \\
0 & 1 & 1 & 1
\end{array}
\right]
& \text{and} &
H^{2n+1}=  3^{n} \left[
\begin{array}{cccc}
0 & 1 & 1 & 1\\
1 & 0 & 0 & 0 \\
1 & 0 & 0 & 0  \\
1 & 0 & 0 & 0
\end{array}
\right].
\end{array}
\end{equation*}
Then, the limit (\ref{eqn:1.6}) with even length
\begin{equation*}
m_{e}([i_{1},i_{2},\cdots, i_{n}])
\equiv\underset{n+k+l:\text{ even}}{\underset{k,l\rightarrow\infty}{\lim}} \left|C_{k,l}([i_{1},i_{2},\cdots, i_{n}]) \right|/ \left|\mathcal{B}_{n+k+l}(\Sigma) \right|
\end{equation*}
equals the Parry measure.
However, the limit (\ref{eqn:1.6}) with odd length
\begin{equation*}
\underset{n+k+l:\text{ odd}}{\underset{k,l\rightarrow\infty}{\lim}} \left|C_{k,l}([i_{1},i_{2},\cdots, i_{n}]) \right|/ \left|\mathcal{B}_{n+k+l}(\Sigma) \right|
\end{equation*}
oscillates and does not exist. Averaging the even and odd lengths according to (\ref{eqn:1.7}) reveals that the limit exists and equals the Parry measure.
\end{ex}
%
%
%
%
%

Now, suppose $A$ is a reducible $N\times N$ matrix. In this case, there exists a permutation matrix such that $A$ can be expressed as a block upper triangular matrix:

\begin{equation}\label{eqn:3.16-8}
A=\left[
\begin{array}{ccccc}
A_{1,1} & A_{1,2} & \cdots & \cdots & A_{1,K} \\
0 & A_{2,2} & \cdots & \cdots & A_{2,K} \\
0 & 0 & \cdots &\ddots& \cdots \\
0 & 0& \cdots & 0& A_{K,K} \\
\end{array}
\right],
\end{equation}
where $A_{i,i}$ is either irreducible or zero, $1\leq i\leq K$. Furthermore,

\begin{equation*}
\varepsilon(A)=\underset{i=1}{\overset{K}{\bigcup}}\varepsilon(A_{i,i}),
\end{equation*}
where $\varepsilon(A)$ and $\varepsilon(A_{i,i})$ are the sets of the eigenvalues of $A$ and $A_{i,i}$, respectively.

Let $n_{i}$ be the size of $A_{i,i}$, $1\leq i \leq K$. For $1\leq i \leq K$, let $m_{i}= \left(\underset{k=1}{\overset{i-1}{\sum}}n_{k}\right)$ and

\begin{equation}\label{eqn:3.16-9}
\mathcal{D}'_{i}=\left\{1\leq j\leq N \hspace{0.1cm}\mid\hspace{0.1cm} m_{i}+1\leq j\leq m_{i+1} \right\}.
\end{equation}

Suppose that there exists $1\leq j \leq K$ such that

\begin{equation}\label{eqn:3.16-109}
\rho(A_{j,j})=\rho(A)>\rho(A_{i,i})
\end{equation}
for $i\neq j$, where $\rho(B)$ is the Perron value of matrix $B$. Then, the natural measure exists and is completely determined by $A_{j,j}$ as in the irreducible cases.

\begin{thm}
\label{theorem:3.18}
Assume that $A=[a_{i,j}]_{ n\times n}$ is reducible of the form (\ref{eqn:3.16-8}), and there exists $1\leq j  \leq k$ such that
\begin{equation}\label{eqn:3.16-110}
\lambda=\rho(A_{j,j})=\rho(A)>\rho(A_{i,i})
\end{equation}
for $i\neq j$. If $A_{j,j}$ is aperiodic, then the natural measure (\ref{eqn:1.6}) exists; if $A_{j,j}$ is irreducible with period $p\geq 2$, then the natural measure (\ref{eqn:1.7}) exists. Moreover,
let $V_{j}=(v_{i})^{t}_{1\leq i \leq n_{j}}$ and $U_{j}=(u_{i})_{1\leq i \leq n_{j}}$ be the right and left eigenvectors of $A_{j,j}$ with respect to $\lambda$. Then, for any finite admissible pattern $[i_{1},i_{2},\cdots, i_{n}]$,

\begin{equation}\label{eqn:3.16-10}
\left\{
\begin{array}{ll}
 \mu([i_{1},i_{2},\cdots, i_{n}])=\frac{u_{i'_{1}}v_{i'_{n}}}{\lambda^{n-1}}a_{i_{1},i_{2}}a_{i_{2},i_{3}}\cdots a_{i_{n-1},i_{n}} & \text {if }i_{1},i_{2},\cdots, i_{n}\in \mathcal{D}'_{j};\\
 & \\
  \mu([i_{1},i_{2},\cdots, i_{n}])=0 & \text { otherwise},
\end{array}
\right.
\end{equation}
where $i'_{1}=i_{1}-m_{j}$ and $i'_{n}=i_{n}-m_{j}$.

\end{thm}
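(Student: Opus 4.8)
The plan is to follow the strategy of the irreducible aperiodic case and of Theorem 3.3, since the counting identity (\ref{eqn:3.9}) for $\left|C_{k,l}([i_1,\dots,i_n])\right|$, together with $\left|\mathcal{B}_{n+k+l}(\Sigma)\right|=\left|A^{n+k+l-1}\right|$, holds for any $0$-$1$ matrix, reducible or not. Everything therefore reduces to the asymptotics of the entries $(A^m)_{a,b}$ and of the row/column sums $\sum_q(A^m)_{q,a}$, $\sum_q(A^m)_{a,q}$, and the whole content of the theorem is that under the strict domination hypothesis (\ref{eqn:3.16-110}) these are governed by a rank-one term supplied entirely by $A_{j,j}$. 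First I would record the spectral structure of $A$: because $A$ is block upper triangular as in (\ref{eqn:3.16-8}), one has $\varepsilon(A)=\bigcup_i\varepsilon(A_{i,i})$, and since $\rho(A_{i,i})<\lambda$ for $i\neq j$ while $A_{j,j}$ is irreducible, $\lambda$ occurs in $\varepsilon(A)$ exactly once and is a simple eigenvalue of $A$. In the aperiodic case it is moreover the unique eigenvalue of maximal modulus, so $A^m/\lambda^m\to\Pi$, the rank-one spectral projection onto the $\lambda$-eigenspace; the remainder is $o(\lambda^m)$ because every other eigenvalue has modulus strictly below $\lambda$, so even Jordan blocks contribute only $m^{c}\tilde\lambda^{\,m}=o(\lambda^m)$. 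Writing $V_A$, $U_A$ for the right and left $\lambda$-eigenvectors of $A$ with $U_AV_A=1$, this yields the factored limit $(A^m)_{a,b}/\lambda^m\to (V_A)_a(U_A)_b$, exactly as in Proposition \ref{proposition:2.2-1}, equation (\ref{eqn:2.1-4}).

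Second I would compute the block components of $V_A$ and $U_A$ by back-substitution in the block-triangular eigenvalue equations. For $AV_A=\lambda V_A$, the block rows of index $>j$ read $(A_{i,i}-\lambda I)V^{(i)}=-\sum_{i'>i}A_{i,i'}V^{(i')}$; since $\lambda\notin\varepsilon(A_{i,i})$ for $i>j$, descending induction from the last block forces $V^{(i)}=0$ for all $i>j$, whereupon the $j$-th row gives $A_{j,j}V^{(j)}=\lambda V^{(j)}$, i.e. $V^{(j)}=V_j$ (the blocks $i<j$ are then determined but immaterial). Symmetrically $U^{(i)}=0$ for $i<j$ and $U^{(j)}=U_j$. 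In particular $U_AV_A=\sum_i U^{(i)}V^{(i)}=U_jV_j$, so normalizing $U_AV_A=1$ coincides with normalizing $U_jV_j=1$; moreover $(U_A)_a$ vanishes unless the block of $a$ is $\geq j$, while $(V_A)_b$ vanishes unless the block of $b$ is $\leq j$.

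Third I would substitute these asymptotics into (\ref{eqn:3.9}). As $k,l\to\infty$ the row/column sums satisfy $\sum_q(A^k)_{q,i_1}\sim\lambda^k(U_A)_{i_1}\bigl(\sum_q(V_A)_q\bigr)$ and $\sum_{q'}(A^l)_{i_n,q'}\sim\lambda^l(V_A)_{i_n}\bigl(\sum_{q'}(U_A)_{q'}\bigr)$, while $\left|A^{n+k+l-1}\right|\sim\lambda^{n+k+l-1}\bigl(\sum_a(V_A)_a\bigr)\bigl(\sum_b(U_A)_b\bigr)$; forming the ratio and cancelling, exactly as in the proof of Theorem 3.1, gives $\mu([i_1,\dots,i_n])=\lambda^{-(n-1)}(U_A)_{i_1}(V_A)_{i_n}\,a_{i_1,i_2}\cdots a_{i_{n-1},i_n}$. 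The decisive point is then that for an admissible pattern the block indices of $i_1,\dots,i_n$ are weakly increasing by upper-triangularity, so if both $(U_A)_{i_1}\neq0$ (block $\geq j$) and $(V_A)_{i_n}\neq0$ (block $\leq j$), then all of $i_1,\dots,i_n$ lie in $\mathcal{D}'_j$; replacing $(U_A)_{i_1},(V_A)_{i_n}$ by $u_{i'_1},v_{i'_n}$ recovers the first line of (\ref{eqn:3.16-10}), and in every other case the product is $0$, giving the second line.

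Finally, for the periodic subcase with $A_{j,j}$ of period $p\geq2$, the eigenvalues of modulus $\lambda$ are $\lambda e^{2\pi i(q-1)/p}$, $1\leq q\leq p$, all simple and all arising from $A_{j,j}$; the same back-substitution shows their right (resp. left) eigenvectors vanish on blocks $>j$ (resp. $<j$) and restrict on block $j$ to the eigenvectors described in Theorem \ref{theorem:3.2}. Averaging the $p$ shifted cylinders as in (\ref{eqn:1.7}) then reproduces verbatim the Cesàro computation of Theorem 3.3, with the off-diagonal contributions again annihilated, so the averaged limit exists and equals (\ref{eqn:3.16-10}). I expect the main obstacle to be the second step: one must verify that the off-diagonal (transient) blocks and the subdominant diagonal blocks contribute only $o(\lambda^m)$ and do not distort the rank-one limit. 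This is precisely where the strict inequality $\rho(A_{i,i})<\lambda$ for $i\neq j$ in (\ref{eqn:3.16-110}) is indispensable, as it simultaneously guarantees that $\lambda$ is simple, so that the limit factors as $(V_A)_a(U_A)_b$, and that the eigenvectors have the vanishing-block structure which localizes the measure on $\mathcal{D}'_j$.
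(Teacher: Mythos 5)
Your proposal is correct and follows essentially the same route as the paper: block back-substitution in the triangular eigenvalue equations to show that the right eigenvector of $A$ vanishes on blocks above $j$ and restricts to $V_{j}$ on block $j$ (and dually for the left eigenvector, with $U_AV_A=U_jV_j$), followed by the rank-one asymptotics of Theorem 3.1 applied to (\ref{eqn:3.9}). The only difference is that you spell out the final localization step — weak monotonicity of block indices along admissible words forcing the measure to vanish off $\mathcal{D}'_{j}$ — which the paper compresses into ``As Theorem 3.1, (\ref{eqn:3.16-10}) follows.''
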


\begin{proof}

Only the proof for irreducible and aperiodic $A$ is presented; the proofs for the other case are analogous.

Suppose that $V'=(v'_{i})^{t}_{1\leq i\leq n}$ and  $U'=(u'_{i})_{1\leq i\leq n}$ are the right and left eigenvectors of $A$ with respect to $\lambda$. For $1\leq k\leq K$, let $V'_{k}=(v'_{k;i})^{t}_{1\leq i\leq n_{k}}$ and $U'_{k}=(u'_{k;i})_{1\leq i\leq n_{k}}$ where

\begin{equation*}
\begin{array}{ccc}
v'_{k;i}=v'_{m_{k}+i} & \text{and} & u'_{k;i}=u'_{m_{k}+i},
\end{array}
\end{equation*}
i.e.,
\begin{equation*}
\begin{array}{ccc}
V'=(V'_{k})^{t}_{1\leq k\leq K} & \text{and} & U'=(U'_{k})_{1\leq k\leq K}.
\end{array}
\end{equation*}
Clearly, from (\ref{eqn:3.16-8}), we have

\begin{equation*}
\left\{
\begin{array}{rcl}
A_{1,1}V'_{1}+A_{1,2}V'_{2}+A_{1,3}V'_{3}+\cdots +A_{1,K}V'_{K} & = & \lambda V'_{1} \\
& & \\
A_{2,2}V'_{2}+A_{2,3}V'_{3}+\cdots +A_{2,K}V'_{K}& = & \lambda V'_{2}\\
&  \vdots &  \\
 A_{K,K}V'_{K}& = & \lambda V'_{K}.
\end{array}
\right.
\end{equation*}
Let $V'_{k}$ be a zero vector for $j+1 \leq k \leq K$. Then, $A_{j,j}V'_{j}=\lambda V'_{j}$, which implies $V'_{j}=V_{j}$. Therefore,
\begin{equation*}
\begin{array}{rl}
 & A_{j-1,j-1}V'_{j-1}+A_{j-1,j}V'_{j}+\cdots +A_{j-1,K}V'_{K} = \lambda V'_{j-1}\\
 & \\
\Rightarrow & (A_{j-1,j-1}-\lambda I)V'_{j-1}= -( A_{j-1,j}V'_{j}+\cdots +A_{j-1,K}V'_{K}).
\end{array}
\end{equation*}
Since $A_{j-1,j-1}-\lambda I$ is invertible, $V'_{j-1}$ can be computed. Subsequently, $V'_{j-1}, \cdots, V'_{1}$ are obtained.

Similarly, let $U'_{k}$ be zero for $1 \leq k \leq j-1$. Then, $U'_{j}A_{j,j}=\lambda U'_{j}$, which implies $U'_{j}=U_{j}$. Hence,
$U'_{j+1}, \cdots, U'_{K}$ can be obtained.

From above result,
\begin{equation*}
\begin{array}{ccc}
 U'V'=1 & \Leftrightarrow& U_{2}V_{2}=1.
 \end{array}
\end{equation*}
As Theorem 3.1, from (\ref{eqn:3.16-8}), (\ref{eqn:3.16-10}) follows. The proof is complete.

\end{proof}

\begin{rmk}
\label{remark:3.20}
When the condition (\ref{eqn:3.16-109}) fails, the finite averages like (\ref{eqn:1.6}) and (\ref{eqn:1.7}) cannot hold. The following example illustrates it.
Let $A=\left[ \begin{array}{cc} 1 & 1 \\ 0 & 1 \end{array}\right]$. Clearly, $A^{n}=\left[ \begin{array}{cc} 1 & n \\ 0 & 1\end{array}\right]$. It is easy to verify that both (\ref{eqn:1.6}) and (\ref{eqn:1.7}) do not exist. Furthermore, for any $M_{1},M_{2}\geq 1$, the limit

\begin{equation}\label{eqn:3.17-100}
\underset{k,l\rightarrow\infty}{\lim} \hspace{0.15cm} \frac{1}{M_{1}+M_{2}+1}  \underset{j=-M_{1}}{\overset{M_{2}}{\sum}} \left|C_{k-j,l+j}([i_{1},i_{2},\cdots, i_{n}]) \right|/ \left|\mathcal{B}_{n+k+l}(\Sigma) \right|
\end{equation}
does not exist, i.e., any finite average can not produce a measure.

However, if (\ref{eqn:3.17-100})  is replaced by
\begin{equation}\label{eqn:3.17-1}
\mu([i_{1},i_{2},\cdots, i_{n}])=\underset{k,l\rightarrow\infty}{\lim} \hspace{0.15cm} \frac{1}{k+l+1}  \underset{j=-l}{\overset{k}{\sum}} \left|C_{k-j,l+j}([i_{1},i_{2},\cdots, i_{n}]) \right|/ \left|\mathcal{B}_{n+k+l}(\Sigma) \right|,
\end{equation}
the measure $\mu$ defined by (\ref{eqn:3.17-1}) exists. Indeed, $\mu(0)=\mu(1)=\frac{1}{2}$ and $\mu(01)=0$. Furthermore, the measure $\mu$ is not ergodic.
\end{rmk}

In the remaining part of this section, the limits (\ref{eqn:1.6}) are (\ref{eqn:1.7}) that are restricted to periodic patterns are considered and they are shown to equal the Parry measure.
Denote by $\mathcal{P}_{n}(\Sigma)$ the set of all admissible periodic patterns (or allowable words) with period $n$:

\begin{equation}\label{eqn:3.18}
\mathcal{P}_{n}(\Sigma)=\left\{x\in \Sigma \hspace{0.1cm}\mid\hspace{0.1cm} \sigma^{n}(x)=x   \right\}.
\end{equation}
For any $k,l\geq 0$, let
\begin{equation}\label{eqn:3.18-1}
C^{(p)}_{k,l}([i_{1},i_{2},\cdots, i_{n}])=C_{k,l}([i_{1},i_{2},\cdots, i_{n}])\bigcap\mathcal{P}_{n+k+l-1}(\Sigma).
\end{equation}
When $A$ is irreducible and aperiodic, the periodic natural measure $\mu^{(p)}$ is defined as

\begin{equation}\label{eqn:3.19}
 \mu^{(p)}([i_{1},i_{2},\cdots, i_{n}])=\underset{k,l\rightarrow\infty}{\lim} \frac{\left|C^{(p)}_{k,l}([i_{1},i_{2},\cdots, i_{n}]) \right|}{\left|\mathcal{P}_{n+k+l-1}(\Sigma) \right|}.
\end{equation}
When $A$ is irreducible with period $p$, the periodic natural measure $\mu^{(p)}$ is defined as
\begin{equation}\label{eqn:3.20}
\mu^{(p)}\left(\left[i_{1},i_{2},\cdots, i_{n} \right]\right)= \underset{n+k+l-1\equiv 0 (\text{ mod } p) }{\underset{k,l\rightarrow\infty}{\lim}} \hspace{0.15cm} \frac{1}{p}\hspace{0.15cm}\frac{  \underset{j=0}{\overset{p-1}{\sum}} \left|C^{(p)}_{k-j,l+j}([i_{1},i_{2},\cdots, i_{n}]) \right|}{   \left|\mathcal{P}_{n+k+l-1}(\Sigma) \right|}.
\end{equation}

The following theorem is obtained in the same way as Theorems 3.1 and 3.3, so the proof is only outlined here.

\begin{thm}
\label{theorem:3.1}
Suppose $A$ is irreducible. Then $\mu^{(p)}$ exists and equals the Parry measure.
\end{thm}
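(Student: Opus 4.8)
The plan is to reduce the computation of $\mu^{(p)}$ to two elementary counting identities and then apply Proposition~\ref{proposition:2.2-1}. The first identity is that the number of period-$m$ points equals the trace of $A^{m}$: a point $x$ with $\sigma^{m}(x)=x$ is determined by the tuple $(x_{1},\dots,x_{m})$, which must form a closed path $x_{1}\to x_{2}\to\cdots\to x_{m}\to x_{1}$ in the graph of $A$, so $\left|\mathcal{P}_{m}(\Sigma)\right|=\operatorname{tr}(A^{m})=\sum_{i=1}^{N}(A^{m})_{i,i}$. The second identity counts the constrained periodic points: setting $m=n+k+l-1$, an element of $C^{(p)}_{k,l}([i_{1},\dots,i_{n}])$ is a closed loop of length $m$ that traverses $i_{1}\to\cdots\to i_{n}$ on the positions $1,\dots,n$ (using $n-1$ edges) and then returns from $i_{n}$ to $i_{1}$ along an arbitrary path of the remaining length $m-(n-1)=k+l$; hence
\[
\left|C^{(p)}_{k,l}([i_{1},\dots,i_{n}])\right|=a_{i_{1},i_{2}}a_{i_{2},i_{3}}\cdots a_{i_{n-1},i_{n}}\,(A^{k+l})_{i_{n},i_{1}}.
\]

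For irreducible and aperiodic $A$ I would divide the numerator and denominator of (\ref{eqn:3.19}) by $\lambda^{k+l}$ and $\lambda^{m}$, respectively, and invoke (\ref{eqn:2.1-4}): the numerator ratio satisfies $(A^{k+l})_{i_{n},i_{1}}/\lambda^{k+l}\to v_{i_{n}}u_{i_{1}}$, while the denominator ratio satisfies $\operatorname{tr}(A^{m})/\lambda^{m}=\sum_{i}(A^{m})_{i,i}/\lambda^{m}\to\sum_{i}v_{i}u_{i}=UV=1$. Cancelling the surplus factor $\lambda^{n-1}$ gives $\mu^{(p)}([i_{1},\dots,i_{n}])=\frac{u_{i_{1}}v_{i_{n}}}{\lambda^{n-1}}a_{i_{1},i_{2}}\cdots a_{i_{n-1},i_{n}}$, which is exactly the Parry measure (\ref{eqn:2.6}).

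For the periodic case I would first note that the cyclic block form (\ref{eqn:3.14}) forces every closed loop to have length divisible by $p$, so $\operatorname{tr}(A^{m})=0$ unless $p\mid m$; this is precisely why the limit in (\ref{eqn:3.20}) is restricted to $n+k+l-1\equiv 0\ (\mathrm{mod}\ p)$. On this residue class one writes $A^{m}=(A^{p})^{m/p}=\operatorname{diag}(B_{1}^{m/p},\dots,B_{p}^{m/p})$ from (\ref{eqn:3.15}) with each $B_{i}$ primitive, and then applies the Ces\`aro form (\ref{eqn:2.1-2}) to the averaged numerator $\frac{1}{p}\sum_{j=0}^{p-1}\left|C^{(p)}_{k-j,l+j}\right|$, reproducing the computation already carried out in the proof of Theorem 3.3.

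The main obstacle is the bookkeeping in the periodic case: one must check that the phases $e^{2\pi i(j-1)/p}$ carried by the $p$ eigenvalues of modulus $\lambda$ (Theorem 3.2) cancel correctly between the averaged numerator and the trace in the denominator, so that the oscillation present in each of these quantities separately disappears in the ratio and only the eigenvector product $u_{i_{1}}v_{i_{n}}$ survives in the limit. Everything else is a routine rescaling by powers of $\lambda$, identical in spirit to what was done for the non-periodic natural measure in Theorems 3.1 and 3.3.
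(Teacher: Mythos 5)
Your proposal is correct and follows essentially the same route as the paper: both count $\mathcal{P}_{m}$ by $\mathrm{tr}(A^{m})$ and the constrained periodic cylinders by $a_{i_{1},i_{2}}\cdots a_{i_{n-1},i_{n}}\,(A^{k+l})_{i_{n},i_{1}}$, then pass to the limit via Proposition 2.5 in the aperiodic case and via the root-of-unity eigenvector structure of Theorem 3.2 in the periodic case. Your exact counting identity for $\bigl|C^{(p)}_{k,l}\bigr|$ is in fact slightly more explicit than the paper's asymptotic statement, and it also makes clear that the $j$-average in (3.20) is harmless because the count depends only on $k+l$.
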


\begin{proof}
Assume $\lambda$ is the Perron value of $A$.
Let $U=(u_{1},u_{2},\cdots,u_{N})$ and $V=(v_{1},v_{2},\cdots,v_{N})^{t}$ be the left and right eigenvectors of $A$ with respect to $\lambda$.

First, the case with irreducible and aperiodic $A$ is considered.
It can be shown that
\begin{equation*}
\left|C^{(p)}_{k,l}([i_{1},i_{2},\cdots, i_{n}]) \right|\sim\left(\underset{q=1}{\overset{N}{\sum}}v_{q}u_{q}\right)\lambda^{k+l}u_{i_{1}}v_{i_{n}}  a_{i_{1},i_{2}}a_{i_{2},i_{3}}\cdots a_{i_{n-1},i_{n}}
\end{equation*}
and
\begin{equation*}
 \left|\mathcal{P}_{n+k+l-1}\right|=\text{tr}\left(A^{n+k+l-1}\right)\sim\left(\underset{q=1}{\overset{N}{\sum}}v_{q}u_{q}\right)\lambda^{n+k+l-1}
\end{equation*}
for large $k$ and $l$. Then,

\begin{equation*}
\begin{array}{rl}
\mu^{(p)}([i_{1},i_{2},\cdots, i_{n}])= & \frac{u_{i_{1}}v_{i_{n}}}{\lambda^{n-1}}a_{i_{1},i_{2}}a_{i_{2},i_{3}}\cdots a_{i_{n-1},i_{n}} \\
& \\
= & \mu([i_{1},i_{2},\cdots, i_{n}])
\end{array}
\end{equation*}
Hence, the result follows immediately.

Now, consider irreducible $A$ with period $p\geq 2$. It can be verified that if $n+k+l-1\equiv 0 (\text{ mod } p)$, then

\begin{equation*}
\underset{j=0}{\overset{p-1}{\sum}} \left|C^{(p)}_{k-j,l+j}([i_{1},i_{2},\cdots, i_{n}]) \right| \sim p^{2}\left(\underset{q=1}{\overset{N}{\sum}}v_{q}u_{q}\right) \lambda^{k+l}u_{i_{1}}v_{i_{n}} a_{i_{1},i_{2}}a_{i_{2},i_{3}}\cdots a_{i_{n-1},i_{n}}.
\end{equation*}
and

\begin{equation*}
 \left|\mathcal{P}_{n+k+l-1}\right|=\text{tr}\left(A^{n+k+l-1}\right)\sim p \left(\underset{q=1}{\overset{N}{\sum}}v_{q}u_{q}\right)\lambda^{n+k+l-1}
\end{equation*}
for large $k$ and $l$. Therefore,
\begin{equation*}
\begin{array}{rl}
\mu^{(p)}([i_{1},i_{2},\cdots, i_{n}])= & \frac{u_{i_{1}}v_{i_{n}}}{\lambda^{n-1}}a_{i_{1},i_{2}}a_{i_{2},i_{3}}\cdots a_{i_{n-1},i_{n}} \\
& \\
= & \mu([i_{1},i_{2},\cdots, i_{n}]).
\end{array}
\end{equation*}
The proof is complete.
\end{proof}

\section{Sofic shifts}
\setcounter{equation}{0}
This section concerns the natural measure of sofic shifts. First, recall the notation and properties of sofic shifts. For details, see Lind and Marcus \cite{1}.

A graph $G$ is composed of a finite set $\mathcal{V}=\mathcal{V}(G)$ of vertices and a finite set $\mathcal{E}=\mathcal{E}(G)$ of edges. Each edge $\xi\in\mathcal{E}$ starts at vertex $i(\xi)\in\mathcal{V}(G) $ and terminates at vertex $t(\xi)\in\mathcal{V}(G)$.

For vertices $i,j\in\mathcal{V}(G)$, denote by $A_{i,j}$ the numbers of edges in $G$ with initial state $i$ and terminal state $j$. The adjacency
matrix of $G$ is $A_{G}=[A_{i,j}]$. Graph $G$ is called irreducible if $A_{G}$ is irreducible. $G$ is called aperiodic if $A_{G}$ is aperiodic.

A labeled graph $\mathcal{G}$ is a pair $\mathcal{G}=(G, \mathcal{L})$ where the labeling $\mathcal{L}:\mathcal{E}\rightarrow \mathcal{S}$ assigns to each edge $\xi\in\mathcal{E}(G)$ a label $\mathcal{L}(\xi)$ from the finite alphabet $\mathcal{S}=\{s_{1},s_{2},\cdots, s_{K}\}$. $\mathcal{G}$ is irreducible if $G$ is irreducible.

For $i,j\in\mathcal{V}(G)$, let $\mathcal{A}_{i,j}$ be the formal sum of alphabets from vertex $i$ to $j$. The symbolic adjacency matrix of $\mathcal{G}$ is defined by $\mathcal{A}=\mathcal{A}(\mathcal{G})=[\mathcal{A}_{i,j}]$.

A labeled graph $\mathcal{G}=(G,\mathcal{L})$ is right-resolving if for each vertex $i$ of $G$, the edges that start at $i$ carry different labels, i.e., for each $i$, the restriction of $\mathcal{L}$ to $\mathcal{E}_{i}$ is one-to-one. A labeled graph $\mathcal{G}$ is called left-resolving if the edges that come to each vertex carry different labels. The right-solving proposition of $\mathcal{G}$ can be expressed in terms of its symbolic adjacency matrix $\mathcal{A}=[\mathcal{A}_{i,j}]$. For each $s\in\mathcal{S}$, denote by

\begin{equation}\label{eqn:4.1}
\mathcal{A}(s)=[\mathcal{A}_{i,j}(s)],
\end{equation}
where $\mathcal{A}_{i,j}(s)$ is $s$ or $\emptyset$. Indeed, $\mathcal{A}_{i,j}(s)=s$ if and only if $s\in \mathcal{A}_{i,j}$. $\mathcal{G}$ is right-resolving if

\begin{equation}\label{eqn:4.2}
\left| \underset{j}{\sum} \mathcal{A}_{i,j}(s)\right|\leq 1
\end{equation}
for any $s\in\mathcal{S}$ and $i\in \mathcal{V}(G)$.
Similarly,  $\mathcal{G}$ is left-resolving if

\begin{equation}\label{eqn:4.2-0}
\left| \underset{i}{\sum} \mathcal{A}_{i,j}(s)\right|\leq 1
\end{equation}
for any $s\in\mathcal{S}$ and $i\in \mathcal{V}(G)$.

First, recall the edge shift $X_{G}$ and the sofic shift $X_{\mathcal{G}}$.
Let $G$ be a graph with edge set $\mathcal{E}$. The edge shift $X_{G}$ is defined by

\begin{equation}\label{eqn:4.2-1}
X_{G}=\left\{\xi=(\xi_{i})_{i=-\infty}^{\infty}\in\mathcal{E}^{\mathbb{Z}^{1}}\hspace{0.1cm}\mid \hspace{0.1cm}t(\xi_{i})=i(\xi_{i+1}) \text{ for all }i\in\mathbb{Z}^{1}\right\}.
\end{equation}
The set of the admissible patterns of $G$ with length $n$ is defined by

\begin{equation}\label{eqn:4.2-2}
\Sigma_{n}(G)=\left\{\xi=(\xi_{i})_{i=0}^{n-1}\in\mathcal{E}^{\mathbb{Z}_{n}}\hspace{0.1cm}\mid \hspace{0.1cm}t(\xi_{i})=i(\xi_{i+1}) \text{ for all }i=0,1,\cdots,n-1\right\}.
\end{equation}

The sofic shift $X_{\mathcal{G}}$ is defined by

\begin{equation}\label{eqn:4.3}
\begin{array}{rl}
X_{\mathcal{G}}= & \left\{x\in\mathcal{S}^{\mathbb{Z}^{1}} \hspace{0.1cm}\mid\hspace{0.1cm} x=\mathcal{L}_{\infty}(\xi)\equiv\left(\mathcal{L}(\xi_{n})\right)_{n=-\infty}^{\infty} \text{ where }
\xi=\left(\xi_{n}\right)_{n=-\infty}^{\infty}\in X_{G} \right\} \\
& \\
= & \mathcal{L}_{\infty}(X_{G}).
\end{array}
\end{equation}
Clearly, $\mathcal{L}$ is a one-block code and $\mathcal{L}_{\infty}: X_{G}\rightarrow X_{\mathcal{G}}$ is a sliding block code. From the definition of $X_{\mathcal{G}}$, $X_{\mathcal{G}}$ is a factor of $X_{G}$.

Based on the subset construction method, every sofic shift has a right-resolving presentation; see Lind and Marcus \cite{1}.

Hereafter, $X_{\mathcal{G}}$ is always assumed to be an irreducible and right-resolving sofic shift with $N$ vertices and $K$ alphabets. Decompose

\begin{equation}\label{eqn:4.4}
\mathcal{A}=\underset{r=1}{\overset{K}{\sum}}\mathcal{A}_{r},
\end{equation}
where $\mathcal{A}_{r}=\mathcal{A}(s_{r})$. Accordingly, the associated adjacency matrix

\begin{equation}\label{eqn:4.5}
\mathbb{A}=\underset{r=1}{\overset{K}{\sum}}\mathbb{A}_{r},
\end{equation}
where $\mathbb{A}_{r}=[a_{r;i,j}]_{N\times N}$. The right-resolving property of $\mathcal{G}$ or condition (\ref{eqn:4.2}) implies

\begin{equation}\label{eqn:4.6}
 \underset{j=1}{\overset{N}{\sum}}a_{r;i,j}\leq 1,
\end{equation}
for all $1\leq r\leq K$.

To compute the number of the set $\mathcal{B}_{n}$ of admissible patterns with length $n$ in $X_{\mathcal{G}}$, some auxiliary graphs, which are similar to those used by Lind and Marcus \cite{1} to study the periodic cycles on zeta functions, are introduced.

First, $\mathcal{G}$ is assumed to be right-resolving. Therefore, for each $i\in\mathcal{V}$, at most one edge that is labeled $s$ starts at $i$. When such an edge exists, the terminal state is denoted as $s(i)$; otherwise $s(i)$ is not defined.

For each $1\leq k\leq N =|\mathcal{V}|$ and $1\leq l\leq k$, the symbolic adjacency matrix  $\widetilde{\mathcal{A}}_{k,l}=\left[\left(\widetilde{\mathcal{A}}_{k,l} \right)_{i,j}\right]$ with the alphabets in $\mathcal{S}$ is defined as follows. Let vertex set $\mathcal{V}_{j}$ be the set of all subsets of $\mathcal{V}$ with $j$ elements. Therefore, $|\mathcal{V}_{j}|=\left(
\begin{array}{c}
N \\ j
\end{array}
\right)$. Fix an order of the elements of $\mathcal{V}_{j}$. Let $\mathcal{V}_{j}=\left\{V_{j;1},V_{j;2},\cdots,V_{j;|\mathcal{V}_{j}|}\right\}$.

Let $V_{k;i}=\{I_{1},I_{2},\cdots, I_{k}\}$ and $V_{l;j}=\{J_{1},J_{2},\cdots, J_{l}\}$ in $\mathcal{V}_{k}$ and $\mathcal{V}_{l}$. Then, $\left(\widetilde{\mathcal{A}}_{k,l} \right)_{i,j}$ equals the formal "sum" of the labels in

\begin{equation}\label{eqn:4.6-2}
\mathcal{S}_{k,l;i,j}=\underset{s\in\mathcal{S}}{\bigcup}\left\{s \hspace{0.1cm}\mid \hspace{0.1cm} \{s(I_{1}),s(I_{2}),\cdots,s(I_{k})\}=V_{l;j}\right\}.
\end{equation}
If $\mathcal{S}_{k,l;i,j}=\emptyset$, $\left(\widetilde{\mathcal{A}}_{k,l} \right)_{i,j}=\emptyset$.
Let $\widetilde{\mathbb{A}}_{k,l}$ be the associated adjacency matrix from $\widetilde{\mathcal{A}}_{k,l}$, which is obtained by setting each $s\in\mathcal{S}$ equal to one. Notably,  $\widetilde{\mathbb{A}}_{1,1}=\mathbb{A}$.

Now, combine $\left\{\widetilde{\mathcal{A}}_{k,l}\right\}_{1\leq k\leq N, 1\leq l\leq k}$ to generate a symbolic matrix $\widehat{\mathcal{A}}$, as follows.
Let

\begin{equation}\label{eqn:3.16-25}
\widehat{\mathcal{A}}=\left[
\begin{array}{ccccc}
\widetilde{\mathcal{A}}_{N,N} & \widetilde{\mathcal{A}}_{N,N-1} & \cdots & \cdots & \widetilde{\mathcal{A}}_{N,1} \\
0 & \widetilde{\mathcal{A}}_{N-1,N-1} & \cdots & \cdots & \widetilde{\mathcal{A}}_{N-1,1} \\
0 & 0 & \cdots &\ddots& \cdots \\
0 & 0& \cdots & 0& \widetilde{\mathcal{A}}_{1,1} \\
\end{array}
\right].
\end{equation}
To express easily the matrix multiple of $\widehat{\mathcal{A}}$, for $n\geq 1$, let

\begin{equation}\label{eqn:3.16-26}
\widehat{\mathcal{A}}^{n}=\left[
\begin{array}{ccccc}
\widehat{\mathcal{A}}_{n;N,N} & \widehat{\mathcal{A}}_{n;N,N-1} & \cdots & \cdots & \widehat{\mathcal{A}}_{n;N,1} \\
0 & \widehat{\mathcal{A}}_{n;N-1,N-1} & \cdots & \cdots & \widehat{\mathcal{A}}_{n;N-1,1} \\
0 & 0 & \cdots &\ddots& \cdots \\
0 & 0& \cdots & 0& \widehat{\mathcal{A}}_{n;1,1} \\
\end{array}
\right],
\end{equation}
where
\begin{equation*}
\widehat{\mathcal{A}}_{n;k,l}=\underset{l\leq i_{n}\leq i_{n-1} \leq\cdots\leq i_{1}\leq k}{\sum}\widetilde{\mathcal{A}}_{k,i_{1}}\widetilde{\mathcal{A}}_{i_{1},i_{2}}\cdots\widetilde{\mathcal{A}}_{i_{n-1},l}
\end{equation*}
for $1\leq k\leq N$ and $1\leq l\leq k$.
The associated adjacency matrix $\widehat{\mathbb{A}}$ of $\widehat{\mathcal{A}}$ can be defined. Similarly, denote by

\begin{equation}\label{eqn:3.16-27}
\widehat{\mathbb{A}}^{n}=\left[
\begin{array}{ccccc}
\widehat{\mathbb{A}}_{n;N,N} & \widehat{\mathbb{A}}_{n;N,N-1} & \cdots & \cdots & \widehat{\mathbb{A}}_{n;N,1} \\
0 & \widehat{\mathbb{A}}_{n;N-1,N-1} & \cdots & \cdots & \widehat{\mathbb{A}}_{n;N-1,1} \\
0 & 0 & \cdots &\ddots& \cdots \\
0 & 0& \cdots & 0& \widehat{\mathbb{A}}_{n;1,1} \\
\end{array}
\right].
\end{equation}

As (\ref{eqn:4.1}), for any $1\leq k\leq N$, $1\leq l\leq k$ and $s\in\mathcal{S}$, define $\widetilde{\mathcal{A}}_{k,l}(s)=\left[\left(\widetilde{\mathcal{A}}_{k,l}(s)\right)_{i,j}\right]$ by
\begin{equation*}
\left(\widetilde{\mathcal{A}}_{k,l}(s)\right)_{i,j}=\left\{
\begin{array}{ll}
s & \text{ if }s\in \left(\widetilde{\mathcal{A}}_{k,l}\right)_{i,j}, \\
& \\
\emptyset & \text{ otherwise.}
\end{array}
\right.
\end{equation*}
Hence, denote by $\widetilde{\mathbb{A}}_{k,l}(s)$ the associated adjacent matrix of $\widetilde{\mathcal{A}}_{k,l}(s)$.
Therefore, $\mathcal{B}_{n}$ can be determined from $\widehat{\mathcal{A}}$ as follows.

\begin{lem}
\label{lemma:4.2}
If $\mathcal{G}$ is right-resolving, then the number of $\mathcal{B}_{n}=\mathcal{B}_{n}(\mathcal{G})$ of all admissible patterns with length $n$ is expressed by

\begin{equation}\label{eqn:4.7-7}
\left|\mathcal{B}_{n}\right|=\underset{k=1}{\overset{N}{\sum}} (-1)^{k+1}\underset{1\leq l\leq k}{\sum} \left|\widehat{\mathbb{A}}_{n;k,l}\right|.
\end{equation}
In particular, if $\mathcal{G}$ is both right-resolving and left-resolving, then
\begin{equation}\label{eqn:4.7-1}
|\mathcal{B}_{n}|=\underset{k=1}{\overset{N}{\sum}} (-1)^{k+1}\left|\widehat{\mathbb{A}}_{n;k,k}\right|.
\end{equation}

\end{lem}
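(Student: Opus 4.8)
The plan is to read the formula \eqref{eqn:4.7-7} as a Bonferroni / inclusion--exclusion count, exploiting that in a right-resolving graph a word determines its path once the initial vertex is fixed. For a word $w=s_{i_{1}}s_{i_{2}}\cdots s_{i_{n}}$ over $\mathcal{S}$, let
\[
R(w)=\{v\in\mathcal{V}\mid \text{there is a path in }G\text{ starting at }v\text{ with label }w\},
\]
and set $r(w)=|R(w)|$. Since $\mathcal{G}$ is right-resolving, for each $v\in R(w)$ this path is unique, and $w\in\mathcal{B}_{n}$ precisely when $R(w)\neq\emptyset$; hence $|\mathcal{B}_{n}|=\sum_{w}\mathbf{1}[r(w)\geq 1]$, the sum running over all words of length $n$. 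The whole proof then reduces to showing that the right-hand side of \eqref{eqn:4.7-7} computes exactly this count.

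The key step is to identify what $\widehat{\mathbb{A}}_{n;k,l}$ counts. Reading a symbol $s$ from a $k$-element vertex set $V_{k;i}$ is recorded in $\widetilde{\mathcal{A}}_{k,l}$ only when $s$ is defined at every vertex of $V_{k;i}$ (otherwise $\{s(I_{1}),\dots,s(I_{k})\}$ is not a genuine $l$-subset of $\mathcal{V}$ and contributes to no entry); the image set then has size $l\le k$, with $l<k$ exactly when $s$ identifies two vertices. Consequently, reading $w$ from a $k$-set $V$ is admissible in this bookkeeping iff $V\subseteq R(w)$, and the successive reachable sets $V=V^{(0)},V^{(1)},\dots,V^{(n)}=w(V)$ have non-increasing cardinalities $k=i_{0}\geq i_{1}\geq\cdots\geq i_{n}=l$. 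This non-increasing behaviour is precisely what the block upper-triangular shape of $\widehat{\mathcal{A}}$ in \eqref{eqn:3.16-25} encodes: the $(k,l)$-block of $\widehat{\mathcal{A}}^{n}$ is the sum $\widehat{\mathcal{A}}_{n;k,l}$ over all non-increasing size sequences, so each admissible pair $(V,w)$ with $|V|=k$, $V\subseteq R(w)$, $|w(V)|=l$ contributes exactly one to $\widehat{\mathbb{A}}_{n;k,l}$, namely the unique term matching its size sequence. Summing over $l$ and over intermediate subsets, I would conclude
\[
\sum_{l=1}^{k}\bigl|\widehat{\mathbb{A}}_{n;k,l}\bigr|=\bigl|\{(V,w)\mid |V|=k,\ V\subseteq R(w)\}\bigr|=\sum_{w}\binom{r(w)}{k}.
\]

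It remains to assemble the alternating sum. Since $r(w)\le N$, one has $\binom{r(w)}{k}=0$ for $k>r(w)$, so
\[
\sum_{k=1}^{N}(-1)^{k+1}\binom{r}{k}=1-(1-1)^{r}=\begin{cases}1&\text{if }r\geq 1,\\0&\text{if }r=0,\end{cases}
\]
and therefore $\sum_{k=1}^{N}(-1)^{k+1}\sum_{l=1}^{k}|\widehat{\mathbb{A}}_{n;k,l}|=\sum_{w}\mathbf{1}[r(w)\geq 1]=|\mathcal{B}_{n}|$, which is \eqref{eqn:4.7-7}. For the left-resolving refinement \eqref{eqn:4.7-1}, I would observe that left-resolving means at most one $s$-labelled edge enters each vertex, so the partial map $v\mapsto s(v)$ is injective; reading then never identifies two vertices and $|w(V)|=|V|$ whenever $V\subseteq R(w)$. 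Hence $\widehat{\mathbb{A}}_{n;k,l}=0$ for $l<k$, the inner sum collapses to its diagonal term $|\widehat{\mathbb{A}}_{n;k,k}|$, and \eqref{eqn:4.7-1} follows.

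The main obstacle is the bookkeeping in the key step. One must verify carefully that, under the right-resolving hypothesis, each $(V,w)$ is counted in $\widehat{\mathbb{A}}_{n;k,l}$ with multiplicity exactly one: that distinct first symbols yielding the same set transition are (correctly) counted separately, while a single word is never double-counted across different size sequences. Equally delicate is confirming that the intended reading of $\widetilde{\mathcal{A}}_{k,l}$ is "$s$ defined at \emph{every} vertex of $V_{k;i}$", so that the reachable-set cardinalities track $V\subseteq R(w)$ and not $V\cap R(w)\neq\emptyset$; the two readings produce $\sum_{w}\binom{r}{k}$ versus $\sum_{w}\bigl(\binom{N}{k}-\binom{N-r}{k}\bigr)$, and only the former yields $|\mathcal{B}_{n}|$ after the alternating sum.
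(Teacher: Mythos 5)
Your proposal is correct and follows essentially the same route as the paper's proof: both identify that a word $w$ with $r(w)=|\mathcal{L}^{-1}(w)|$ presentations is counted $\binom{r(w)}{k}$ times across the level-$k$ blocks $\{\widehat{\mathbb{A}}_{n;k,l}\}_{1\leq l\leq k}$, and both finish with the identity $\sum_{k=1}^{N}(-1)^{k+1}\binom{r}{k}=\mathbf{1}[r\geq 1]$, with the same injectivity observation disposing of the off-diagonal blocks in the left-resolving case. Your write-up is in fact more explicit than the paper's about the bookkeeping (that readability of $w$ from a $k$-set $V$ means $V\subseteq R(w)$, and that each pair $(V,w)$ matches exactly one non-increasing size sequence), which the paper leaves as "it can be easily shown."
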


\begin{proof}

Let $u=u_{1}u_{2}\cdots u_{n}\in \mathcal{S}^{n}$. It is easy to verify that $u\in\mathcal{B}_{n}$ if and only if $u$ appears in $\widetilde{\mathcal{A}}_{n;1,1}$. Every $u$ that appears in $\widehat{\mathcal{A}}^{n}$ is in $\mathcal{B}_{n}$.

Given $u=u_{1}u_{2}\cdots u_{n}\in\mathcal{B}_{n}$, define
 \begin{equation*}
\mathcal{L}^{-1}(u)=\left\{\xi=\xi_{1}\xi_{2}\cdots  \xi_{n}\in \Sigma_{n}(G)
\hspace{0.1cm}\mid\hspace{0.1cm} \mathcal{L}(\xi_{1})\mathcal{L}(\xi_{2})\cdots \mathcal{L}(\xi_{n})=u
\right\}.
\end{equation*}
Let $|\mathcal{V}(G)|=N$. Since $\mathcal{G}$ is right-resolving,  $1\leq|\mathcal{L}^{-1}(u)|\leq N$.

Suppose $u=u_{1}u_{2}\cdots u_{n}\in\mathcal{B}_{n}$ with $|\mathcal{L}^{-1}(u)|=j$, $1\leq j\leq N$, it can be easily shown that $u$ totally appears in

\begin{equation*}
\left\{\widehat{\mathcal{A}}_{n;k,l} \hspace{0.1cm}\mid \hspace{0.1cm} 1\leq l\leq k \right\}
\end{equation*}
$\left(
\begin{array}{c}
j \\ k
\end{array}
\right)$ times. Here, $\left(
\begin{array}{c}
j \\ k
\end{array}
\right)=0$ for $k\geq j+1$. Clearly,

\begin{equation*}
\underset{k=1}{\overset{N}{\sum}} (-1)^{k+1}\left(
\begin{array}{c}
j \\ k
\end{array}
\right)=\underset{k=1}{\overset{j}{\sum}} (-1)^{k+1}\left(
\begin{array}{c}
j \\ k
\end{array}
\right) =1.
\end{equation*}
Hence, (\ref{eqn:4.7-7}) follows.

If $\mathcal{G}$ is both right-resolving and left-resolving, for $1\leq l \leq k-1$, it is easy to see that $\left(\widetilde{\mathcal{A}}_{k,l}\right)_{s,t}=\emptyset$ for all $s,t$. Therefore, from (\ref{eqn:4.7-7}), (\ref{eqn:4.7-1}) follows.
%
%
%
\end{proof}

Before the natural measure of sofic shift can be obtained, the following notation must be introduced and some related properties are shown.

For a sofic shift $X_{\mathcal{G}}$, $\mathcal{G'}=(G',\mathcal{L}')$ is called a minimal right-resolving presentation of $X_{\mathcal{G}}$ if it has the fewest vertices of any right-resolving presentation of $X_{\mathcal{G}}$.

Notably, given an irreducible and right-resolving presentation of sofic shift $X_{\mathcal{G}}$, an algorithm for constructing the minimal right-resolving presentation $\mathcal{G'}(G',\mathcal{L}')$ of $X_{g}$ always exists. Moreover, $\mathcal{G'}=(G',\mathcal{L}')$ is irreducible \cite{1}.

The following result is obtained when $\mathcal{G}(G,\mathcal{L})$ is minimal right-resolving and irreducible.

\begin{lem}
\label{lemma:4.3}
Assume $\mathcal{G}=(G,\mathcal{L})$ is right-resolving and irreducible. Let $\lambda_{j}$ be the Perron value of $\widetilde{\mathbb{A}}_{j,j}$. Then

\begin{equation}\label{eqn:4.8}
\begin{array}{ccc}
\lambda_{1}>\lambda_{j}, 2\leq j\leq N & \text{if } & \mathcal{G}=(G,\mathcal{L}) \text{ is minimal right-resolving.}
\end{array}
\end{equation}

\end{lem}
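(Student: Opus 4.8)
The plan is to reduce the entire statement to the single strict inequality $\lambda_2<\lambda_1$, and then to split that into an easy non-strict half and a genuinely hard strict half in which the minimality hypothesis is essential. For the reduction, observe that the $n$-th power of $\widetilde{\mathbb{A}}_{j,j}$ counts pairs $(S,w)$ where $S$ is a $j$-element vertex set and $w$ is a length-$n$ label word acting injectively on $S$ at every step (so the image keeps cardinality $j$). Restricting such a $w$ to any fixed $2$-element subset $\{a,b\}\subseteq S$ shows that $w$ keeps $a,b$ distinct throughout, so $(\{a,b\},w)$ is counted by $\widetilde{\mathbb{A}}_{2,2}$; since each $(\{a,b\},w)$ arises from at most $\binom{N-2}{j-2}$ sets $S$, we get $|\widetilde{\mathbb{A}}_{j,j}^{\,n}|\le\binom{N-2}{j-2}\,|\widetilde{\mathbb{A}}_{2,2}^{\,n}|$ and hence $\lambda_j\le\lambda_2$ for $2\le j\le N$. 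For the non-strict bound $\lambda_2\le\lambda_1$, any $w$ counted by $\widetilde{\mathbb{A}}_{2,2}$ is readable from two vertices and so is admissible; as each admissible word has at most $N$ presenting paths while $|\mathcal{B}_n|$ grows like $\lambda_1^{\,n}$ (the Perron value of $\widetilde{\mathbb{A}}_{1,1}=\mathbb{A}$ being the entropy base of the right-resolving presentation), we obtain $|\widetilde{\mathbb{A}}_{2,2}^{\,n}|\le N^2\,|\mathcal{B}_n|\le C\lambda_1^{\,n}$, i.e. $\lambda_2\le\lambda_1$. Thus everything comes down to ruling out equality.

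To prove $\lambda_2<\lambda_1$ I would argue by contradiction, assuming $\lambda_2=\lambda_1$, and convert this equality of exponential growth rates into a statement of positive measure. Let $\mu_G$ be the Parry measure of the edge shift of $G$ (here $G$ is irreducible because $\mathcal{G}$ is minimal right-resolving) and set $\nu=(\mathcal{L}_\infty)_{*}\mu_G$. Since $\mathcal{L}_\infty$ preserves entropy, $\nu$ is the unique measure of maximal entropy on $X_{\mathcal{G}}$, and the explicit Parry formula (see \cite{2}) gives uniform two-sided bounds $c_1\lambda_1^{-n}\le\mu_G([\gamma])\le c_2\lambda_1^{-n}$ over all length-$n$ paths $\gamma$, the constants coming from the positive eigenvectors $U,V$.

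Let $D_n$ be the set of length-$n$ label words admitting two presenting paths in $G$ that stay vertex-distinct on $[0,n)$. By the definition of $\lambda_2$ and the multiplicity bound above, $|D_n|\ge c\,\lambda_2^{\,n}$ for infinitely many $n$. Writing $A_n\subseteq X_{\mathcal{G}}$ for the cylinder event $x_{[0,n)}\in D_n$, and using $\nu([w])\ge\mu_G([\gamma_0])\ge c_1\lambda_1^{-n}$ for any single presenting path $\gamma_0$, we get $\nu(A_n)\ge c\,c_1(\lambda_2/\lambda_1)^{\,n}$, which under the assumption $\lambda_2=\lambda_1$ is bounded below by a positive constant along that subsequence. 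Since two vertex-distinct presentations of a longer word restrict to vertex-distinct presentations of the shorter one, the $A_n$ are nested, $A_{n+1}\subseteq A_n$; hence $\nu(A_n)$ is non-increasing and $\nu\!\left(\bigcap_n A_n\right)=\lim_n\nu(A_n)>0$.

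The main obstacle, and exactly where minimality must be used, is to contradict this positive measure. The set $\bigcap_n A_n$ is contained in the set of points carrying two presenting paths that differ at every coordinate, hence having two distinct $\mathcal{L}_\infty$-preimages. The key fact I would invoke is that the minimal (follower-separated) presentation admits intrinsically synchronizing words, each pinning down the current vertex; by ergodicity of $\mu_G$ such a word occurs $\mu_G$-almost surely, so the presenting path is $\nu$-almost surely unique and $\nu(\bigcap_n A_n)=0$, the desired contradiction. Together with $\lambda_j\le\lambda_2$ this yields $\lambda_1>\lambda_j$ for all $2\le j\le N$. I expect the delicate points to be (i) the sharp form of $\nu=(\mathcal{L}_\infty)_{*}\mu_G$ with the uniform Parry bounds, and (ii) the almost-everywhere injectivity of $\mathcal{L}_\infty$, which holds precisely for the minimal presentation and fails for redundant right-resolving presentations (cf. \cite{1}); this last point is the genuine content of the hypothesis that $\mathcal{G}$ is minimal right-resolving.
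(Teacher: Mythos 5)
Your proposal is correct, but it proves the strict inequality by a genuinely different mechanism than the paper. The paper argues by contradiction directly on the chain of labeled graphs $\mathcal{G}_1,\dots,\mathcal{G}_N$ built from the $\widetilde{\mathcal{A}}_{j,j}$: it uses the inclusions $X_{\mathcal{G}_1}\supseteq X_{\mathcal{G}_2}\supseteq\cdots\supseteq X_{\mathcal{G}_N}$ together with the Lind--Marcus fact that a proper subshift of an irreducible sofic shift has strictly smaller entropy, so equality of Perron values forces equality of the shift spaces; it then contradicts minimality either by counting vertices ($\mathcal{G}_N$ has one vertex) or by producing, via follower-set merging, a second non-isomorphic minimal right-resolving presentation, violating uniqueness. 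You instead first reduce to $\lambda_2<\lambda_1$ by an elementary injection bound $|\widetilde{\mathbb{A}}_{j,j}^{\,n}|\le\binom{N-2}{j-2}|\widetilde{\mathbb{A}}_{2,2}^{\,n}|$ (cleaner than the paper's implicit ordering of the $\lambda_j$), and then rule out $\lambda_2=\lambda_1$ by a measure-theoretic argument: the uniform Parry bounds on the edge shift convert the counting estimate $|D_n|\gtrsim\lambda_2^{\,n}$ into $\nu\bigl(\bigcap_n A_n\bigr)>0$, while the existence of a synchronizing word for the minimal presentation (plus ergodicity and a K\"onig-lemma extraction of two everywhere-distinct infinite presenting paths) forces that intersection to be null. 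Both proofs invoke minimality at exactly one decisive point --- the paper through uniqueness of the minimal presentation, you through the existence of a synchronizing word --- and both are legitimate; yours trades the sofic-subshift entropy-drop theorem for basic ergodic theory of the Parry measure. One phrase to tighten: you assert that the presenting path is $\nu$-almost surely unique, which is stronger than what the synchronizing-word occurrence gives (in a right-resolving graph it only collapses the paths from the synchronizing position onward); but that weaker conclusion is all your contradiction needs, since it already precludes two paths that are vertex-distinct at every coordinate of $[0,\infty)$.
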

\begin{proof}
Let $\mathcal{G}_{k}$ be the labeled graph that is constructed by $\widetilde{\mathcal{A}}_{k,k}$, $1\leq k\leq N$.
Clearly, if $\mathcal{G}=\mathcal{G}_{1}$ is right-solving, then $\mathcal{G}_{k}$ is right-resolving for all $2\leq k\leq N$. From the construction of $\mathcal{G}_{k}$, $k\geq 2$, $X_{\mathcal{G}_{1}}\supseteq X_{\mathcal{G}_{2}}\supseteq\cdots \supseteq X_{\mathcal{G}_{N}}$.

We prove (\ref{eqn:4.8}) by contradiction.  Assume $\lambda_{1}=\lambda_{2}=\cdots=\lambda_{N}$ or $\lambda_{1}=\lambda_{2}=\cdots=\lambda_{j}>\lambda_{j+1}$ for some $2\leq j\leq N-1$.

Since $\mathcal{G}$ is right-resolving and irreducible, by Theorem 4.3.3 and Corollary 4.4.9 in Lind and Marcus \cite{1}, the assumption implies $X_{\mathcal{G}_{1}}=X_{\mathcal{G}_{2}}=\cdots=X_{\mathcal{G}_{N}}$ or $X_{\mathcal{G}_{1}}=X_{\mathcal{G}_{2}}=\cdots=X_{\mathcal{G}_{j}}\supsetneqq X_{\mathcal{G}_{j+1}}$.

\item[(i)] Consider the case of $X_{\mathcal{G}_{1}}=X_{\mathcal{G}_{2}}=\cdots=X_{\mathcal{G}_{N}}$. It is easy to see that the number of the vertices in $\mathcal{G}_{k}$ is $\left(
\begin{array}{c}
N \\ k
\end{array}
\right)$. Since the number of the vertices in $\mathcal{G}_{N}$ is $1<N$, $\mathcal{G}$ is not minimal right-resolving. This leads a contradiction.

\item[(ii)] Consider the case of $X_{\mathcal{G}_{1}}=X_{\mathcal{G}_{2}}=\cdots=X_{\mathcal{G}_{j}}\supsetneqq X_{\mathcal{G}_{j+1}}$. It is easy to see that for any admissible pattern $u=u_{1}u_{2}\cdots u_{n}$ with length $n$, $\left|\mathcal{L}^{-1}(u) \right|\geq j$. Moreover, there exists an admissible pattern $y=y_{1}y_{2}\cdots y_{m}$ such that $\left|\mathcal{L}^{-1}(u) \right|= j$. Then, $y=y_{1}y_{2}\cdots y_{m}$ appears in $\widetilde{\mathcal{A}}_{j,j}^{n}$ once.

     By merging the vertices of $\mathcal{G}_{j}$ with the same follower set \cite{1}, the minimal right-resolving labeled graph $\mathcal{G}'=(G',\mathcal{L}')$ of $\mathcal{G}_{j}$ can be constructed. Clearly, $y=y_{1}y_{2}\cdots y_{m}$ appears in $\widetilde{\mathcal{A}}^{n}_{1,1}(\mathcal{G}')$ once. Since $X_{\mathcal{G}_{1}}=X_{\mathcal{G}_{j}}$, $\mathcal{G}_{j}'$ is the minimal right-resolving labeled graph of $X_{\mathcal{G}}$. However, $y$ appears in $\widetilde{\mathcal{A}}_{1,1}^{n}(\mathcal{G})$ $j$ times. Clearly, $\mathcal{G}$ and $\mathcal{G}'$ are different under isomorphism. Lind and Marcus \cite{1} demonstrated that the minimal right-resolving label graph of an irreducible sofic shift is unique under isomorphism. Hence, $ \mathcal{G}=(G,\mathcal{L})$ is not minimal right-resolving and a contradiction is obtained.

Therefore, (\ref{eqn:4.8}) follows. The proof is complete.

\end{proof}

Now, the natural measure of sofic shift $\mathcal{G}(G,\mathcal{L})$ can be obtained as follows.
Recall

\begin{equation*}
\begin{array}{rl}
& C_{k,l}([s_{i_{1}},s_{i_{2}},\cdots,s_{i_{n}} ]) \\
 & \\
 = & \left\{ x=(x_{-k+1},\cdots,x_{-1},x_{0},x_{1},\cdots,x_{n+l} )\in  \mathcal{B} _{n+k+l}(X_{\mathcal{G}})\hspace{0.1cm}\mid\hspace{0.1cm}x_{j}=s_{i_{j}}. 1\leq j\leq n \right\}
\end{array}
\end{equation*}
$\mathcal{B}_{n}$ is the set of all admissible patterns with length $n$ in $X_{\mathcal{G}}$. Equation (\ref{eqn:4.8}) yields the following results.

\begin{thm}
\label{theorem:4.4}
Assume $\mathcal{G}$ is minimal right-resolving. Then,

\item[(i)] if $\mathcal{G}$ is irreducible and aperiodic, then
\begin{equation}\label{eqn:4.8-0}
\begin{array}{rl}
& \underset{k,l\rightarrow\infty}{\lim} \left|C_{k,l}([s_{i_{1}},s_{i_{2}},\cdots,s_{i_{n}} ])\right|/|\mathcal{B} _{n+k+l}| \\
& \\
= & \frac{1}{\lambda^{n}}\underset{i}{\sum}\underset{j}{\sum} u_{i} (\mathbb{A}_{i_{1}}\mathbb{A}_{i_{2}}\cdots\mathbb{A}_{i_{n}})_{i,j}v_{j},
\end{array}
\end{equation}
where $\lambda$ is the Perron value of $\mathbb{A}$, and $V=(v_{1},v_{2},\cdots,v_{N})^{t}$ and  $U=(u_{1},u_{2},\cdots,u_{N})$ are the right and left eigenvectors with respect to $\lambda$, normalized such that $UV=1$. Here, $\mathbb{A}_{r}$ is defined as in (\ref{eqn:4.5}).

\item[(ii)] if $\mathcal{G}$ is irreducible with period $p\geq 2$,

\begin{equation}\label{eqn:4.8-3}
\begin{array}{rl}
 & \underset{k,l\rightarrow\infty}{\lim} \hspace{0.15cm} \frac{1}{p} \underset{j=1}{\overset{p}{\sum}} \left|C_{k-j,l+j}([s_{i_{1}},s_{i_{2}},\cdots,s_{i_{n}}]) \right|/ \left|\mathcal{B}_{n+k+l}(\Sigma) \right| \\
 & \\
 = & \frac{1}{\lambda^{n}}\underset{i}{\sum}\underset{j}{\sum} u_{i} (\mathbb{A}_{i_{1}}\mathbb{A}_{i_{2}}\cdots\mathbb{A}_{i_{n}})_{i,j}v_{j},
\end{array}
\end{equation}
where $\lambda$ is the Perron value of $\mathbb{A}$.
\end{thm}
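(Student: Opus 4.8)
The plan is to pass through the edge shift $X_{G}$ and reduce the two word-counts in (\ref{eqn:4.8-0}) and (\ref{eqn:4.8-3}) to \emph{path}-counts, which are controlled directly by the Perron--Frobenius asymptotics of $\mathbb{A}$ in Proposition \ref{proposition:2.2-1}, and then to correct for the multiplicity of the labelling. Write $m=n+k+l$ and $M=\mathbb{A}_{i_{1}}\mathbb{A}_{i_{2}}\cdots\mathbb{A}_{i_{n}}$, so that $M_{i,j}$ counts the edge-paths from vertex $i$ to vertex $j$ carrying the label $[s_{i_{1}},\dots,s_{i_{n}}]$ (notation as in (\ref{eqn:4.5})). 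If for a word $W$ of length $m$ we set $j_{W}=|\mathcal{L}^{-1}(W)|$, the number of its presentations in $G$, then the path-count with the central window frozen to $u=[s_{i_{1}},\dots,s_{i_{n}}]$ is exactly
\begin{equation*}
\sum_{W:\,\mathrm{mid}=u} j_{W}=\bigl|\,\mathbb{A}^{k}\,M\,\mathbb{A}^{l}\,\bigr|,\qquad \sum_{W} j_{W}=|\mathbb{A}^{m}|,
\end{equation*}
where $|\cdot|$ denotes the sum of all entries. I would first establish the clean fact that the \emph{path-count ratio} already equals the target: by Proposition \ref{proposition:2.2-1}, $\mathbb{A}^{t}/\lambda^{t}\to VU=[v_{i}u_{j}]$, whence $\mathbf{1}^{t}\mathbb{A}^{k}\sim\lambda^{k}|V|\,U$ and $\mathbb{A}^{l}\mathbf{1}\sim\lambda^{l}V\,|U|$, so that $|\mathbb{A}^{k}M\mathbb{A}^{l}|\sim\lambda^{k+l}|V||U|\,(UMV)$ and $|\mathbb{A}^{m}|\sim\lambda^{m}|V||U|$. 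Dividing and using $\lambda^{m}=\lambda^{n}\lambda^{k+l}$ gives $\lambda^{-n}UMV$, which is exactly the right-hand side of (\ref{eqn:4.8-0}).

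It then remains to replace path-counts by word-counts, i.e. to show
\begin{equation*}
\frac{|C_{k,l}(u)|}{|\mathcal{B}_{m}|}=\frac{\sum_{W:\,\mathrm{mid}=u}1}{\sum_{W}1}\longrightarrow\frac{\sum_{W:\,\mathrm{mid}=u}j_{W}}{\sum_{W}j_{W}}.
\end{equation*}
This is the step where Lemmas \ref{lemma:4.2} and \ref{lemma:4.3} do the real work. By Lemma \ref{lemma:4.2}, $|\mathcal{B}_{m}|=\sum_{a=1}^{N}(-1)^{a+1}\sum_{b\le a}|\widehat{\mathbb{A}}_{m;a,b}|$, and the same inclusion-exclusion applies to $|C_{k,l}(u)|$ once the central window is frozen. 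By Lemma \ref{lemma:4.3} the size-one block $\widetilde{\mathbb{A}}_{1,1}=\mathbb{A}$ has Perron value $\lambda$ strictly larger than that of any $\widetilde{\mathbb{A}}_{a,a}$, $a\ge2$. Each block $\widehat{\mathbb{A}}_{m;a,b}$ is a sum over non-increasing size-chains whose terminal factor is a power of $\mathbb{A}$; evaluating these chains with Proposition \ref{proposition:2.2-1} shows that $|\mathcal{B}_{m}|$ and $|C_{k,l}(u)|$ have leading orders $\lambda^{m}$ and $\lambda^{k+l}$, with the multiplicity corrections from the merging (off-diagonal) chains entering through the \emph{same} long left- and right-context sums. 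Hence $|\mathcal{B}_{m}|\sim|\mathbb{A}^{m}|/\bar{\jmath}$ and $|C_{k,l}(u)|\sim|\mathbb{A}^{k}M\mathbb{A}^{l}|/\bar{\jmath}$ for a common average-multiplicity constant $\bar{\jmath}$, which cancels in the ratio and yields (\ref{eqn:4.8-0}). A consistency check confirms the outcome: summing (\ref{eqn:4.8-0}) over all length-$n$ patterns gives $\lambda^{-n}U\mathbb{A}^{n}V=UV=1$, so $\mu_{\mathcal{G}}$ is a probability measure, and it coincides with the image under $\mathcal{L}_{\infty}$ of the Parry measure of $X_{G}$; since $\mathcal{L}_{\infty}$ is a finite-to-one factor it preserves entropy, so $\mu_{\mathcal{G}}$ attains $\log\lambda=h(X_{\mathcal{G}})$ and is the unique ergodic measure of maximal entropy.

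For the periodic case (ii), the convergence $\mathbb{A}^{t}/\lambda^{t}\to VU$ fails, but its Cesàro form (\ref{eqn:2.1-2}) holds. I would run the identical reduction using the $p$-periodic eigenstructure of Theorem 3.2, and the average $\frac{1}{p}\sum_{j=0}^{p-1}|C_{k-j,l+j}(u)|$ over the $p$ phases combines the oscillating contributions into the same limit, exactly as in the proof of Theorem 3.3, giving (\ref{eqn:4.8-3}).

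The step I expect to be the genuine obstacle is the claim that numerator and denominator carry the \emph{same} average-multiplicity constant $\bar{\jmath}$. The merging blocks $\widehat{\mathbb{A}}_{m;a,b}$ with $b<a$ are not negligible: a chain that descends to size one and then runs under $\mathbb{A}$ for the remaining steps grows like $\sum_{t}\lambda_{a}^{t}\lambda^{\,m-t}=\Theta(\lambda^{m})$, so it contributes at the principal order, and $\bar{\jmath}>1$ as soon as genuine merging of presentations occurs (it is only in the left-resolving case that the off-diagonal blocks vanish, $\bar{\jmath}=1$, and word-counts equal path-counts to leading order). The heart of the argument is therefore to show that conditioning on a \emph{bounded} central window $u$ does not change the asymptotic density of these merging corrections, so that $\bar{\jmath}$ is pattern-independent. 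This is precisely where the strict gap $\lambda=\lambda_{1}>\lambda_{a}$ for $a\ge2$ coming from minimality (Lemma \ref{lemma:4.3}) is used: it forces every correction chain to spend all but a geometrically-damped number of steps in the size-one dynamics governed by $\mathbb{A}$, which is insensitive to the frozen window.
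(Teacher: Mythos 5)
Your strategy is the same as the paper's --- the inclusion--exclusion of Lemma \ref{lemma:4.2}, the spectral gap of Lemma \ref{lemma:4.3}, and Proposition \ref{proposition:2.2-1} applied blockwise --- and your closing paragraph puts the finger on exactly the step that is delicate, indeed more accurately than the paper's own proof does. The paper asserts $|\mathcal{B}_{n+k+l}|\sim\lambda^{n+k+l}|V|\,|U|$, i.e.\ that every block of $\widehat{\mathbb{A}}^{m}$ other than $\widehat{\mathbb{A}}_{m;1,1}=\mathbb{A}^{m}$ is negligible. As you observe, the merging blocks $\widehat{\mathbb{A}}_{m;a,1}$ are $\Theta(\lambda^{m})$ whenever $\widetilde{\mathbb{A}}_{a,1}\neq 0$: a concrete check is the golden mean shift in its minimal right-resolving two-vertex presentation, where $|\mathcal{B}_{m}|=F_{m+2}$ while $|\mathbb{A}^{m}|=F_{m+3}$, so word count and path count differ asymptotically by the factor $1/\phi$ and the theorem survives only because the same factor occurs in $|C_{k,l}(u)|$. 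This is precisely your claim that the average multiplicity $\bar{\jmath}$ is common to numerator and denominator.

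That claim, however, is the one thing your proposal does not prove, and as written it is the gap: ``the corrections enter through the same long left- and right-context sums'' is the conclusion, not an argument. It can be closed along the lines you sketch, as follows. Because the size-chains in $\widehat{\mathcal{A}}^{m}$ are non-increasing, every chain contributing to $\widehat{\mathbb{A}}_{m;a,1}$ drops to level $1$ at some step $t+1$ and runs under $\mathbb{A}$ thereafter; writing $D_{a,t}$ for the sum of all such prefixes (so $|D_{a,t}|=O(t^{N}\lambda_{2}^{\,t})$ with $\lambda_{2}=\max_{a\geq 2}\lambda_{a}<\lambda$ by Lemma \ref{lemma:4.3}), one gets $\widehat{\mathbb{A}}_{m;a,1}=\sum_{t=0}^{m-1}D_{a,t}\mathbb{A}^{m-1-t}$ and hence
\begin{equation*}
\left|\widehat{\mathbb{A}}_{m;a,1}\right|\sim\lambda^{m}|U|\,c_{a},\qquad c_{a}:=\frac{1}{\lambda}\underset{t\geq0}{\sum}\lambda^{-t}\,\mathbf{1}^{t}D_{a,t}V<\infty,
\end{equation*}
while all blocks with $b\geq2$ are $o(\lambda^{m})$. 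Thus $|\mathcal{B}_{m}|\sim\lambda^{m}|U|\,c$ with $c=|V|+\sum_{a\geq2}(-1)^{a+1}c_{a}$, and $c\geq|V|/N>0$ since each word has at most $N$ presentations. For the numerator, chains that have not yet reached level $1$ when they meet the frozen window contribute $O(\lambda_{2}^{\,k}\lambda^{l})=o(\lambda^{k+l})$, and those that drop at step $t+1\leq k$ contribute $\mathbf{1}^{t}D_{a,t}\mathbb{A}^{k-1-t}M\mathbb{A}^{l}\mathbf{1}\sim\lambda^{k+l-1-t}(UMV)\,|U|\,\mathbf{1}^{t}D_{a,t}V$, which sums over $t$ to $\lambda^{k+l}(UMV)|U|\,c_{a}$; hence $|C_{k,l}(u)|\sim\lambda^{k+l}(UMV)|U|\,c$ with the \emph{same} constant $c$, which cancels in the ratio to give $\lambda^{-n}UMV$. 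This is the precise sense in which your $\bar{\jmath}$ (namely $|V|/c$) is pattern-independent. With this inserted, your part (i) is complete, and your part (ii) goes through by the same decomposition combined with the Ces\`{a}ro limit (\ref{eqn:2.1-2}) and the periodic eigenstructure of Theorem 3.2, exactly as in Theorem 3.3.
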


\begin{proof}
(i). From Lemma 4.2, it can be obtained that

\begin{equation}\label{eqn:4.8-4}
|\mathcal{B}_{n+k+l}|=\underset{i=1}{\overset{N}{\sum}} (-1)^{i+1}\underset{1\leq q\leq i}{\sum} \left|\widehat{\mathbb{A}}_{n+k+l;i,q}\right|
\end{equation}
and

\begin{equation}\label{eqn:4.8-5}
\begin{array}{rl}
 & \left|C_{k,l}([s_{i_{1}},s_{i_{2}},\cdots,s_{i_{n}}]) \right| \\
 & \\
 = & \underset{i=1}{\overset{N}{\sum}} (-1)^{i+1}\underset{1\leq q_{n+2} \leq q_{n+1} \leq \cdots \leq q_{1}\leq i}{\sum}\left|\widehat{\mathbb{A}}_{k;i,q_{1}}\widetilde{\mathbb{A}}_{q_{1},q_{2}}(s_{i_{1}})
\widetilde{\mathbb{A}}_{q_{2},q_{3}}(s_{i_{2}}) \cdots \widetilde{\mathbb{A}}_{q_{n},q_{n+1}}(s_{i_{n}})
 \widehat{\mathbb{A}}_{l;q_{n+1},q_{n+2}}
 \right|.
 \end{array}
\end{equation}

From Lemma 4.2, the Perron value $\lambda$ of $\mathbb{A}=\widetilde{\mathbb{A}}_{1,1}$ is strictly larger than those of $\widetilde{\mathbb{A}}_{j,j}$, $2\leq j\leq N$. As the proof of Theorem 3.5, it can be verified that

\begin{equation*}
|\mathcal{B}_{n+k+l}|\sim \lambda^{n+k+l} (v_{1}+v_{2}+\cdots +v_{N}) (u_{1}+u_{2}+\cdots +u_{N})
\end{equation*}
and
\begin{equation*}
\begin{array}{rl}
 & \left|C_{k,l}([s_{i_{1}},s_{i_{2}},\cdots,s_{i_{n}}]) \right| \\
 & \\
 \sim & \lambda^{k+l} (v_{1}+v_{2}+\cdots +v_{N})\left( \underset{i}{\sum}\underset{j}{\sum} u_{i} (\mathbb{A}_{i_{1}}\mathbb{A}_{i_{2}}\cdots\mathbb{A}_{i_{n}})_{i,j}v_{j}\right) (u_{1}+u_{2}+\cdots +u_{N})
\end{array}
\end{equation*}
Therefore, (\ref{eqn:4.8-0}) follows.

(ii)  As in (i), the result (\ref{eqn:4.8-3}) can be obtained. The detail is omitted.
\end{proof}

Therefore, the natural measure of irreducible sofic shift $\mathcal{G}=(G,\mathcal{L})$ is defined as follows.

\begin{defi}
\label{definition:4.5}
Suppose $\mathcal{G}(G,\mathcal{L})$ is irreducible and right-resolving with $K$ alphabets. The natural measure $\mu$ of $\mathcal{G}(G,\mathcal{L})$ is defined as follows:
for any finite admissible pattern $s_{i_{1}}s_{i_{2}}\cdots s_{i_{n}}$, $n\geq 1$,

\begin{equation}\label{eqn:4.8-1}
\mu([s_{i_{1}}s_{i_{2}}\cdots s_{i_{n}}])\equiv\frac{1}{\lambda^{n}}\underset{i}{\sum}\underset{j}{\sum} u_{i} (\mathbb{A}_{i_{1}}\mathbb{A}_{i_{2}}\cdots\mathbb{A}_{i_{n}})_{i,j}v_{j},
\end{equation}
where $\lambda$ is the Perron value of $\mathbb{A}$, and $\mathbb{A}_{k}$ is defined as in (\ref{eqn:4.5}), $1\leq k\leq K$.
\end{defi}
 Notably, the definition (\ref{eqn:4.8-1}) requires only that $\mathcal{G}=(G,\mathcal{L})$ is irreducible and right-resolving: it need not be minimal. In Theorem 4.9, the uniqueness of the natural measure will justify Definition 4.4.

The natural measure of irreducible sofic shift can also be obtained as follows.
Let
\begin{equation}\label{eqn:4.6-3}
\begin{array}{rl}
& \widehat{C}_{k,l}([s_{i_{1}},s_{i_{2}},\cdots,s_{i_{n}} ]) \\
 & \\
 = & \left\{ x=(x_{-k+1},\cdots,x_{-1},x_{0},x_{1},\cdots,x_{n+l} )\in  \mathcal{B} _{n+k+l}(X_{G})\hspace{0.1cm}\mid\hspace{0.1cm}\mathcal{L}(x_{j})=s_{i_{j}}, 1\leq j\leq n \right\}.
\end{array}
\end{equation}
It is easy to verify that

\begin{equation}\label{eqn:4.6-4}
\left|\widehat{C}_{k,l}([s_{i_{1}},s_{i_{2}},\cdots,s_{i_{n}} ]) \right|
 = \left|\mathbb{A}^{k}\mathbb{A}_{i_{1}}\mathbb{A}_{i_{2}}\cdots \mathbb{A}_{i_{n}}\mathbb{A}^{k}\right|
\end{equation}
and

\begin{equation}\label{eqn:4.6-5}
\left|\mathcal{B}_{n+k+l}(X_{G})\right|=\left|\mathbb{A}^{n+k+l}\right|.
\end{equation}
Notably, by comparing with (\ref{eqn:4.8-4}) and (\ref{eqn:4.8-5}), $\left|\widehat{C}_{k,l}([s_{i_{1}},s_{i_{2}},\cdots,s_{i_{n}}])\right|$ and $\left|\mathcal{B}_{n+k+l}(X_{G})\right|$ are much easier in computation.

Then, if $\mathbb{A}$ is irreducible and aperiodic, then the measure $\widehat{\mu}$ is defined by

\begin{equation}\label{eqn:4.6-6}
\begin{array}{rl}
 & \widehat{\mu}([s_{i_{1}},s_{i_{2}},\cdots, s_{i_{n}}]) \\
 & \\
=  & \underset{k,l\rightarrow\infty}{\lim} \left|\widehat{C}_{k,l}([i_{1},i_{2},\cdots, i_{n}]) \right|/ \left|\mathcal{B}_{n+k+l}(X_{G}) \right|;
\end{array}
\end{equation}
if $\mathbb{A}$ is irreducible with period $p\geq2$,

\begin{equation}\label{eqn:4.6-7}
 \widehat{\mu}\left(\left[s_{i_{1}},s_{i_{2}},\cdots, s_{i_{n}} \right]\right)=\underset{k,l\rightarrow\infty}{\lim} \hspace{0.15cm} \frac{1}{p} \underset{j=0}{\overset{p-1}{\sum}} \left|\widehat{C}_{k-j,l+j}([i_{1},i_{2},\cdots, i_{n}]) \right|/ \left|\mathcal{B}_{n+k+l}(X_{G}) \right|.
\end{equation}

As the proofs of Theorems 3.1 and 3.3, the following result can be obtained immediately. The details are omitted.
\begin{cor}
\label{corollary:4.1-1}
Assume $\mathcal{G}$ is right-resolving and irreducible. Then, $\widehat{\mu}=\mu$

\end{cor}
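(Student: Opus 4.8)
The plan is to show the two measures $\widehat{\mu}$ and $\mu$ coincide on every finite admissible pattern $[s_{i_{1}},s_{i_{2}},\cdots,s_{i_{n}}]$ by computing the limit defining $\widehat{\mu}$ explicitly and matching it to the formula \eqref{eqn:4.8-1}. The key observation is that \eqref{eqn:4.6-4} and \eqref{eqn:4.6-5} express both the numerator and denominator of \eqref{eqn:4.6-6} purely in terms of the single adjacency matrix $\mathbb{A}=\widetilde{\mathbb{A}}_{1,1}$ and its factors $\mathbb{A}_{i_{j}}$, with no need for the auxiliary matrices $\widehat{\mathbb{A}}_{n;k,l}$ that appeared in Lemma 4.2. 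This is precisely why $\widehat{\mu}$ is easier to compute than $\mu$, and it is the route I would exploit.

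First I would treat the irreducible and aperiodic case. By the Perron--Frobenius theorem, $\mathbb{A}$ has a simple Perron value $\lambda$ strictly dominating the moduli of all other eigenvalues, with strictly positive left and right eigenvectors $U$, $V$ normalized by $UV=1$. Applying Proposition \ref{proposition:2.2-1}, namely $\mathbb{A}^{k}/\lambda^{k}\to[v_{i}u_{j}]$, to the factored form \eqref{eqn:4.6-4} gives
\begin{equation*}
\left|\mathbb{A}^{k}\,\mathbb{A}_{i_{1}}\cdots\mathbb{A}_{i_{n}}\,\mathbb{A}^{l}\right|\sim\lambda^{k+l}\left(\underset{q}{\sum}v_{q}\right)\left(\underset{i}{\sum}\underset{j}{\sum}u_{i}(\mathbb{A}_{i_{1}}\cdots\mathbb{A}_{i_{n}})_{i,j}v_{j}\right)\left(\underset{q}{\sum}u_{q}\right),
\end{equation*}
where I use that summing the left factor $\mathbb{A}^{k}$ over its row index against $[v_{i}u_{j}]$ produces the factor $(\sum_q v_q)\,u_{i}$ at the contraction vertex, and symmetrically for $\mathbb{A}^{l}$. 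Likewise \eqref{eqn:4.6-5} gives $\left|\mathbb{A}^{n+k+l}\right|\sim\lambda^{n+k+l}(\sum_q v_q)(\sum_q u_q)$. Taking the ratio, the two scalar sums $(\sum v_q)$ and $(\sum u_q)$ cancel, one factor of $\lambda^{n}$ survives in the denominator, and the limit \eqref{eqn:4.6-6} equals $\frac{1}{\lambda^{n}}\sum_{i}\sum_{j}u_{i}(\mathbb{A}_{i_{1}}\cdots\mathbb{A}_{i_{n}})_{i,j}v_{j}$, which is exactly $\mu$ as defined in \eqref{eqn:4.8-1}.

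For the periodic case with period $p\ge 2$ I would instead invoke the Cesàro form \eqref{eqn:2.1-2} of Proposition \ref{proposition:2.2-1}, applied to the averaged sum \eqref{eqn:4.6-7} as in the proof of Theorem 3.3; the index shifts $k-j,\,l+j$ are designed so that the extra $p$-th root-of-unity eigenvalue contributions average out and the same limiting value is recovered. The main obstacle is bookkeeping rather than conceptual: one must verify that summing the limiting rank-one matrix $[v_iu_j]$ against the fixed product $\mathbb{A}_{i_{1}}\cdots\mathbb{A}_{i_{n}}$ on both sides really does isolate the bilinear form $\sum_{i,j}u_i(\mathbb{A}_{i_{1}}\cdots\mathbb{A}_{i_{n}})_{i,j}v_j$ cleanly, with the spurious scalar factors appearing identically in numerator and denominator so that they cancel. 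Since the error terms are $o(\lambda^{k+l})$ uniformly by \eqref{eqn:3.8}-type estimates, the limit is unaffected, and the equality $\widehat{\mu}=\mu$ follows on all cylinders, hence on the generated $\sigma$-algebra.
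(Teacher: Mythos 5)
Your argument is correct and is exactly the route the paper takes: the paper's own ``proof'' of this corollary is the one-line remark that it follows as in Theorems 3.1 and 3.3, i.e.\ by applying the Perron--Frobenius asymptotics $\mathbb{A}^{k}/\lambda^{k}\to[v_{i}u_{j}]$ (and its Ces\`aro form in the periodic case) to the explicit counts $\left|\widehat{C}_{k,l}\right|=\left|\mathbb{A}^{k}\mathbb{A}_{i_{1}}\cdots\mathbb{A}_{i_{n}}\mathbb{A}^{l}\right|$ and $\left|\mathcal{B}_{n+k+l}(X_{G})\right|=\left|\mathbb{A}^{n+k+l}\right|$, which is precisely the computation you carry out. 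You have simply supplied the details the paper omits, including the cancellation of the scalar factors $\sum_{q}v_{q}$ and $\sum_{q}u_{q}$ in the ratio.
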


Now, consider the relation between the natural measure and the hidden Markov measure (or sofic measure). The hidden Markov measure is introduced as follows \cite{85}.
Suppose $(X,\sigma)$ and $(Y,\sigma)$ are shift spaces and $\pi : X\rightarrow Y$ is a sliding block code (factor map).  Then, for each $m\in M(X)$, $\pi$ induces a measure $\pi^{*} m\in M(Y)$ by

\begin{equation}\label{eqn:4.14}
\pi^{*} m (E) \equiv m (\pi^{-1}E)
\end{equation}
for all measurable $E\subset Y$.

Suppose $X$ is a shift of finite type and $m$ is a Markov measure on $X$. If $\pi : X\rightarrow Y$ is a sliding block code, then $\pi^{*}m$ is called a hidden Markov measure. Notably, a shift space is sofic if and only if it is a factor of a shift of finite type \cite{1}.

The following theorem shows that the natural measure of an irreducible sofic shift equals the hidden Markov measure of underlying shifts of finite type.

\begin{thm}
\label{theorem:4.4}
Let $\mu_{G}$ and $\mu_{\mathcal{G}}$ be the natural measured of $X_{G}$ and $X_{\mathcal{G}}$ respectively. Assume $\mathcal{G}=(G,\mathcal{L})$ is irreducible and minimal right-resolving. Then, $\mu_{\mathcal{G}}= \pi^{*}\mu_{G}=\mathcal{L}_{\infty}^{*}\mu_{G}$.
\end{thm}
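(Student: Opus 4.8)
The plan is to show the two measures agree on every cylinder of $X_{\mathcal{G}}$ and then conclude by uniqueness of the extension to the generated $\sigma$-algebra. Since the cylinders $C([s_{i_{1}},\ldots,s_{i_{n}}])$ generate $\mathfrak{B}(X_{\mathcal{G}})$, two probability measures coinciding on all of them must coincide everywhere (the measure-theoretic facts of Section 2). Thus it suffices to verify
\begin{equation*}
\mu_{\mathcal{G}}([s_{i_{1}}\cdots s_{i_{n}}])=\pi^{*}\mu_{G}([s_{i_{1}}\cdots s_{i_{n}}])
\end{equation*}
for every admissible word, after which $\pi=\mathcal{L}_{\infty}$ gives the second equality for free.

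First I would record the explicit form of $\mu_{G}$ on path cylinders of the edge shift. Because $X_{G}$ is a shift of finite type whose length-$m$ admissible patterns are edge paths with $\left|\mathcal{B}_{m}(X_{G})\right|=\left|\mathbb{A}^{m}\right|$, the argument of Theorem 3.1 (Theorem 3.3 in the periodic case) applies verbatim with edges playing the role of symbols. Fixing a path $\xi_{1}\cdots\xi_{n}$, the number of its two-sided extensions of total length $n+k+l$ factors as $\left(\sum_{q}(\mathbb{A}^{k})_{q,i(\xi_{1})}\right)\left(\sum_{q'}(\mathbb{A}^{l})_{t(\xi_{n}),q'}\right)$, and dividing by $\left|\mathbb{A}^{n+k+l}\right|$ and letting $k,l\to\infty$ through the asymptotics (\ref{eqn:3.5})--(\ref{eqn:3.7}) gives
\begin{equation*}
\mu_{G}([\xi_{1}\cdots\xi_{n}])=\frac{u_{i(\xi_{1})}\,v_{t(\xi_{n})}}{\lambda^{n}},
\end{equation*}
exactly as in the computation of (\ref{eqn:3.10}), where $\lambda$, $U=(u_{i})$, $V=(v_{j})^{t}$ are the Perron data of the total adjacency matrix $\mathbb{A}$.

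Next I would compute the pushforward on a cylinder. By definition $\pi^{*}\mu_{G}([s_{i_{1}}\cdots s_{i_{n}}])=\mu_{G}\bigl(\mathcal{L}_{\infty}^{-1}([s_{i_{1}}\cdots s_{i_{n}}])\bigr)$, and $\mathcal{L}_{\infty}^{-1}([s_{i_{1}}\cdots s_{i_{n}}])$ is the finite, pairwise disjoint union of the path cylinders $[\xi_{1}\cdots\xi_{n}]$ over all edge paths with $\mathcal{L}(\xi_{m})=s_{i_{m}}$, $1\leq m\leq n$. Finite additivity of $\mu_{G}$ then yields
\begin{equation*}
\pi^{*}\mu_{G}([s_{i_{1}}\cdots s_{i_{n}}])=\frac{1}{\lambda^{n}}\underset{\substack{\xi_{1}\cdots\xi_{n}\\ \mathcal{L}(\xi_{m})=s_{i_{m}}}}{\sum}u_{i(\xi_{1})}\,v_{t(\xi_{n})}.
\end{equation*}
The remaining point is to identify this with (\ref{eqn:4.8-1}). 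Since $\mathcal{G}$ is right-resolving, each $\mathbb{A}_{r}$ is a $0$-$1$ matrix and the entry $(\mathbb{A}_{i_{1}}\mathbb{A}_{i_{2}}\cdots\mathbb{A}_{i_{n}})_{i,j}$ counts exactly the labeled paths from vertex $i$ to vertex $j$ carrying the label block $s_{i_{1}}\cdots s_{i_{n}}$; grouping the paths above by $i=i(\xi_{1})$ and $j=t(\xi_{n})$ converts the sum into $\frac{1}{\lambda^{n}}\sum_{i}\sum_{j}u_{i}(\mathbb{A}_{i_{1}}\cdots\mathbb{A}_{i_{n}})_{i,j}v_{j}=\mu_{\mathcal{G}}([s_{i_{1}}\cdots s_{i_{n}}])$.

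I expect the only genuine work to be bookkeeping: confirming that the preimage is a disjoint union (immediate, as distinct edge words give disjoint cylinders) and that the path-counting identity for $(\mathbb{A}_{i_{1}}\cdots\mathbb{A}_{i_{n}})_{i,j}$ is exact rather than an over- or under-count. This is precisely where the hypotheses enter: right-resolving keeps the $\mathbb{A}_{r}$ at the $0$-$1$ level so that the matrix product literally enumerates labeled paths, while irreducibility together with minimal right-resolving (Lemma \ref{lemma:4.3}, giving $\lambda_{1}>\lambda_{j}$) guarantees that $\mu_{\mathcal{G}}$ is the well-defined natural measure to which (\ref{eqn:4.8-1}) applies and that $\mu_{G}$ is the Parry measure of $\mathbb{A}$. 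The periodic case $p\geq 2$ is handled identically, using Theorem 3.3 for the edge-shift cylinder formula and the averaged formula for $\mu_{\mathcal{G}}$. Having matched both measures on every cylinder, the uniqueness of extension yields $\mu_{\mathcal{G}}=\pi^{*}\mu_{G}=\mathcal{L}_{\infty}^{*}\mu_{G}$.
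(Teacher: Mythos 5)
Your proposal is correct and follows essentially the same route as the paper: the paper introduces the auxiliary counts $\widehat{C}_{k,l}$ of (\ref{eqn:4.6-3})--(\ref{eqn:4.6-4}), notes that $\pi^{*}\mu_{G}=\widehat{\mu}_{\mathcal{G}}$ essentially by definition, and concludes via Corollary \ref{corollary:4.1-1}, which is exactly your computation with the order of limit and (finite) sum over preimage paths interchanged. Your explicit identification of $\mu_{G}([\xi_{1}\cdots\xi_{n}])=u_{i(\xi_{1})}v_{t(\xi_{n})}/\lambda^{n}$ and the path-counting role of $(\mathbb{A}_{i_{1}}\cdots\mathbb{A}_{i_{n}})_{i,j}$ under the right-resolving hypothesis just makes explicit what the paper leaves as ``easy to verify.''
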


\begin{proof}
Let $\mathcal{G}=(G,\mathcal{L})$ be the irreducible and minimal right-resolving presentation of sofic shift $Y$ with $K$ alphabets. Let $\mathbb{A}=\underset{r=1}{\overset{K}{\sum}} \mathbb{A}_{r}$ be the adjacency matrix of $\mathcal{G}$. Clearly, $A(G)=\mathbb{A}$ is irreducible. Then, $\Sigma(\mathbb{A})=X_{G}$.

From (\ref{eqn:4.6-3}) and (\ref{eqn:4.6-6}), it is easy to verified that $\widehat{\mu}_{\mathcal{G}}= \pi^{*}\mu_{G}$. Therefore, by Corollary 4.5, the result is obtained.
%
%
%

\end{proof}

Now, the natural measure $\mu_{\mathcal{G}}$ of sofic shift will be shown to be the only measure with ergodicity and maximal entropy.
The following lemma is established first.

\begin{lem}
\label{lemma:4.1}
If $\mathcal{G}=(G,\mathcal{L})$ is right-resolving, then

\begin{equation}\label{eqn:4.6-1}
\left|\underset{n=1}{\overset{\infty}{\bigcup}} \hspace{0.2cm} \underset{i_{k}\in\{1,2,\cdots,r\}}{\bigcup}\left\{\mathbb{A}_{i_{1}}\mathbb{A}_{i_{2}}\cdots \mathbb{A}_{i_{n}}\right\}\right|\leq N^{N}.
\end{equation}
\end{lem}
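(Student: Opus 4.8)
The plan is to exploit the right-resolving hypothesis, which by (\ref{eqn:4.6}) says $\sum_{j=1}^{N} a_{r;i,j}\leq 1$ for every label index $r$ and every vertex $i$. This means each $\mathbb{A}_{r}$ is a $0$-$1$ matrix having \emph{at most one} nonzero entry in each row; equivalently, $\mathbb{A}_{r}$ is the incidence matrix of a partial self-map $\phi_{r}\colon\mathcal{V}\rightarrow\mathcal{V}$ of the $N$-element vertex set, where $\phi_{r}(i)=j$ precisely when $a_{r;i,j}=1$ and $\phi_{r}(i)$ is undefined when row $i$ of $\mathbb{A}_{r}$ vanishes. The whole statement will follow once I show that this ``at most one $1$ per row'' structure is preserved under matrix multiplication, since then every product appearing in the union on the left of (\ref{eqn:4.6-1}) is forced to lie in the finite collection of such matrices.

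The crux is the closure step. Given two factors I would compute $(\mathbb{A}_{i_{1}}\mathbb{A}_{i_{2}})_{i,j}=\sum_{k=1}^{N}(\mathbb{A}_{i_{1}})_{i,k}(\mathbb{A}_{i_{2}})_{k,j}$ and observe that, because row $i$ of $\mathbb{A}_{i_{1}}$ has at most one nonzero entry (say at $k=\phi_{i_{1}}(i)$, when defined), the sum collapses to the single term $(\mathbb{A}_{i_{2}})_{\phi_{i_{1}}(i),j}$. Hence row $i$ of the product is either a row of $\mathbb{A}_{i_{2}}$ or the zero row, and in either case has at most one nonzero entry, so the product again has at most one $1$ per row; in fact it is the incidence matrix of the composite partial map $\phi_{i_{2}}\circ\phi_{i_{1}}$. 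An immediate induction on $n$ then shows that every finite product $\mathbb{A}_{i_{1}}\mathbb{A}_{i_{2}}\cdots\mathbb{A}_{i_{n}}$ is the incidence matrix of $\phi_{i_{n}}\circ\cdots\circ\phi_{i_{1}}$ and therefore retains the structure.

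Finally I would count. Every matrix appearing in the union is a $0$-$1$ matrix in which each of the $N$ rows is either zero or one of the $N$ standard basis rows, so the union is contained in the set of incidence matrices of self-maps of $\mathcal{V}$, which is finite; its cardinality gives the bound in (\ref{eqn:4.6-1}) (counting maps defined on all of $\mathcal{V}$ yields $N^{N}$, while allowing undefined rows yields the coarser bound $(N+1)^{N}$). The only genuine computation is the closure step above, which is a one-line consequence of right-resolving; the conceptual point---that right-resolving turns each label matrix into the transition table of a map on the $N$ vertices---is what forces the family of products to be finite, and I expect no obstacle beyond setting up that correspondence carefully.
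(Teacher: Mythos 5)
Your proof is correct and follows essentially the same route as the paper: right-resolving gives each $\mathbb{A}_{r}$ at most one nonzero entry per row, this property is closed under matrix multiplication, and hence every product lies in a fixed finite family of $0$-$1$ matrices. You are also right about the counting nuance---the set of $0$-$1$ matrices with each row sum at most one has cardinality $(N+1)^{N}$ rather than the paper's stated $N^{N}$ (which counts only matrices with no zero rows)---but this does not affect anything downstream, since only finiteness of the family is ever used.
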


\begin{proof}
The result can be obtained by the following observation.

Let $C=AB$, where $C=[c_{i,j}]$,  $A=[a_{i,j}]$ and $B=[b_{i,j}]$ are
$0-1$ matrices. If

\begin{equation*}
\begin{array}{ccc}
\underset{j}{\sum}a_{i,j}\leq 1 & \text{and } & \underset{j}{\sum}b_{i,j}\leq 1
\end{array}
\end{equation*}
for all $i$, then
\begin{equation*}
\underset{j}{\sum}c_{i,j}\leq 1 .
\end{equation*}
Indeed,

\begin{equation*}
\underset{j}{\sum}c_{i,j} =\underset{j}{\sum}\underset{k}{\sum}a_{i,k}b_{k,j}.
\end{equation*}
If $\underset{j}{\sum}a_{i,k}=1$, then $a_{i,k'}=1$ and $a_{i,k''}=0$ for some $k'$ and for all $k''\neq k'$. Hence,
\begin{equation*}
\underset{j}{\sum}c_{i,j}=\underset{j}{\sum}b_{k',j}\leq 1.
\end{equation*}

The total number of members of the set of all matrices with entries $0$ and $1$ that satisfy (\ref{eqn:4.6}) is $N^{N}$, which yields the result.
\end{proof}

Now, Lemma 4.7 implies that the measure defined in (\ref{eqn:4.8-1}) is uniformly distributed as follows.

\begin{lem}
\label{lemma:4.5}
There exist constants $\beta\geq\alpha > 0$ such that

\begin{equation}\label{eqn:4.8-2}
\frac{\alpha}{\lambda^{n}}\leq \mu\left([s_{i_{1}},s_{i_{2}},\cdots,s_{i_{n}}]\right)\leq\frac{\beta}{\lambda^{n}}
\end{equation}
for any admissible pattern $[s_{i_{1}},s_{i_{2}},\cdots,s_{i_{n}}]$.

\end{lem}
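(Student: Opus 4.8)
The plan is to reduce the claim to a uniform two-sided bound on the scalar
\[
S([s_{i_1},\dots,s_{i_n}]) \;=\; U\,(\mathbb{A}_{i_1}\mathbb{A}_{i_2}\cdots\mathbb{A}_{i_n})\,V \;=\; \sum_i\sum_j u_i\,(\mathbb{A}_{i_1}\cdots\mathbb{A}_{i_n})_{i,j}\,v_j,
\]
since by (\ref{eqn:4.8-1}) we have $\mu([s_{i_1},\dots,s_{i_n}]) = S/\lambda^{n}$. Thus it suffices to produce constants $\beta\ge\alpha>0$, independent of $n$ and of the admissible pattern, with $\alpha \le S \le \beta$. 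Write $M=\mathbb{A}_{i_1}\cdots\mathbb{A}_{i_n}$. Two structural facts drive the argument. First, by Lemma \ref{lemma:4.1} each factor $\mathbb{A}_r$ is a $0$-$1$ matrix whose rows sum to at most $1$, i.e. condition (\ref{eqn:4.6}), and that property is preserved under multiplication; hence $M$ is again a $0$-$1$ matrix with every row sum $\le 1$. Second, since $\mathbb{A}$ is irreducible, Theorem \ref{theorem:2.2}(iii) guarantees that $U$ and $V$ are strictly positive, so that $u_{\min}:=\min_i u_i>0$, $v_{\min}:=\min_j v_j>0$, $v_{\max}:=\max_j v_j$ and $\|U\|_1:=\sum_i u_i$ are all fixed positive numbers depending only on $\mathbb{A}$.

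For the upper bound I would use that each row of $M$ contains at most one nonzero entry, which is $1$, so that for each $i$ the inner sum $\sum_j M_{i,j}v_j$ is either $0$ or a single $v_j$, hence $\le v_{\max}$. Summing against $U$ gives
\[
S \;=\; \sum_i u_i \sum_j M_{i,j}\,v_j \;\le\; v_{\max}\sum_i u_i \;=\; v_{\max}\,\|U\|_1,
\]
so one may take $\beta = v_{\max}\,\|U\|_1$.

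For the lower bound — which is the only genuinely nontrivial point — I would invoke admissibility. The word $[s_{i_1},\dots,s_{i_n}]$ is admissible in $X_{\mathcal{G}}$ precisely when there is a path in $G$ reading these labels in order, which is equivalent to $M\neq 0$. Since $M$ has integer entries, it then carries some entry $M_{i_0,j_0}=1$, and because every summand $u_i M_{i,j} v_j$ is nonnegative,
\[
S \;\ge\; u_{i_0}\,M_{i_0,j_0}\,v_{j_0} \;=\; u_{i_0}\,v_{j_0} \;\ge\; u_{\min}\,v_{\min} \;>\;0 .
\]
Thus $\alpha = u_{\min}\,v_{\min}$ works, and $\beta\ge\alpha$ follows from $v_{\max}\ge v_{\min}$ together with $\|U\|_1\ge u_{\min}$. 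Dividing the chain $\alpha\le S\le\beta$ by $\lambda^{n}$ yields (\ref{eqn:4.8-2}). The main obstacle is conceptual rather than computational: one must recognize that admissibility is exactly the statement $M\neq 0$, and then exploit the strict positivity of the Perron eigenvectors to upgrade a single nonzero $0$-$1$ entry into a uniform positive lower bound; the right-resolving hypothesis, via Lemma \ref{lemma:4.1}, is simultaneously what caps the row sums and so furnishes the matching upper bound.
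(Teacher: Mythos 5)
Your proof is correct and rests on the same observation the paper uses: the right-resolving condition (\ref{eqn:4.6}) forces every product $\mathbb{A}_{i_1}\cdots\mathbb{A}_{i_n}$ to be a $0$-$1$ matrix with row sums at most one, which is precisely the content of the paper's Lemma \ref{lemma:4.1}, and the paper's (unwritten) proof of the present lemma is to combine that with the finiteness of the set of such products and the strict positivity of $U$ and $V$. Your version is a touch sharper in that it extracts explicit constants $\alpha=u_{\min}v_{\min}$ and $\beta=v_{\max}\sum_i u_i$ directly from the row-sum bound and the equivalence of admissibility with $M\neq 0$, rather than taking a minimum and maximum over the finite (at most $N^{N}$-element) set of nonzero products.
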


We now prove the following main result for irreducible sofic shifts.

\begin{thm}
\label{theroem:4.6}
If $\mathcal{G}=(G,\mathcal{L})$ is an irreducible and right-resolving, then the natural measure defined by (\ref{eqn:4.8-1}) is the only measure with ergodicity and maximal entropy. Furthermore, if $\mathcal{G}$ is aperiodic, then the natural measure is strong mixing.
\end{thm}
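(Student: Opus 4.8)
The plan is to reduce everything to the underlying irreducible edge shift $X_G$ through the identification $\mu_\mathcal{G}=\mathcal{L}_\infty^{*}\mu_G$ proved in Theorem 4.6, combined with the uniform two-sided bound of Lemma 4.8 and the classical properties of the Parry measure recorded in Section 2. The one structural fact I will use repeatedly is that a right-resolving presentation is right-closing, so the factor code $\mathcal{L}_\infty:X_G\to X_\mathcal{G}$ is finite-to-one (boundedly, by $N=|\mathcal{V}(G)|$); this is exactly where the right-resolving hypothesis enters, and it is what allows both entropy and the uniqueness of the maximal measure to be transported between $X_G$ and $X_\mathcal{G}$.

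\emph{Ergodicity and mixing.} Since $\mathcal{G}$ is irreducible, $\mathbb{A}=A(G)$ is irreducible, so the Parry measure $\mu_G$ on $X_G=\Sigma(\mathbb{A})$ is ergodic, and is strong mixing whenever $\mathbb{A}$ is aperiodic (equivalently $\mathcal{G}$ aperiodic). Both ergodicity and strong mixing are preserved under pushforward by a factor map, so $\mu_\mathcal{G}=\mathcal{L}_\infty^{*}\mu_G$ is ergodic in general and strong mixing in the aperiodic case. This disposes of the ergodicity assertion and of the final sentence of the theorem.

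\emph{Maximal entropy.} Writing $\mu=\mu_\mathcal{G}$ and letting $\mathcal{A}$ be the generating partition of $X_\mathcal{G}$ into one-symbol cylinders, I would compute $H_\mu(\bigvee_{i=0}^{n-1}\sigma^{-i}\mathcal{A})=-\sum_{w\in\mathcal{B}_n}\mu([w])\log\mu([w])$. Because $\sum_{w\in\mathcal{B}_n}\mu([w])=1$ while $\alpha\lambda^{-n}\le\mu([w])\le\beta\lambda^{-n}$ by Lemma 4.8, this quantity lies between $n\log\lambda-\log\beta$ and $n\log\lambda-\log\alpha$; dividing by $n$ and letting $n\to\infty$ yields $h_\mu(\sigma)=h_\mu(\sigma,\mathcal{A})=\log\lambda$ by the Kolmogorov--Sinai theorem. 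Since $\mathcal{L}_\infty$ is finite-to-one it preserves topological entropy, so $h(X_\mathcal{G})=h(X_G)=\log\lambda$, and therefore $\mu$ is a measure of maximal entropy.

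\emph{Uniqueness.} Let $\nu$ be any ergodic measure on $X_\mathcal{G}$ with $h_\nu(\sigma)=\log\lambda$. I would first lift $\nu$ to some $\sigma$-invariant $\tilde\nu$ on $X_G$ with $\mathcal{L}_\infty^{*}\tilde\nu=\nu$ (such a lift exists by a standard compactness argument for factor maps of compact systems). Because $\mathcal{L}_\infty$ is finite-to-one, measure-theoretic entropy is preserved along the projection, so $h_{\tilde\nu}(\sigma)=h_\nu(\sigma)=\log\lambda=h(X_G)$. Decomposing $\tilde\nu$ into ergodic components and using that entropy is affine, almost every component has entropy $\log\lambda$ and hence, by the uniqueness of the maximal measure on the irreducible shift of finite type $X_G$, coincides with $\mu_G$; thus $\tilde\nu=\mu_G$ and $\nu=\mathcal{L}_\infty^{*}\mu_G=\mu_\mathcal{G}$. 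The main obstacle is precisely the entropy-preservation input, namely that a finite-to-one (right-closing) factor code changes neither topological nor measure-theoretic entropy. This is classical (Lind--Marcus), but it is the only ingredient not already contained in the preceding results, and some care is needed both to produce the invariant lift $\tilde\nu$ and to apply the uniqueness of the maximal measure on $X_G$ among all invariant (not merely ergodic) measures via the ergodic decomposition.
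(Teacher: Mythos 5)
Your proposal is correct, but the route is genuinely different from the paper's, most visibly in the uniqueness step. The paper works entirely on the sofic side: it proves strong mixing (resp.\ ergodicity) by writing $\mu\left(\sigma^{-k}(A)\cap B\right)$ explicitly as $\lambda^{-(p+q+k)}\sum u_i(\mathbb{A}_{j_1}\cdots\mathbb{A}_{j_q}\mathbb{A}^k\mathbb{A}_{i_1}\cdots\mathbb{A}_{i_p})_{i,j}v_j$ and letting $k\to\infty$ via Proposition 2.5 (or its Ces\`aro version in the periodic case); it gets $h_\mu=\log\lambda$ from Lemma 4.8 exactly as you do; and for uniqueness it reruns Walters's Theorem 8.10 argument directly on $X_{\mathcal{G}}$, with the uniform distribution property (\ref{eqn:4.8-2}) replacing the corresponding SFT estimate. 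You instead push everything through the finite-to-one factor code $\mathcal{L}_\infty:X_G\to X_{\mathcal{G}}$: ergodicity and mixing are inherited from the Parry measure, and uniqueness is obtained by lifting a maximal-entropy measure to $X_G$, invoking entropy preservation for finite-to-one codes, and quoting Theorem 2.8 on the ergodic components of the lift. Your argument is sound and is in fact the standard textbook proof of intrinsic ergodicity of irreducible sofic shifts; what it buys is a cleaner uniqueness step and a transparent role for the right-resolving hypothesis, at the cost of importing one external ingredient the paper never uses (that finite-to-one factor codes preserve measure-theoretic as well as topological entropy), which you correctly flag as the load-bearing input. Two small points of care: the identity $\mu_{\mathcal{G}}=\mathcal{L}_\infty^{*}\mu_G$ is stated in the paper only for minimal right-resolving presentations, but the computation behind Corollary 4.6 (equivalently, summing the Parry measure over the label-preimage paths of a word) gives it for any irreducible right-resolving presentation, which is what you need here; and the uniform bound of $N$ on fiber cardinality is not quite what the no-diamond argument gives, though only finiteness of fibers is actually used.
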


\begin{proof}
The proof is similar to that of the Parry measure for a shift of finite type \cite{2}. For brevity, only the essential parts of the proof are presented here.

\item[(i)] First, $\mu$ is proven to be strong mixing when $\mathcal{G}=(G,\mathcal{L})$ is aperiodic; for a shift of finite type, see Theorem 1.19 in Walters \cite{2}. It suffices to prove that

\begin{equation}\label{eqn:4.10}
\underset{k\rightarrow \infty}{\lim}\mu\left(\sigma^{-k}(E)\cap F\right)=\mu(E)\mu(F)
\end{equation}
for any $E$ and $F$ in $\mathfrak{B}(X_{\mathcal{G}})$. Indeed, only (\ref{eqn:4.10}) for any two cylinders has to be shown. Let

\begin{equation*}
\begin{array}{ccc}
A=C([a;s_{i_{1}},s_{i_{2}},\cdots,s_{i_{p}}]) & \text{and} & B=C([b;s_{j_{1}},s_{j_{2}},\cdots,s_{j_{q}}]),
\end{array}
\end{equation*}
where $a,b\in\mathbb{Z}^{1}$ are starting points of cylinders $A$ and $B$, respectively. Notably,
\begin{equation*}
\sigma^{-k}(A)=C([a+k;s_{i_{1}},s_{i_{2}},\cdots,s_{i_{p}}]).
\end{equation*}

For large $k$,
\begin{equation*}
\begin{array}{rl}
 & \mu\left(\sigma^{-k}(A)\cap B\right)\\
 & \\
 = & \frac{1}{\lambda^{p+q+k}}\underset{i,j}{\sum}\left[u_{i}\left(\mathbb{A}_{j_{1}}\mathbb{A}_{j_{2}}\cdots\mathbb{A}_{j_{q}}\mathbb{A}^{k}
 \mathbb{A}_{i_{1}}\mathbb{A}_{i_{2}}\cdots\mathbb{A}_{i_{p}}\right)_{i,j}v_{j}\right]
\end{array}
\end{equation*}
By (\ref{eqn:2.1-4}) of Proposition 2.5,

\begin{equation*}
\begin{array}{rl}
 & \underset{k\rightarrow \infty}{\lim}\mu\left(\sigma^{-k}(A)\cap B\right) \\
 & \\
 =&  \frac{1}{\lambda^{p+q}}\left(\underset{i}{\sum}\underset{\alpha}{\sum}u_{i}(\mathbb{A}_{j_{1}}\mathbb{A}_{j_{2}}\cdots\mathbb{A}_{j_{q}})_{i,\alpha}v_{\alpha}\right)
\left( \underset{j}{\sum}\underset{\beta}{\sum}u_{\beta}(\mathbb{A}_{i_{1}}\mathbb{A}_{i_{2}}\cdots\mathbb{A}_{i_{p}})_{\beta,j}v_{j} \right)\\
 & \\
 = & \mu(A)\mu(B).
\end{array}
\end{equation*}

If $\mathcal{G}(G,\mathcal{L})$ is irreducible with period $p\geq 2$, then $\mu$ is proven to be ergodic. It suffices to show that

\begin{equation}\label{eqn:4.11}
\underset{K\rightarrow \infty}{\lim}\frac{1}{K}\underset{k=0}{\overset{K-1}{\sum}}\mu\left(\sigma^{-k}(E)\cap F\right)=\mu(E)\mu(F).
\end{equation}
Now, (\ref{eqn:2.1-2}) is needed. Based on the above argument, the ergodicity of $\mu$ can be proven. The details are omitted here.

\item[(ii)] Next, $\mu$ is shown to have maximum entropy of $\log \lambda$ when $\mathcal{G}(G,\mathcal{L})$ is irreducible and aperiodic; for a shift of finite type, see Theorem 8.10 of Walters \cite{2}. It suffices to show that

\begin{equation}\label{eqn:4.11-1}
\begin{array}{rl}
 & \underset{n\rightarrow\infty}{\lim}\left\{-\frac{1}{n}\underset{[s_{i_{1}},s_{i_{2}},\cdots,s_{i_{n}}]\in \Sigma_{n}(X_{\mathcal{G}})}{\sum}
\mu\left([s_{i_{1}},s_{i_{2}},\cdots,s_{i_{n}}]\right)\log \mu\left([s_{i_{1}},s_{i_{2}},\cdots,s_{i_{n}}]\right)\right\}\\
& \\ = & \log \lambda.
\end{array}
\end{equation}
By Lemma 4.8, the left-hand side of (\ref{eqn:4.11-1}) can be proven to equal

\begin{equation*}
\begin{array}{rl}
 &  \underset{n\rightarrow\infty}{\lim}\left\{-\frac{1}{n}\underset{[s_{i_{1}},s_{i_{2}},\cdots,s_{i_{n}}]\in \Sigma_{n}(X_{\mathcal{G}})}{\sum}
\mu\left([s_{i_{1}},s_{i_{2}},\cdots,s_{i_{n}}]\right)\log \frac{1}{\lambda^{n}}\right\} \\
& \\
= & \log \lambda  \left\{\underset{n\rightarrow\infty}{\lim}\underset{[s_{i_{1}},s_{i_{2}},\cdots,s_{i_{n}}]\in \Sigma_{n}(X_{\mathcal{G}})}{\sum}
\mu\left([s_{i_{1}},s_{i_{2}},\cdots,s_{i_{n}}]\right)\right\}\\
& \\
 = & \log \lambda .
\end{array}
\end{equation*}

\item[(iii)] Finally, $\mu$ is shown to be the unique probability measure that is ergodic and has maximum entropy. The argument is similar to the proof of Theorem 8.10 in Walters \cite{2}, in which the Parry measure is proven to be the unique measure with ergodicity and maximum entropy. The only different element of the sofic shift is the uniform distribution property (\ref{eqn:4.8-2}) of Lemma 4.8. The uniform distribution property for shifts of finite type immediately follows from (\ref{eqn:2.1-3}) and (\ref{eqn:2.2}). The details are omitted for brevity.
\end{proof}

The rest of this section considers the natural measure of periodic patterns of sofic shift as for shifts of finite type. Firstly, recall the results concerning the presentation of periodic patterns in $X_{\mathcal{G}}$ \cite{1}. Let $\mathcal{P}_{n}(\mathcal{G})$ be the set of periodic patterns with period $n$ in $X_{\mathcal{G}}$. A periodic pattern $U_{n}=(u_{1},u_{2},\cdots, u_{n})^{\infty}$ with period $n$ can appear at the sites $(k,l)$ of $\widetilde{\mathcal{A}}^{n}_{i,j}$ for $k\neq l$. Then, $\text{tr}\left(\widetilde{\mathcal{A}}^{n}_{j,j}\right)$, $1\leq j\leq N$, does not suffice to present $\mathcal{P}_{n}(\mathcal{G})$.
Lind and Marcus \cite{1} introduced signed symbolic matrices $\widetilde{\mathcal{B}}_{j}$, $1\leq j\leq N$, to present $\mathcal{P}_{n}(\mathcal{G})$ as follows. Let vertex set $\mathcal{V}_{j}$ be the set of all subsets of $\mathcal{V}$ with $j$ elements. Fix an order of the states in each element of $\mathcal{V}_{j}$. Let $V_{j;k}=\{I_{1},I_{2},\cdots, I_{j}\}$ and $V_{j;l}=\{J_{1},J_{2},\cdots, J_{j}\}$ be in $\mathcal{V}_{j}$. Let $s\in\mathcal{S}$. If $(s(I_{1}),s(I_{2}),\cdots,s(I_{j}))$ is an even permutation of $V_{j;l}$, then $\left(\widetilde{\mathcal{B}}_{j}\right)_{k,l}$ contains the label $s$. If the permutation is odd, then $\left(\widetilde{\mathcal{B}}_{j}\right)_{k,l}$ contains the signed label $-s$. Each entry in $\widetilde{\mathcal{B}}_{j}$ is a signed combination of labels in $\mathcal{S}$. Define the associated transition matrix  $\widetilde{\mathbb{B}}_{j}$ by setting all labels in $\mathcal{S}$ to $1$. Notably, $\widetilde{\mathbb{B}}_{1}=\widetilde{\mathbb{A}}_{1,1}=\mathbb{A}$.  Then, the following lemma follows \cite{1}.

\begin{lem}
\label{lemma:4.2-2}
Let $\mathcal{G}$ be a right-resolving labeled graph with $N$ vertices. Then

\begin{equation}\label{eqn:4.7-4}
\mathcal{P}_{n}(X_{\mathcal{G}})=\underset{j=1}{\overset{N}{\sum}}(-1)^{j+1}\text{tr}\left(\widetilde{\mathcal{B}}^{n}_{j}\right)
\end{equation}
and
\begin{equation}\label{eqn:4.7-5}
\left|\mathcal{P}_{n}(X_{\mathcal{G}})\right|=\underset{j=1}{\overset{N}{\sum}}(-1)^{j+1}\text{tr}\left(\widetilde{\mathbb{B}}^{n}_{j}\right).
\end{equation}
\end{lem}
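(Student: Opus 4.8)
The plan is to reduce the right-hand sides of (\ref{eqn:4.7-4}) and (\ref{eqn:4.7-5}) to a signed count over length-$n$ words, organised by the ``follower'' action of symbols on vertices, and then to collapse that count to $1$ per genuine periodic pattern by an inclusion--exclusion over cyclic structure. Since $\mathcal{G}$ is right-resolving, each symbol $s\in\mathcal{S}$ induces a partial map $i\mapsto s(i)$ on $\mathcal{V}$, and a word $w=u_{1}u_{2}\cdots u_{n}$ induces the composed partial map $f_{w}=u_{n}\circ\cdots\circ u_{1}$. A closed path in $G$ labelled $w$ is exactly a fixed point of $f_{w}$, so $x=(u_{1}\cdots u_{n})^{\infty}$ lies in $X_{\mathcal{G}}$ (equivalently $\sigma^{n}x=x$) if and only if $f_{w}$ has at least one fixed point. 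Moreover $w\mapsto (w)^{\infty}$ is a bijection between those words $w$ for which $f_{w}$ has a fixed point and the set $\mathcal{P}_{n}(X_{\mathcal{G}})$ of $\sigma^{n}$-fixed points, since such a point is recovered from its window $x_{[1,n]}$. Hence it suffices to show that the right-hand side of (\ref{eqn:4.7-5}) counts exactly these words, each once.

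Next I would expand the trace. From the definition of $\widetilde{\mathbb{B}}_{j}$, the diagonal term of $\widetilde{\mathbb{B}}_{j}^{\,n}$ at a $j$-subset $S$ equals the signed sum, over all length-$n$ words $w$ for which $f_{w}$ restricts to a bijection of $S$, of $\mathrm{sgn}(f_{w}|_{S})$; here the signs attached to the single-symbol transitions multiply along the path to the sign of the total permutation, which is the statement that the $\widetilde{\mathbb{B}}_{j}$ are the $j$-th compound (exterior-power) matrices of the follower action. Summing over $S$ and interchanging the order of summation gives $\sum_{j=1}^{N}(-1)^{j+1}\mathrm{tr}(\widetilde{\mathbb{B}}_{j}^{\,n})=\sum_{w}\Phi(w)$, where $\Phi(w)=\sum_{j\geq1}(-1)^{j+1}\sum_{|S|=j,\;f_{w}|_{S}\text{ a bijection}}\mathrm{sgn}(f_{w}|_{S})$.

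The crux is to evaluate $\Phi(w)$. A $j$-subset $S$ on which $f_{w}$ acts as a bijection must be a disjoint union of cycles of $f_{w}$; if the cyclic part of $f_{w}$ consists of cycles $C_{1},\dots,C_{r}$ of lengths $c_{1},\dots,c_{r}$, then every admissible $S$ has the form $\bigcup_{a\in A}C_{a}$ for some nonempty $A\subseteq\{1,\dots,r\}$, with $\mathrm{sgn}(f_{w}|_{S})=\prod_{a\in A}(-1)^{c_{a}-1}$. Substituting and simplifying, the dependence on the cycle lengths cancels: each nonempty $A$ contributes $(-1)^{|A|-1}$, so $\Phi(w)=\sum_{\emptyset\neq A\subseteq\{1,\dots,r\}}(-1)^{|A|-1}=1-(1-1)^{r}$, which is $1$ when $r\geq1$ and $0$ when $r=0$. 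Thus $\Phi(w)=1$ precisely when $(w)^{\infty}\in X_{\mathcal{G}}$ and $0$ otherwise, and summing over $w$ yields (\ref{eqn:4.7-5}). Identity (\ref{eqn:4.7-4}) follows from the identical bookkeeping carried out at the level of formal labels rather than their counts: the coefficient of each word $w$ in $\sum_{j}(-1)^{j+1}\mathrm{tr}(\widetilde{\mathcal{B}}_{j}^{\,n})$ is again $\Phi(w)$, so every genuine period-$n$ pattern survives with coefficient $1$ and all spurious terms cancel.

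I expect the main obstacle to be the two structural facts underlying the crux: first, that $\mathrm{tr}(\widetilde{\mathbb{B}}_{j}^{\,n})$ is genuinely the signed compound count, i.e.\ that the path-wise product of the transition signs equals $\mathrm{sgn}(f_{w}|_{S})$ (multiplicativity of the sign under composition); and second, the length-independence cancellation that makes $\Phi(w)$ depend only on the number of cycles of $f_{w}$. Both are the sofic-shift analogue of the zeta-function computation in Lind and Marcus \cite{1}, to which the routine verifications can be deferred.
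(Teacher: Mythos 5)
Your overall strategy is the standard signed subset-matrix computation from Lind and Marcus, which is precisely what the paper intends: the paper offers no proof of this lemma at all, it simply asserts that it "follows" from \cite{1}. Your expansion of $\text{tr}\left(\widetilde{\mathbb{B}}^{n}_{j}\right)$ as a signed sum over length-$n$ words, the identification of the $j$-subsets $S$ on which $f_{w}$ is a bijection with the nonempty unions of cycles of $f_{w}$, and the cancellation yielding $\Phi(w)=1-(1-1)^{r}$ are all correct.

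There is, however, one false step in your first paragraph: the claim that $(u_{1}\cdots u_{n})^{\infty}\in X_{\mathcal{G}}$ if and only if $f_{w}$ has a fixed point. The "only if" direction fails, because a bi-infinite presentation of $(w)^{\infty}$ need not return to the same vertex after $n$ steps. Take $G$ with two vertices $1,2$ and two edges $1\to 2$ and $2\to 1$, both labelled $a$: this is right-resolving, $X_{\mathcal{G}}=\{(a)^{\infty}\}$, so $\left|\mathcal{P}_{1}(X_{\mathcal{G}})\right|=1$, yet $f_{a}$ is the transposition of $\{1,2\}$ and has no fixed point. The correct criterion — and the one your own computation of $\Phi$ actually delivers — is that $f_{w}$ has a \emph{periodic} point, i.e.\ that the cyclic part of $f_{w}$ is nonempty ($r\geq 1$): any forward $f_{w}$-orbit in the finite vertex set eventually enters a cycle, and a cycle of length $c$ gives a closed path labelled $w^{c}$, hence a presentation of $(w)^{\infty}$. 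As written your argument is internally inconsistent: you prove $\Phi(w)=1$ exactly when $r\geq 1$, but you declare that the words to be counted are those for which $f_{w}$ has a cycle of length exactly $1$; in the example above the fixed-point criterion predicts a count of $0$, while $\text{tr}\left(\widetilde{\mathbb{B}}_{1}\right)-\text{tr}\left(\widetilde{\mathbb{B}}_{2}\right)=0-(-1)=1$, matching $r\geq 1$ and the true value $\left|\mathcal{P}_{1}(X_{\mathcal{G}})\right|=1$. Replacing "fixed point of $f_{w}$" by "periodic point of $f_{w}$" (equivalently, nonempty eventual image) in the bijection $w\mapsto (w)^{\infty}$ repairs the proof, and the remainder of your argument then establishes both (\ref{eqn:4.7-4}) and (\ref{eqn:4.7-5}).
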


As (\ref{eqn:3.18-1}) $\sim$ (\ref{eqn:3.20}), for any $k,l\geq 0$, let
\begin{equation*}
C^{(p)}_{k,l}([s_{i_{1}},s_{i_{2}},\cdots, s_{i_{n}}]=C_{k,l}([s_{i_{1}},s_{i_{2}},\cdots, s_{i_{n}}])\bigcap\mathcal{P}_{n+k+l-1}(X_{\mathcal{G}}).
\end{equation*}
When $\mathbb{A}$ is irreducible and aperiodic, define the periodic natural measure $\mu^{(p)}$ by

\begin{equation}\label{eqn:4.20}
 \mu^{(p)}([s_{i_{1}},s_{i_{2}},\cdots, s_{i_{n}}])=\underset{k,l\rightarrow\infty}{\lim} \frac{\left|C^{(p)}_{k,l}([s_{i_{1}},s_{i_{2}},\cdots, s_{i_{n}}]) \right|}{\left|\mathcal{P}_{n+k+l-1}(X_{\mathcal{G}}) \right|}.
\end{equation}
When $\mathbb{A}$ is irreducible with period $p$, define the periodic natural measure $\mu^{(p)}$ by
\begin{equation}\label{eqn:4.21}
\mu^{(p)}\left(\left[s_{i_{1}},s_{i_{2}},\cdots, s_{i_{n}} \right]\right)= \underset{n+k+l-1\equiv 0\hspace{0.1cm} (\text{mod } p)}{\underset{k,l\rightarrow\infty}{\lim}} \hspace{0.15cm} \frac{1}{p}\hspace{0.15cm}\frac{ \underset{j=0}{\overset{p-1}{\sum}} \left|C^{(p)}_{k-j,l+j}([s_{i_{1}},s_{i_{2}},\cdots, s_{i_{n}}]) \right|}{ \left|\mathcal{P}_{n+k+l-1}(X_{\mathcal{G}}) \right|}.
\end{equation}

Now, the periodic natural measure is established to equal the natural measure.

\begin{thm}
\label{theorem:4.4}
Assume $\mathcal{G}$ is minimal right-resolving and irreducible. Then,
$\mu^{(p)}_{\mathcal{G}}=\mu_{\mathcal{G}}$.
\end{thm}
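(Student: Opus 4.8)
The plan is to follow the proof of Theorem 3.7 for shifts of finite type, replacing the trace of powers of the $0$-$1$ matrix by the signed symbolic-matrix formula of Lemma 4.10, and then to show that only the $j=1$ term survives in the asymptotic ratio. By Theorem 4.3 and Definition 4.4 the target value is $\mu_{\mathcal{G}}([s_{i_1},\ldots,s_{i_n}])=\frac{1}{\lambda^{n}}\sum_{i,j}u_i(\mathbb{A}_{i_1}\cdots\mathbb{A}_{i_n})_{i,j}v_j$, where $\lambda$ is the Perron value of $\mathbb{A}=\widetilde{\mathbb{B}}_1$ and $U=(u_i)$, $V=(v_j)^t$ are its left and right eigenvectors normalized by $UV=1$; it suffices to prove that both periodic ratios \eqref{eqn:4.20} and \eqref{eqn:4.21} converge to this value.

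For the denominator I would apply Lemma 4.10, $\left|\mathcal{P}_{n+k+l-1}(X_{\mathcal{G}})\right|=\sum_{j=1}^N(-1)^{j+1}\text{tr}(\widetilde{\mathbb{B}}_j^{\,n+k+l-1})$, together with a spectral gap that forces the $j=1$ term to dominate. The gap rests on two observations: $\widetilde{\mathbb{B}}_j$ shares the support of $\widetilde{\mathbb{A}}_{j,j}$ and differs from it only by signs, so $\rho(\widetilde{\mathbb{B}}_j)\le\rho(\widetilde{\mathbb{A}}_{j,j})=\lambda_j$ by the usual entrywise-domination bound; and Lemma 4.2, which is exactly where minimality of $\mathcal{G}$ is used, gives $\lambda_j<\lambda$ for $2\le j\le N$. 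Hence every $j\ge2$ term is $o(\lambda^{\,n+k+l-1})$, and by \eqref{eqn:2.1-4} of Proposition 2.5 the surviving term satisfies $\left|\mathcal{P}_{n+k+l-1}(X_{\mathcal{G}})\right|\sim\text{tr}(\mathbb{A}^{\,n+k+l-1})\sim(UV)\,\lambda^{\,n+k+l-1}=\lambda^{\,n+k+l-1}$.

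For the numerator I would first establish, by the subset-construction argument behind Lemma 4.10 but with the window labels prescribed, a signed identity of the form $\left|C^{(p)}_{k,l}([s_{i_1},\ldots,s_{i_n}])\right|=\sum_{j=1}^N(-1)^{j+1}\text{tr}\!\left(\widetilde{\mathbb{B}}_j^{\,k+l-1}\widetilde{\mathbb{B}}_j(s_{i_1})\cdots\widetilde{\mathbb{B}}_j(s_{i_n})\right)$, where $\widetilde{\mathbb{B}}_j(s)$ denotes the restriction of $\widetilde{\mathbb{B}}_j$ to the single symbol $s$. The same gap estimate annihilates all $j\ge2$ terms, leaving the $j=1$ contribution $\text{tr}(\mathbb{A}^{\,k+l-1}\mathbb{A}_{i_1}\cdots\mathbb{A}_{i_n})$. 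Putting $B=\mathbb{A}_{i_1}\cdots\mathbb{A}_{i_n}$, using cyclicity of the trace and $\mathbb{A}^{\,k+l-1}/\lambda^{\,k+l-1}\to[v_\alpha u_\beta]$ from Proposition 2.5, I obtain $\left|C^{(p)}_{k,l}\right|\sim\lambda^{\,k+l-1}\sum_{i,j}u_iB_{i,j}v_j$. Dividing by the denominator and using $\lambda^{\,k+l-1}/\lambda^{\,n+k+l-1}=\lambda^{-n}$ yields exactly $\mu_{\mathcal{G}}$, which settles the aperiodic case.

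For irreducible $\mathbb{A}$ with period $p\ge2$ I would restrict to $n+k+l-1\equiv0$ $(\mathrm{mod}\ p)$ and average the $p$ shifted windows as in \eqref{eqn:4.21}. When $p\mid m$, the trace $\text{tr}(\mathbb{A}^{\,m})$ receives equal contributions from all $p$ peripheral eigenvalues $\lambda e^{2\pi i(j-1)/p}$, producing a factor $p$ in the denominator and $p^2$ in the averaged numerator, precisely as in the periodic part of Theorem 3.7; the eigenvector phases are furnished by Theorem 3.2 and the Cesàro limit \eqref{eqn:2.1-2} replaces \eqref{eqn:2.1-4}. The factors $p$ cancel in the ratio and $\mu_{\mathcal{G}}$ is recovered. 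The step I expect to be the main obstacle is the very first one for the numerator: checking that inserting the single-symbol matrices $\widetilde{\mathbb{B}}_j(s_{i_r})$ into the signed trace still enumerates the constrained periodic label-sequences with the correct inclusion-exclusion signs, i.e.\ that the merging-of-followers argument underlying Lemma 4.10 remains valid when some edges are forced to carry prescribed labels. Once that identity is secured, the spectral-gap domination and the Perron-Frobenius asymptotics are routine.
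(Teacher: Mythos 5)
Your proposal is correct and follows essentially the same route as the paper: the signed-trace formula of Lemma 4.10 for the periodic count, the entrywise domination $-\widetilde{\mathbb{A}}_{i,i}\leq \widetilde{\mathbb{B}}_{i}\leq \widetilde{\mathbb{A}}_{i,i}$ to bound the spectra of the $\widetilde{\mathbb{B}}_{i}$, the spectral gap $\lambda_{1}>\lambda_{i}$ from the minimality lemma, and then the asymptotics of Theorem 3.6/3.7. In fact you supply more detail than the paper does (it simply asserts that the result follows ``as the proof of Theorem 3.6''), and the step you flag as the main obstacle --- the signed inclusion--exclusion identity for the constrained periodic count in the numerator --- is precisely the part the paper leaves implicit.
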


\begin{proof}
Let $\lambda_{j}$ be the Perron value of $\widetilde{\mathbb{A}}_{j,j}$ for $1\leq j\leq N$.
From Lemma 4.3, $\lambda_{1}>\lambda_{i}$, $2\leq i\leq N$.

From the construction of $\widetilde{\mathcal{B}}_{j}$, $1\leq j\leq N$, we have $\widetilde{\mathbb{B}}_{1}=\widetilde{\mathbb{A}}_{1,1}$ and
 $-\widetilde{\mathbb{A}}_{i,i}\leq \widetilde{\mathbb{B}}_{i}\leq \widetilde{\mathbb{A}}_{i,i}$ for $i\geq 2$.
It can be shown that every eigenvalue of $\widetilde{\mathbb{B}}_{i}$ is in $[-\lambda_{i},\lambda_{i}]$, $i\geq 2$.
Therefore, as the proof of Theorem 3.6, the result follows immediately.
\end{proof}

\section{Countable state shifts}
\setcounter{equation}{0}
This section study the natural measure for both countable state shifts of finite type and sofic shifts.

First, some notation and results concerning countable state shifts are recalled from Kitchens \cite{1-1}.
A matrix is called countable if its rows and columns are indexed by the same set $\mathcal{I}$ that is either finite or countably infinite. In this section, the matrix $T$ is always assumed to be countably infinite. As for finite matrices, a countable non-negative matrix $T=[T_{i,j}]$ is irreducible if for every pair of indices $i$ and $j$, there exists $l$ such that $(T^{l})_{i,j}>0$. For any index $i$, the period of $i$ is defined by $p(i)=\gcd\left\{l \hspace{0.1cm} \mid\hspace{0.1cm}  (T^{l})_{i,i}>0\right\}$. Notably, when $T$ is irreducible, $p(i)$ is independent of $i$ and is called the period of $T$. If $T$ is periodic with period one, $T$ is said to be aperiodic.

When $T$ is countable, non-negative, irreducible and aperiodic, for any pair of indices $i$ and $j$, the value $\underset{n\rightarrow \infty}{\lim} \sqrt[n]{(T^{n})_{i,j}}$ can be demonstrated to exist and be independent of the pair of indices $i$ and $j$. The Perron value of $T$ is defined by $\lambda=\underset{n\rightarrow \infty}{\lim} \sqrt[n]{(T^{n})_{i,j}}$. In this work, $\lambda$ is always assumed to be finite.

Before the generalized Perron-Frobenious Theorem for a countable non-negative matrix can be introduced, more definitions must be presented.
For any pair of indices $i$ and $j$, define

\begin{equation*}
\left\{
\begin{array}{llll}
t_{i,j}(0)=\delta_{i,j}, & t_{i,j}(1)=T_{i,j} & \text{and} & t_{i,j}(n)= (T^{n})_{i,j}, \\
& & & \\
l_{i,j}(0)=0, & l_{i,j}(1)=T_{i,j} & \text{and}& l_{i,j}(n+1)= \underset{r\neq i}{\sum}l_{i,r}(n)t_{r,j},\\
& & & \\
r_{i,j}(0)=0, & r_{i,j}(1)=T_{i,j} & \text{and}& r_{i,j}(n+1)= \underset{r\neq j}{\sum}t_{i,r}r_{r,j}(n).
\end{array}
\right.
\end{equation*}
The three generating functions are defined by
\begin{equation*}
\left\{
\begin{array}{l}
T_{i,j}(z)=\underset{n=1}{\overset{\infty}{\sum}}t_{i,j}(n)z^{n},\\
\\
L_{i,j}(z)=\underset{n=1}{\overset{\infty}{\sum}}l_{i,j}(n)z^{n},\\
\\
R_{i,j}(z)=\underset{n=1}{\overset{\infty}{\sum}}r_{i,j}(n)z^{n}.\\
\end{array}
\right.
\end{equation*}
Clearly, the radius of convergence of every $T_{i,j}(z)$ is $1/\lambda$.
$T$ is said to be recurrent if $T_{i,i}(1/\lambda) =\infty$ and transient if $T_{i,i}(1/\lambda) <\infty$. If $T$ is recurrent, $T$ is positive recurrent if
\begin{equation*}
 \underset{n=1}{\overset{\infty}{\sum}}\frac{n l_{i,i}(n)}{\lambda^{n}}< \infty
\end{equation*}
and null recurrent if
\begin{equation*}
 \underset{n=1}{\overset{\infty}{\sum}}\frac{n l_{i,i}(n)}{\lambda^{n}}= \infty.
\end{equation*}
Notably, these two definitions are independent of the choice of $i$.

Now, the generalized Perron-Frobenius Theorem is presented as follows; see Theorem 7.1.3 in Kitchens \cite{1-1}.

\begin{thm}
\label{theorem:5.1}[Generalized Perron-Frobenius Theorem]
Suppose $T$ is countable, non-negative, irreducible, aperiodic and recurrent. Then there exists a Perron value $\lambda>0$ (assumed to be finite) such that:

\item[(i)] $\lambda=\underset{n\rightarrow \infty}{\lim} \sqrt[n]{(T^{n})_{i,j}}$ for any indices $i$ and $j$;

\item[(ii)] $\lambda$ has strictly positive left and right eigenvectors;

\item[(iii)] the eigenvectors are unique up to constant multiples;

\item[(iv)] let $\mathbf{l}=(l_{j})$ and $\mathbf{r}=(r_{j})$ be the left and right eigenvectors for $\lambda$, then $\mathbf{l}\cdot\mathbf{r}=\underset{j=1}{\overset{\infty}{\sum}}l_{j}r_{j}<\infty$ if and only if $T$ is positive recurrent;

\item[(v)] if $T$ is positive recurrent, $\underset{n\rightarrow\infty}{\lim} \frac{T^{n}}{\lambda^{n}}=\mathbf{r}\mathbf{l}$, normalized so that $\mathbf{l}\cdot\mathbf{r}=1$.

\end{thm}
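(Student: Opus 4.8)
The plan is to follow the generating-function (renewal-theoretic) approach, organised around the three power series $T_{i,j}(z)$, $L_{i,j}(z)$, $R_{i,j}(z)$ already introduced. First I would settle (i). Supermultiplicativity $t_{i,i}(m+n)\ge t_{i,i}(m)\,t_{i,i}(n)$ together with Fekete's lemma shows that $\lim_{n\to\infty}t_{i,i}(n)^{1/n}$ exists; aperiodicity guarantees that $t_{i,i}(n)>0$ for all large $n$, so the limit is genuine rather than along a subsequence. Irreducibility then transfers the value to every pair: choosing $a,b$ with $t_{i,j}(a)>0$ and $t_{j,i}(b)>0$ and sandwiching $t_{i,i}(a+n+b)\ge t_{i,j}(a)\,t_{j,j}(n)\,t_{j,i}(b)$ (and symmetrically) forces $\lim_n t_{i,j}(n)^{1/n}=\lambda$ independently of $i,j$, and identifies $1/\lambda$ as the common radius of convergence of the $T_{i,j}(z)$. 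The renewal identity $T_{i,i}(z)=L_{i,i}(z)/(1-L_{i,i}(z))$, obtained from the first-return decomposition encoded in the recursion for $l_{i,i}(n)$, shows that recurrence ($T_{i,i}(1/\lambda)=\infty$) is equivalent to the normalisation $L_{i,i}(1/\lambda)=1$.

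Fix a reference index, say $0$. I would construct the eigenvectors of (ii) by setting $l_j=L_{0,j}(1/\lambda)$ and $r_i=R_{i,0}(1/\lambda)$, with the conventions $l_0=r_0=1$. The eigen-equations $\mathbf{l}T=\lambda\mathbf{l}$ and $T\mathbf{r}=\lambda\mathbf{r}$ follow by feeding the defining recursions for $l_{0,j}(n)$ and $r_{i,0}(n)$ (last-exit and first-passage decompositions relative to $0$) into the series and collecting terms at $z=1/\lambda$; recurrence is exactly what makes these series finite at the radius of convergence, and irreducibility makes every coordinate strictly positive. For uniqueness (iii) I would pass to the stochastic $h$-transform
\[
P_{i,j}=\frac{T_{i,j}\,r_j}{\lambda\,r_i},
\]
which is stochastic because $T\mathbf{r}=\lambda\mathbf{r}$, and which is irreducible, aperiodic and recurrent. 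Any positive right $\lambda$-eigenvector $\mathbf{x}$ of $T$ yields a nonnegative $P$-harmonic function $h_i=x_i/r_i$; since nonnegative harmonic functions of a recurrent irreducible chain are constant, $\mathbf{x}=c\,\mathbf{r}$. Dually, left $\lambda$-eigenvectors of $T$ are invariant measures of $P$, which are unique up to a scalar for a recurrent chain, giving uniqueness of $\mathbf{l}$.

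For (iv) I would observe that under the $h$-transform the vector $(l_j r_j)_j$ is an invariant measure of $P$: indeed $\sum_i l_i r_i P_{i,j}=\frac{r_j}{\lambda}\sum_i l_i T_{i,j}=l_j r_j$. Hence $\mathbf{l}\cdot\mathbf{r}=\sum_j l_j r_j$ is the total mass of this invariant measure, finite precisely when $P$ is positive recurrent. A short generating-function computation identifies this total mass with the weighted mean return time $\sum_{n}n\,l_{0,0}(n)/\lambda^{n}$ (the derivative of $L_{0,0}$ at $1/\lambda$), so $\mathbf{l}\cdot\mathbf{r}<\infty$ is equivalent to the positive-recurrence criterion in the statement.

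Finally, (v) is the ergodic limit, and here the $h$-transform does the work again. Since $\lambda^{-n}(T^n)_{i,j}=(P^n)_{i,j}\,r_i/r_j$, and $P$ is an aperiodic positive-recurrent stochastic matrix with stationary distribution $\pi_j=l_j r_j/(\mathbf{l}\cdot\mathbf{r})$, the classical convergence theorem $(P^n)_{i,j}\to\pi_j$ gives
\[
\frac{(T^n)_{i,j}}{\lambda^{n}}\longrightarrow \frac{r_i}{r_j}\,\pi_j=\frac{r_i\,l_j}{\mathbf{l}\cdot\mathbf{r}},
\]
which is the claimed rank-one limit $\mathbf{r}\mathbf{l}$ under the normalisation $\mathbf{l}\cdot\mathbf{r}=1$. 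The main obstacle, and the step requiring the most care, is the analytic input feeding the renewal and ergodic theorems: one must verify that $\{l_{0,0}(n)/\lambda^{n}\}_{n\ge 1}$ is a genuine aperiodic probability law (mass one by recurrence, aperiodic by aperiodicity of $T$) so that the discrete renewal theorem applies, and one must control the passage to the limit over the infinite index set — both in the Liouville-type uniqueness step and in transferring the classical ergodic theorem through the unbounded factor $r_i/r_j$, where positive recurrence is exactly the hypothesis that licenses the conclusion.
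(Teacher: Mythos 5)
Your proposal is correct, and it is essentially the proof the paper relies on: the paper gives no argument of its own but cites Theorem 7.1.3 of Kitchens, whose proof is exactly this Vere--Jones generating-function scheme (first-return/last-exit decompositions to build the eigenvectors, the stochastic $h$-transform $P_{i,j}=T_{i,j}r_j/(\lambda r_i)$ for uniqueness and positive recurrence, and the classical ergodic theorem for $P$ to get $\lambda^{-n}T^n\to\mathbf{r}\mathbf{l}$). One minor remark: your final worry about the ``unbounded factor $r_i/r_j$'' in (v) is unnecessary, since the claimed convergence is entrywise and that factor is a fixed constant for each pair $(i,j)$.
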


In the following, the Perron value of $T$ and its left and right eigenvectors can be obtained by finite approximation; see Theorem 7.1.4 in \cite{1-1}.
\begin{thm}
\label{theorem:5.2}
Let $T$ be countable, non-negative, irreducible, aperiodic and recurrent. Assume $\lambda$ is the Perron value of $T$. Let $\mathbf{l}$ and $\mathbf{r}$ be the left and right eigenvectors for $\lambda$, normalized so that $l_{i}=r_{i}=1$ for some index $i$.

\item[(i)] $\lambda=\left\{\lambda(A) \hspace{0.1cm}\mid\hspace{0.1cm}  A \text{ is a finite, irreducible and aperiodic submatrix of }T, \lambda(A) \text{ is the}\right.$

           $\left. \text{Perron value of }A \right\}$
\item[(ii)] Let $\{A_{n}\}$ be an increasing
family of finite irreducible, aperiodic submatrices of $T$ that converges to $T$. Let $\mathbf{l}^{(n)},\mathbf{r}^{n}$ be the left and right eigenvectors of $A_{n}$ for $\lambda(A_{n})$,
 normalized so that $l^{(n)}_{i}=r^{(n)}_{i}=1$. Then, $\underset{n\rightarrow \infty}{\lim}l_{j}^{(n)}=l_{j}$ and $\underset{n\rightarrow \infty}{\lim}r_{j}^{(n)}=r_{j}$ for all index $j$.
\end{thm}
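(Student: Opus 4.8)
The plan is to treat the two parts separately, with part (ii) carrying essentially all of the difficulty. For part (i), write $\lambda(A)$ for the Perron value of a finite submatrix $A$ of $T$, and establish the two halves of the identity. For $\sup_A\lambda(A)\le\lambda$, observe that a principal submatrix satisfies $(A^m)_{i,j}\le(T^m)_{i,j}$ for all indices of $A$ and all $m$; since the diagonal sequence $m\mapsto(A^m)_{i,i}$ is supermultiplicative (concatenation of loops at $i$), Fekete's lemma gives $\lambda(A)=\lim_m(A^m)_{i,i}^{1/m}=\sup_m(A^m)_{i,i}^{1/m}\le\lim_m(T^m)_{i,i}^{1/m}=\lambda$, the last equality by Theorem 5.1(i). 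For the reverse inequality, fix $N$: since the entries of $T$ are integers and $T$ is locally finite, $(T^N)_{i,i}$ is a finite sum over the finitely many loops of length $N$ at $i$, and these visit only finitely many states. Enlarging that finite state set to a finite irreducible aperiodic submatrix $A$ of $T$ (possible because $T$ is irreducible and aperiodic; e.g. take a large enough member of the exhausting family $\{A_n\}$) gives $(A^N)_{i,i}=(T^N)_{i,i}$, whence $\lambda(A)\ge(A^N)_{i,i}^{1/N}=(T^N)_{i,i}^{1/N}$. Letting $N\to\infty$ and using Theorem 5.1(i) yields $\sup_A\lambda(A)\ge\lambda$.

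For part (ii), set $\lambda_n=\lambda(A_n)$ and let $S_n$ be the index set of $A_n$; by part (i) and monotonicity $\lambda_n\uparrow\lambda$. The left eigenvectors satisfy $\sum_{k\in S_n}l^{(n)}_k(A_n)_{k,j}=\lambda_n l^{(n)}_j$ with $l^{(n)}_i=1$, and symmetrically for $\mathbf r^{(n)}$. My strategy is compactness together with the uniqueness of the eigenvector. The finish is clean once the entries are controlled: if $l^{(n)}_j\to\hat l_j$ for every $j$ along a subsequence, then because $(A_n)_{k,j}\uparrow T_{k,j}$ and $\lambda_n\to\lambda$, Fatou's lemma applied to the nonnegative eigenvector relation gives $\sum_k\hat l_k T_{k,j}\le\lambda\hat l_j$; that is, $\hat{\mathbf l}$ is a nonnegative $\lambda$-subinvariant vector with $\hat l_i=1$. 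I would then use the standard fact that for an irreducible recurrent matrix every nonnegative $\lambda$-subinvariant vector is automatically $\lambda$-invariant and unique up to scale, so Theorem 5.1(ii)--(iii) forces $\hat{\mathbf l}=\mathbf l$. Since every subsequential limit is the same, $l^{(n)}_j\to l_j$ for all $j$, and symmetrically $r^{(n)}_j\to r_j$.

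The real work, and the step I expect to be the main obstacle, is the uniform bound needed to run the compactness argument: $l^{(n)}_j\le l_j$ and $r^{(n)}_j\le r_j$ for all $n$ and $j$ (together with monotonicity in $n$, which would even make the limits monotone). This cannot be read off the first-passage representation $l^{(n)}_j=\sum_m l^{(A_n)}_{i,j}(m)\lambda_n^{-m}$ term by term, because passing from $A_n$ to $T$ raises every coefficient $l^{(A_n)}_{i,j}(m)$ but simultaneously lowers the weight: $1/\lambda_n>1/\lambda$ lies \emph{outside} the radius of convergence $1/\lambda$ of the ambient series $L^{(T)}_{i,j}$, so a naive comparison diverges. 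The route I would take is a Collatz--Wielandt/subinvariance comparison internal to each $A_n$: the restriction $\mathbf l|_{S_n}$ is $\lambda$-subinvariant for $A_n$ with $\lambda\ge\lambda_n$, and one shows that for a finite irreducible matrix a normalized $\lambda'$-subinvariant vector with $\lambda'\ge\lambda_n$ dominates the normalized $\lambda_n$-eigenvector entrywise. Applied with $\lambda'=\lambda$ this gives $l^{(n)}_j\le l_j$, and applied with $\lambda'=\lambda_{n+1}$ (restricting $\mathbf l^{(n+1)}$) it gives the monotonicity $l^{(n)}_j\le l^{(n+1)}_j$. It is exactly here that recurrence of $T$ is indispensable, since recurrence is what guarantees that the limiting subinvariant vector is genuinely invariant and unique, thereby closing the argument.
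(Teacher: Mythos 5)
First, a point of comparison: the paper does not prove this statement at all --- it is quoted verbatim from Kitchens (Theorem 7.1.4 of \cite{1-1}), where the proof runs through the taboo/first-return generating functions $F_{i,i}$ and $L_{i,j}$ rather than through compactness. So your proposal is necessarily a different route. Part (i) of your argument is essentially correct: the inequality $\sup_A\lambda(A)\le\lambda$ via $(A^m)_{i,i}\le(T^m)_{i,i}$ and Fekete, and the reverse inequality by capturing the loops of length $N$ at $i$ inside a finite irreducible aperiodic principal submatrix, both work (if the entries of $T$ are not integers one captures all but $\varepsilon$ of the possibly infinite sum $(T^N)_{i,i}$, which changes nothing).

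The gap is in part (ii), and it sits exactly where you predicted: the comparison lemma you invoke is false. You claim that for a finite irreducible $A_n$ with Perron value $\lambda_n$, any positive $\lambda'$-subinvariant vector ($\mathbf{x}A_n\le\lambda'\mathbf{x}$, $\lambda'\ge\lambda_n$) normalized by $x_i=1$ dominates the normalized Perron eigenvector entrywise. Take $A=\left[\begin{smallmatrix}1&1\\1&1\end{smallmatrix}\right]$, so $\rho(A)=2$ with left eigenvector $(1,1)$; the vector $\mathbf{x}=(1,\tfrac12)$ satisfies $\mathbf{x}A=(\tfrac32,\tfrac32)\le 3\,\mathbf{x}$, so it is $\lambda'$-subinvariant for $\lambda'=3\ge 2$ and normalized at the first index, yet $x_2=\tfrac12<1$. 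So subinvariance at a larger eigenvalue gives no entrywise domination, and both your bound $l^{(n)}_j\le l_j$ and the claimed monotonicity $l^{(n)}_j\le l^{(n+1)}_j$ are unsupported. The good news is that your compactness scheme does not actually need domination by $\mathbf{l}$; it only needs $\sup_n l^{(n)}_j<\infty$ for each fixed $j$, and this follows from an elementary estimate you can substitute: from $\mathbf{l}^{(n)}A_n^m=\lambda_n^m\mathbf{l}^{(n)}$ and nonnegativity, $l^{(n)}_j\,(A_n^m)_{j,i}\le\lambda_n^m l^{(n)}_i=\lambda_n^m\le\lambda^m$, and choosing $m$ with $(T^m)_{j,i}>0$ gives $(A_n^m)_{j,i}\ge c>0$ for all large $n$, hence $l^{(n)}_j\le\lambda^m/c$. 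With that local bound in place, your diagonal extraction, the Fatou step showing every subsequential limit $\hat{\mathbf{l}}$ satisfies $\hat{\mathbf{l}}T\le\lambda\hat{\mathbf{l}}$ with $\hat l_i=1$, and the Vere-Jones fact that for an irreducible recurrent matrix every nonnegative $\lambda$-subinvariant vector is $\lambda$-invariant and unique up to scale, do close the argument. As written, however, the proof rests on a false lemma and the monotonicity claims should be dropped.
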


As the finite state shift space in Section 2, countable state shift space is now introduced. Let $\mathcal{I}$ be the set of countable symbols. The two-sided full space is defined by

\begin{equation}\label{eqn:5.1}
\Sigma(\mathcal{I})\equiv \mathcal{I}^{\mathbb{Z}^{1}}=\left\{\left(x_{n}\right)_{n=-\infty}^{\infty} \hspace{0.1cm}\mid\hspace{0.1cm} x_{n}\in \mathcal{I}\right\}.
\end{equation}

Given a countable $0$-$1$ matrix $T$ with index set $\mathcal{I}$, the subshift $\Sigma(T)$ is defined by

\begin{equation}\label{eqn:5.2}
\Sigma(T)=\left\{x=\left(x_{n}\right)_{n=-\infty}^{\infty}\in\Sigma(\mathcal{I}) \hspace{0.1cm}\mid\hspace{0.1cm} T_{x_{n},x_{n+1}}=1 \text{for all }n\in\mathbb{Z}^{1}  \right\}.
\end{equation}
$\Sigma(T)$ is called a countable symbol shift of finite type.

A cylinder in $\Sigma(T)$ is defined by
\begin{equation}\label{eqn:5.3-1}
C([\alpha;i_{1},i_{2},\cdots, i_{n}])=\{x\in\Sigma(T)  \hspace{0.1cm}\mid\hspace{0.1cm} x_{\alpha}=i_{1}, x_{\alpha+1}=i_{2},\cdots, x_{\alpha+n-1}=i_{n}\}
\end{equation}
where $i_{k}\in\mathcal{I}$, $1\leq k\leq n$ and $\alpha\in\mathbb{Z}^{1}$. $[i_{1},i_{2},\cdots, i_{n}]$ is called an admissible pattern if $C([\alpha;i_{1},i_{2},\cdots, i_{n}])\neq\emptyset$ for some $\alpha\in\mathbb{Z}^{1}$. Define the set of all admissible patterns of length $n$ by

\begin{equation}\label{eqn:5.3-2}
\mathcal{B}_{n}(\Sigma(T))=\left\{[i_{1},i_{2},\cdots, i_{n}] \hspace{0.1cm}\mid\hspace{0.1cm} C([\alpha;i_{1},i_{2},\cdots, i_{n}])\neq\emptyset \text{ for some }\alpha\in\mathbb{Z}^{1}\right\}.
\end{equation}
For any $i,j\in\mathcal{I}$, denote by $\mathcal{B}_{n;i,j}(\Sigma(T))$ the admissible patterns in $\mathcal{B}_{n}(\Sigma(T))$ with initial state $i$ and terminal state $j$:

\begin{equation}\label{eqn:5.3-3}
\mathcal{B}_{n;i,j}(\Sigma(T))=\left\{[i_{1},i_{2},\cdots, i_{n}]\in \mathcal{B}_{n}(\Sigma(T)) \hspace{0.1cm}\mid\hspace{0.1cm} i_{1}=i \text{ and } i_{n}=j \right\}.
\end{equation}
Since $T$ is countably infinite, in general, $|\mathcal{B}_{n}(\Sigma(T))|=\infty$. However, $|\mathcal{B}_{n;i,j}(\Sigma(T))|$ is finite when the Perron value of $T$ is finite.

As the Parry measure for irreducible shifts of finite type, a Markov measure for $\Sigma(T)$, which is the unique measure with maximal entropy, is introduced; see the work of \cite{1-1}. If $T$
 is irreducible and positive recurrent, then the Markov measure is defined by the pair $(p,P)$ with
 \begin{equation}\label{eqn:5.3-4}
 \begin{array}{ccc}
 p_{i}=l_{i}r_{i} & \text{and} & P_{i,j}= T_{i,j}\frac{r_{i}}{\lambda r_{j}},
 \end{array}
 \end{equation}
where $\lambda$ is the Perron value of $T$ and $\mathbf{l}$ and $\mathbf{r}$ are the left and right eigenvectors of $\lambda$, normalized so that $\mathbf{l}\cdot \mathbf{r}=1$. Notably, $(p,P)$ is the unique measure with maximal entropy.

Now, if $T$ is irreducible, aperiodic and positive recurrent, then the natural measure of $\Sigma(T)$ can be defined as follows. For $i_{q}\in \mathcal{I}$, $1\leq q\leq n$, let

\begin{equation}\label{eqn:5.4}
\begin{array}{rl}
 & C_{k,l;i,j}([i_{1},i_{2},\cdots, i_{n}]) \\
 & \\
  = & \left\{ x=(x_{-k+1},\cdots,x_{-1},x_{0},x_{1},\cdots,x_{n+l} )\in  \mathcal{B} _{n+k+l}(\Sigma(T))\hspace{0.1cm}\mid\hspace{0.1cm}x_{-k+1}=i, x_{n+l}=j, x_{q}=i_{q}, 1\leq q\leq n \right\}.
\end{array}
\end{equation}

Then, define

\begin{equation}\label{eqn:5.5}
\begin{array}{rl}
 & \mu_{i,j}([i_{1},i_{2},\cdots, i_{n}]) \\
 & \\
=  & \underset{k,l\rightarrow\infty}{\lim} \left|C_{k,l;i,j}([i_{1},i_{2},\cdots, i_{n}]) \right|/ \left|\mathcal{B}_{n+k+l;i,j}(\Sigma(T)) \right|.
\end{array}
\end{equation}

Therefore, the following theorem can be immediately proven.

\begin{thm}
\label{theorem:5.3}
Let $T$ be irreducible, aperiodic and positive recurrent. Then, the natural measure exists and equals the Markov measure $(p,P)$.
\end{thm}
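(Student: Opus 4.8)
The plan is to mirror the proof of Theorem 3.1, replacing the finite-dimensional matrix asymptotics of Proposition 2.5 (equation (2.1-4)) by their countable-state analogue, part (v) of the Generalized Perron--Frobenius Theorem (Theorem 5.1). First I would reduce both quantities in (5.5) to entries of powers of $T$. Reading off the constraints defining $C_{k,l;i,j}([i_1,\ldots,i_n])$, a point of this set is a path running from the prescribed state $i$ at site $-k+1$ to the state $i_1$ at site $1$ in $k$ steps, then following the fixed block $i_1\to i_2\to\cdots\to i_n$, and finally running from $i_n$ at site $n$ to the prescribed state $j$ at site $n+l$ in $l$ steps. Since $T$ is a $0$-$1$ matrix, the count factors as
\[
\left|C_{k,l;i,j}([i_1,\ldots,i_n])\right| = (T^k)_{i,i_1}\left(\prod_{q=1}^{n-1}T_{i_q,i_{q+1}}\right)(T^l)_{i_n,j},
\]
while counting admissible words of length $n+k+l$ from $i$ to $j$ gives $\left|\mathcal{B}_{n+k+l;i,j}(\Sigma(T))\right| = (T^{n+k+l-1})_{i,j}$. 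Both are finite because $\lambda$ is finite.

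Next I would normalise by powers of $\lambda$. Using $\lambda^{k+l}/\lambda^{n+k+l-1}=\lambda^{-(n-1)}$, the ratio in (5.5) becomes
\[
\frac{(T^k)_{i,i_1}/\lambda^{k}\cdot(T^l)_{i_n,j}/\lambda^{l}}{(T^{n+k+l-1})_{i,j}/\lambda^{n+k+l-1}}\cdot\frac{1}{\lambda^{n-1}}\prod_{q=1}^{n-1}T_{i_q,i_{q+1}}.
\]
Because $T$ is positive recurrent, Theorem 5.1(v) gives $\lim_{m\to\infty}(T^m)_{a,b}/\lambda^{m}=r_a l_b$ for every pair $a,b$, with $\mathbf{l}\cdot\mathbf{r}=1$. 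The first numerator factor depends only on $k$, the second only on $l$, and the denominator only on $k+l$, so as $k,l\to\infty$ each converges separately: the numerator tends to $(r_i l_{i_1})(r_{i_n}l_j)$ and the denominator to $r_i l_j$, which is strictly positive since the eigenvectors are strictly positive (Theorem 5.1(ii)) and hence bounded away from $0$ for large $k,l$. Thus the joint limit exists and equals
\[
\mu_{i,j}([i_1,\ldots,i_n]) = \frac{r_i l_{i_1}\,r_{i_n}l_j}{r_i l_j}\cdot\frac{1}{\lambda^{n-1}}\prod_{q=1}^{n-1}T_{i_q,i_{q+1}} = \frac{l_{i_1}r_{i_n}}{\lambda^{n-1}}\,T_{i_1,i_2}\cdots T_{i_{n-1},i_n}.
\]
The boundary factors $r_i$ and $l_j$ cancel, so the answer is independent of $i$ and $j$, establishing (1.8-3) and the existence claim at once.

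Finally I would identify this with the Markov measure $(p,P)$: inserting the Markov weights of (5.3-4) into (2.1-3) and telescoping the right-eigenvector factors $r_{i_q}$ reproduces exactly $\tfrac{l_{i_1}r_{i_n}}{\lambda^{n-1}}T_{i_1,i_2}\cdots T_{i_{n-1},i_n}$ --- the same routine computation as in the finite-state case (equations (2.2)--(2.6)), with $u\to\mathbf{l}$ and $v\to\mathbf{r}$. The one genuinely new ingredient, and the place where care is needed, is the passage from the finite matrix asymptotics to the countable version: the limit $T^m/\lambda^m\to\mathbf{r}\mathbf{l}$ with a finite, positive, rank-one limit holds precisely because $T$ is positive recurrent (Theorem 5.1(iv)--(v)), which is also what makes the normalisation $\mathbf{l}\cdot\mathbf{r}=1$ meaningful. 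Under mere null recurrence or transience this rank-one limit fails and the ratios in (5.5) need not converge, consistent with the discussion surrounding Example 5.4; so positive recurrence is exactly the hypothesis that the whole argument is built on.
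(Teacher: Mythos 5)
Your proposal is correct and follows essentially the same route as the paper's proof: compute $\left|C_{k,l;i,j}\right|$ and $\left|\mathcal{B}_{n+k+l;i,j}\right|$ as entries of powers of $T$, apply part (v) of the Generalized Perron--Frobenius Theorem to pass to the limit, observe that the boundary factors $r_i$ and $l_j$ cancel so the result is independent of $i$ and $j$, and match the resulting formula with the Markov measure $(p,P)$ via (\ref{eqn:5.3-4}). Your write-up is in fact somewhat more explicit than the paper's (which compresses the limit computation into a single appeal to the theorem), but there is no substantive difference.
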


\begin{proof}
It can be easily verified that

\begin{equation*}
 \left|C_{k,l;i,j}([i_{1},i_{2},\cdots, i_{n}])\right|=\left(T^{k}\right)_{i,i_{1}} T_{i_{1},i_{2}}\cdots T_{i_{n-1},i_{n}} \left(T^{l}\right)_{i_{n},j}
\end{equation*}
and
\begin{equation*}
\left|\mathcal{B}_{n+k+l;i,j}(\Sigma(T)) \right|= \left(T^{n+k+l-1}\right)_{i,j}.
\end{equation*}
From the generalized Perron-Frobenious Theorem,

\begin{equation}\label{eqn:5.6}
\mu_{i,j}([i_{1},i_{2},\cdots, i_{n}])=\frac{l_{i_{1}}r_{i_{n}}}{\lambda^{n-1}}T_{i_{1},i_{2}}T_{i_{2},i_{3}}\cdots T_{i_{n-1},i_{n}},
\end{equation}
which is independent of $i$ and $j$. The natural measure of $\Sigma(T)$ is defined by

\begin{equation}\label{eqn:5.7}
\mu([i_{1},i_{2},\cdots, i_{n}])\equiv\mu_{i,j}([i_{1},i_{2},\cdots, i_{n}])
\end{equation}
for some $i,j\in\mathcal{I}$.

From (\ref{eqn:5.3-4}) and (\ref{eqn:5.6}), the result follows immediately.
\end{proof}
In Kitchens \cite{1-1}, Proposition 7.2.13 states that $\Sigma_{T}$ has measure with maximal entropy if and only if $T$ is positive recurrent. Therefore, if $T$ is null recurrent or transient, then the previous procedure for
finding natural measure will encounter some intrinsic problems. Indeed, the situation that pertains when $T$ is null-recurrent or transient is discussed briefly below.

The random walk on integers is investigated first.

\begin{ex}
\label{example:5.4}

The random walk on integers is defined as follows \cite{1-1}.

\begin{equation*}
\includegraphics[scale=0.4]{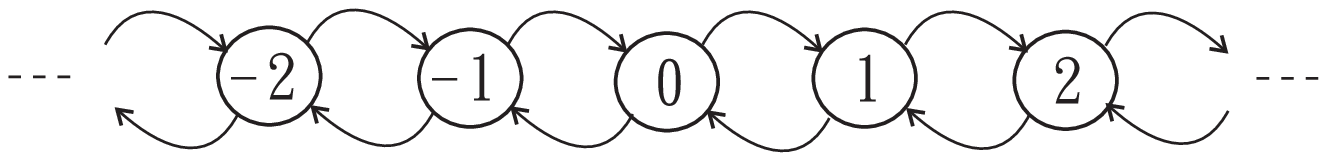}
\end{equation*}
\begin{equation*}
\text{Figure 5.1.}
\end{equation*}
The set of symbols is $\mathbb{Z}^{1}$ and the associated transition matrix $T$ is given by
\end{ex}

\begin{equation}\label{eqn:5.13}
\begin{array}{c}
\psfrag{a}{$T=$}
\psfrag{b}{{\footnotesize$\mathbb{Z}^{1}\times \mathbb{Z}^{1}$}}
\includegraphics[scale=0.5]{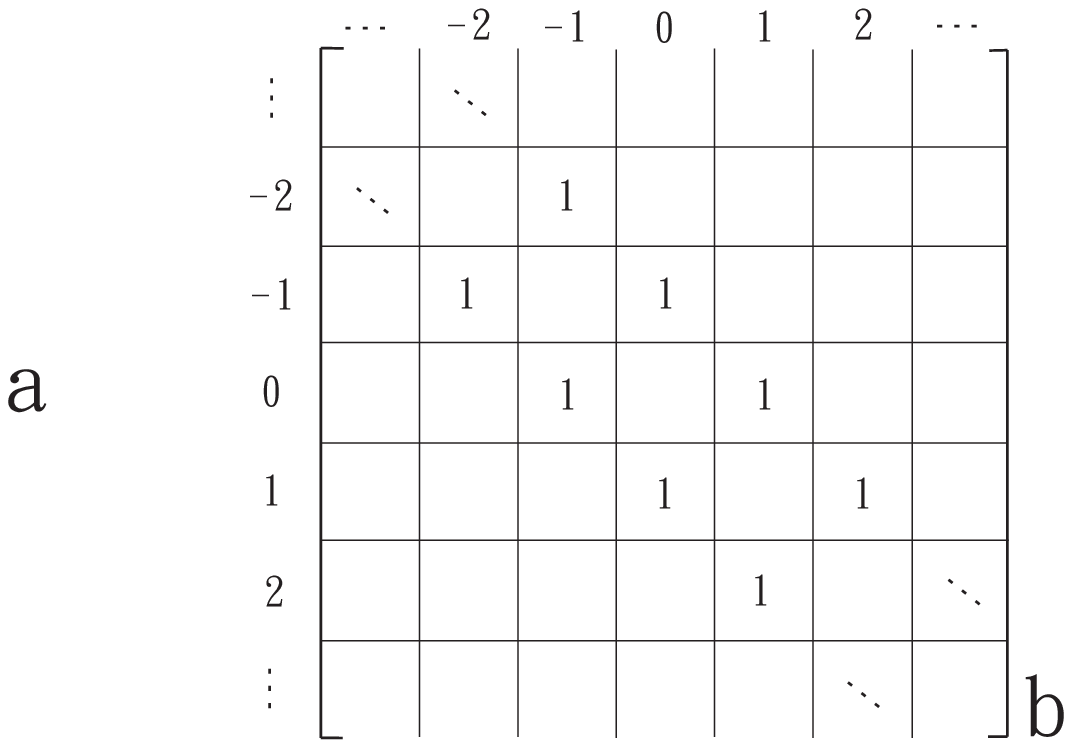}
\end{array}
\end{equation}

Clearly, the Perron value $\lambda=2$, and the left eigenvector $\mathbf{l}$ and right eigenvector $\mathbf{r}$ are

\begin{equation}\label{eqn:5.14}
\mathbf{l}=\mathbf{r}^{t}=(\cdots,1,1,1,\cdots),
\end{equation}
all entries are $1$.
Therefore, the transition matrix $T$ is null recurrent. It is easy to verify

\begin{equation}\label{eqn:5.15}
\left| \mathcal{B}_{n;i,j}\right|=\left|\left\{(\alpha_{1},\alpha_{2},\cdots,\alpha_{n-1})\hspace{0.1cm}\mid\hspace{0.1cm} \underset{k=1}{\overset{n-1}{\sum}}\alpha_{k}=j-i, \alpha_{k}\in\{0,1\}, 1\leq k\leq n-1 \right\}\right|.
\end{equation}
In particular, for $k,l\geq1$,

\begin{equation}\label{eqn:5.16}
\left| \mathcal{B}_{2k+2l+1;0,0}\right|=\left(T^{2k+2l}\right)_{0,0}=C^{2k+2l}_{k+l}
\end{equation}
and

\begin{equation}\label{eqn:5.17}
\left|C_{2k,2l;0,0}([0])\right|= \left(T^{2k}\right)_{0,0}\left(T^{2l}\right)_{0,0}=C^{2k}_{k}C^{2l}_{l}.
\end{equation}

By Stirling formula, it can be verified that
\begin{equation*}
 \frac{\left|C_{2k,2l;0,0}\right|([0])}{\left| \mathcal{B}_{2k+2l+1;0,0}\right|}=\frac{\left(T^{2k}\right)_{0,0}\left(T^{2l}\right)_{0,0}}{\left(T^{2k+2l}\right)_{0,0}}\sim \sqrt{\frac{k+l}{kl\pi }} \hspace{0.5cm} \text{as }k,l\rightarrow\infty,
\end{equation*}
which implies

\begin{equation}\label{eqn:5.18-0}
\underset{k,l\rightarrow\infty}{\lim} \frac{\left|C_{2k,2l;0,0}([0])\right|}{\left| \mathcal{B}_{2k+2l+1;0,0}\right|}=0.
\end{equation}
On the other hand, if $m$ is odd, then $\left| \mathcal{B}_{m+1;0,0}\right|=0$; if one of $m_{1}$ and $m_{2}$ is odd, then  $\left|C_{m_{1};m_{2},0,0}([0])\right|=0$.

Similarly, for any $i\in\mathbb{Z}^{1}$,
\begin{equation}\label{eqn:5.18-1}
\left\{
\begin{array}{ll}
\underset{k,l\rightarrow\infty}{\lim} \frac{\left|C_{2k,2l;0,0}([i])\right|}{\left| \mathcal{B}_{2k+2l+1;0,0}\right|}=0 & \text{if }i \text{ is even}, \\
& \\
\underset{k,l\rightarrow\infty}{\lim} \frac{\left|C_{2k+1,2l+1;0,0}([i])\right|}{\left| \mathcal{B}_{2k+2l+3;0,0}\right|}=0 & \text{if }i \text{ is odd}.
\end{array}
\right.
\end{equation}
In viewing (\ref{eqn:5.18-0}) and (\ref{eqn:5.18-1}), (\ref{eqn:5.5}) does not produce any measure.

For the case when $T$ is transient, recall the following shift space, which was discussed by Kitchens \cite{1-1}.

\begin{equation*}
\includegraphics[scale=0.4]{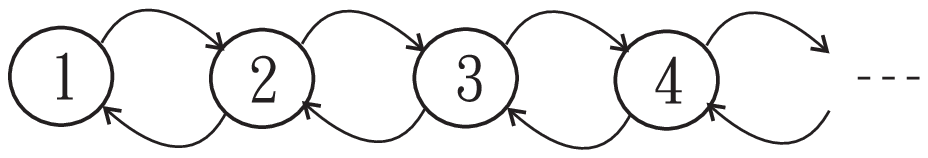}
\end{equation*}
\begin{equation*}
\text{Figure 5.2.}
\end{equation*}
The Perron value $\lambda=2$ and the left eigenvector $\mathbf{l}$ and right eigenvector $\mathbf{r}$ are given by
\begin{equation*}
\mathbf{l}=(1,2,3,\cdots)=\mathbf{r}^{t}.
\end{equation*}
It can also be shown that (\ref{eqn:5.5}) does not produce any measure.

Now, the natural measure of the countable state sofic shift is considered. First, the countable state labeled graph is introduced as follows.

Let $G_{\infty}$ be a graph with a countable set $\mathcal{V}(G_{\infty})$ of vertices and a countable set $\mathcal{E}(G_{\infty})$ of edges. For any $i,j\in \mathcal{V}(G_{\infty})$, denote by $T_{i,j}$ the number of edges in $G_{\infty}$ with initial state $i$ and terminal state $j$. The adjacency matrix of $G_{\infty}$ is defined by $T=T(G_{\infty})=[T_{i,j}]$.

Given a graph $G_{\infty}$, a labeling function $\mathcal{L}: \mathcal{E}(G_{\infty})\rightarrow \mathcal{S}$ assigns to each vertex $\xi\in \mathcal{V}(G_{\infty})$ a label $\mathcal{L}(\xi)\in\mathcal{S}$. A labeled graph is a pair $\mathcal{G}_{\infty}=(G_{\infty},\mathcal{L})$. A labeled graph $\mathcal{G}_{\infty}=(G_{\infty},\mathcal{L})$ is right-resolving if the edge with initial state $i$ carries different labels for every vertex $i\in \mathcal{V}(G)$. Let $\mathcal{T}_{i,j}$ be the formal sum of the alphabets from vertex $i$ to vertex $j$. Then, the symbolic adjacency matrix is defined by $\mathcal{T}=\mathcal{T}(\mathcal{G}_{\infty})=[\mathcal{T}_{i,j}]$. Denote by $\mathbb{T}$ the adjacency matrix of $\mathcal{T}$. For each $s\in\mathcal{S}$, define $\mathcal{T}(s)=[\mathcal{T}_{i,j}(s)]$ by

\begin{equation*}
 \left\{
 \begin{array}{ll}
 \mathcal{T}_{i,j}(s)=s & \text{ if } s\in  \mathcal{T}_{i,j} \\
 & \\
  \mathcal{T}_{i,j}(s)=\emptyset & \text{ otherwise.}
\end{array}
 \right.
\end{equation*}
Moreover, denote by $\mathbb{T}(s)$ the adjacency matrix of $\mathcal{T}(s)$.

As in Section 4, the edge shift $X_{G_{\infty}}$ of $G_{\infty}$ is defined by

\begin{equation}\label{eqn:5.8}
X_{G_{\infty}}=\left\{\xi=(\xi_{i})_{i=-\infty}^{\infty}\in\mathcal{E}^{\mathbb{Z}^{1}}(G_{\infty})\hspace{0.1cm}\mid \hspace{0.1cm}t(\xi_{i})=i(\xi_{i+1}) \text{ for all }i\in\mathbb{Z}^{1}\right\},
\end{equation}
and the sofic shift $X_{\mathcal{G}_{\infty}}$ is defined by

\begin{equation}\label{eqn:5.9}
\begin{array}{rl}
X_{\mathcal{G}_{\infty}}= & \left\{x\in\mathcal{S}^{\mathbb{Z}^{1}} \hspace{0.1cm}\mid\hspace{0.1cm} x=\mathcal{L}_{\infty}(\xi)\equiv\left(\mathcal{L}(\xi_{n})\right)_{n=-\infty}^{\infty} \text{ where }
\xi=\left(\xi_{n}\right)_{n=-\infty}^{\infty}\in X_{G_{\infty}} \right\} \\
& \\
= & \mathcal{L}_{\infty}(X_{G_{\infty}}).
\end{array}
\end{equation}

For any vertices $i,j\in\mathcal{V}(G)$, the cylinder set with initial state $i$ and terminal state $j$ is defined by
\begin{equation}\label{eqn:5.10}
\begin{array}{rl}
& C_{k,l;i,j}([s_{i_{1}},s_{i_{2}},\cdots,s_{i_{n}} ]) \\
 & \\
 = & \left\{ x=(x_{-k+1},\cdots,x_{-1},x_{0},x_{1},\cdots,x_{n+l} )\in  \mathcal{B}_{n+k+l}(X_{\mathcal{G}_{\infty}})\hspace{0.1cm}\mid\hspace{0.1cm}i(x_{-k+1})=i, t(x_{n+l})=j,\right.\\
 & \\
 & \left. x_{q}=s_{i_{q}}, 1\leq q\leq n \right\}
\end{array}
\end{equation}
where $\mathcal{B}_{n}(X_{\mathcal{G}_{\infty}})$ is the set of all admissible patterns with length $n$ in $(X_{\mathcal{G}_{\infty}})$. The set of the admissible patterns in $\mathcal{B}_{n}(X_{\mathcal{G}_{\infty}})$ with initial state $i$ and terminal state $j$ is denoted by $\mathcal{B}_{n;i,j}(X_{\mathcal{G}_{\infty}})$

Given a right-resolving countable state sofic shift $\mathcal{G}_{\infty}=(G_{\infty},\mathcal{L})$, if $T$ is irreducible, aperiodic and positive recurrent, then the natural measure of initial state $i$ and terminal state $j$ is defined as follows.
For any admissible pattern $\left[s_{i_{1}},s_{i_{2}},\cdots,s_{i_{n}}\right]$,

\begin{equation}\label{eqn:5.11}
\mu_{i,j}\left(\left[s_{i_{1}},s_{i_{2}},\cdots,s_{i_{n}}\right]\right)\equiv\underset{k,l\rightarrow\infty}{\lim} \left|C_{k,l;i,j}([s_{i_{1}},s_{i_{2}},\cdots,s_{i_{n}}]) \right|/ \left|\mathcal{B}_{n+k+l;i,j}(X_{\mathcal{G}_{\infty}}) \right|.
\end{equation}

\begin{thm}
\label{theorem:5.4}
Suppose $\mathcal{G}_{\infty}=(G_{\infty},\mathcal{L})$ is right-resolving. Let $T$ be irreducible, aperiodic and positive recurrent. Let $\lambda$ be the Perron value of $T$ and $\mathbf{l}=(l_{j})$ and $\mathbf{r}=(r_{j})$ be the left and right eigenvectors for $\lambda$, normalized with $\mathbf{l}\cdot\mathbf{r}=1$. The natural measure
\begin{equation}\label{eqn:5.12}
\begin{array}{rl}
\mu\left(\left[s_{i_{1}},s_{i_{2}},\cdots,s_{i_{n}}\right]\right)\equiv & \mu_{i,j}\left(\left[s_{i_{1}},s_{i_{2}},\cdots,s_{i_{n}}\right]\right)\\
& \\
=& \underset{q,q'}{\sum}l_{q}\left[\mathbb{T}(s_{i_{1}})\mathbb{T}(s_{i_{2}})\cdots\mathbb{T}(s_{i_{n}})\right]_{q,q'}r_{q'}/\lambda^{n}
\end{array}
\end{equation}
is independent of $i$ and $j$.
\end{thm}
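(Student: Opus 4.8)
The plan is to mirror the proof of Theorem 5.3, replacing the single adjacency matrix $T$ by the labelled pieces $\mathbb{T}(s)$, while using the right-resolving hypothesis to bypass the inclusion--exclusion machinery (Lemma 4.2) that was required in the finite sofic case. The crucial observation is that, once the initial vertex $i$ is fixed, right-resolving turns the correspondence between label words readable from $i$ and paths issuing from $i$ into a bijection: distinct paths from $i$ carry distinct label words, and each readable word lifts to a unique path. Hence fixing both endpoints $i$ and $j$ removes all multiplicity, so the sofic counts coincide with the underlying edge-shift counts with no overcounting, and the whole argument reduces to matrix products of $T$ and the $\mathbb{T}(s)$.

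First I would establish the two counting identities. By the bijection above,
\[
\left|\mathcal{B}_{n+k+l;i,j}(X_{\mathcal{G}_\infty})\right| = (T^{n+k+l})_{i,j},
\]
since the length-$(n+k+l)$ words readable from $i$ and terminating at $j$ correspond to the paths of that length from $i$ to $j$. For the numerator, I would decompose such a path into an arbitrary length-$k$ prefix from $i$ to a vertex $q$, a middle block of $n$ edges carrying the prescribed labels $s_{i_1},\dots,s_{i_n}$ from $q$ to a vertex $q'$, and an arbitrary length-$l$ suffix from $q'$ to $j$, which gives
\[
\left|C_{k,l;i,j}([s_{i_1},\dots,s_{i_n}])\right| = \sum_{q,q'} (T^k)_{i,q}\,M_{q,q'}\,(T^l)_{q',j}, \qquad M := \mathbb{T}(s_{i_1})\mathbb{T}(s_{i_2})\cdots\mathbb{T}(s_{i_n}).
\]
Next I would pass to the limit using Theorem 5.1(v): since $\lambda^{-m}(T^m)_{a,b}\to r_a l_b$ for all indices $a,b$, the denominator satisfies $(T^{n+k+l})_{i,j}\sim \lambda^{n+k+l} r_i l_j$, and strict positivity of $r_i,l_j$ makes the ratio eventually well defined. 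Writing $a^{(k)}_q=\lambda^{-k}(T^k)_{i,q}$ and $b^{(l)}_{q'}=\lambda^{-l}(T^l)_{q',j}$, the ratio becomes $\tfrac{\lambda^{k+l}}{(T^{n+k+l})_{i,j}}\sum_{q,q'}a^{(k)}_q M_{q,q'}b^{(l)}_{q'}$, and since $a^{(k)}_q\to r_i l_q$ and $b^{(l)}_{q'}\to r_{q'}l_j$ pointwise, the desired value $\lambda^{-n}\sum_{q,q'}l_q M_{q,q'}r_{q'}$ emerges as soon as the limit can be moved inside the double sum.

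The hard part will be justifying that interchange, because the sums over $q,q'$ are genuinely infinite. I would handle it by dominated convergence. The eigenvector relations $\mathbf{l}T^k=\lambda^k\mathbf{l}$ and $T^l\mathbf{r}=\lambda^l\mathbf{r}$, together with non-negativity, give the uniform bounds $a^{(k)}_q\le l_q/l_i$ and $b^{(l)}_{q'}\le r_{q'}/r_j$, so every summand is dominated, independently of $k$ and $l$, by $(l_i r_j)^{-1}\, l_q M_{q,q'} r_{q'}$. The dominating series is summable precisely because $T$ is positive recurrent: from $\mathbb{T}(s)\le T$ entrywise one gets $M_{q,q'}\le (T^n)_{q,q'}$, whence $\sum_{q,q'} l_q M_{q,q'} r_{q'}\le \mathbf{l}T^n\mathbf{r}=\lambda^n(\mathbf{l}\cdot\mathbf{r})=\lambda^n<\infty$ by Theorem 5.1(iv) and the normalization $\mathbf{l}\cdot\mathbf{r}=1$. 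Dominated convergence then permits exchanging $\lim_{k,l\to\infty}$ with $\sum_{q,q'}$; combining this with the denominator asymptotics yields $\mu_{i,j}([s_{i_1},\dots,s_{i_n}])=\lambda^{-n}\sum_{q,q'}l_q M_{q,q'}r_{q'}$, which is exactly \eqref{eqn:5.12} and is manifestly independent of $i$ and $j$.
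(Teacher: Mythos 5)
Your proposal is correct and follows essentially the same route as the paper: the same two counting identities $\left|C_{k,l;i,j}\right|=\sum_{q,q'}(T^{k})_{i,q}M_{q,q'}(T^{l})_{q',j}$ and $\left|\mathcal{B}_{n+k+l;i,j}\right|=(T^{n+k+l})_{i,j}$, followed by the asymptotics from Theorem 5.1(v). The only difference is that you explicitly justify interchanging the limit with the infinite double sum via dominated convergence (using $\lambda^{-k}(T^{k})_{i,q}\le l_{q}/l_{i}$, $\lambda^{-l}(T^{l})_{q',j}\le r_{q'}/r_{j}$, and summability from positive recurrence), a step the paper's proof asserts without comment; this is a welcome tightening rather than a departure.
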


\begin{proof}
Since $\mathcal{G}_{\infty}$ is right-resolving, it can be verified that

\begin{equation*}
\left|C_{k,l;i,j}([s_{i_{1}},s_{i_{2}},\cdots,s_{i_{n}} ])\right|=
\underset{q,q'}{\sum}\left(\mathbb{T}^{k}\right)_{i,q}\left[\mathbb{T}(s_{i_{1}})\mathbb{T}(s_{i_{2}})\cdots\mathbb{T}(s_{i_{n}})\right]_{q,q'}\left(\mathbb{T}^{l}\right)_{q',j}
\end{equation*}
and

\begin{equation*}
\left|\mathcal{B}_{n+k+l;i,j}(X_{\mathcal{G}_{\infty}})\right|=\left(\mathbb{T}^{n+k+l}\right)_{i,j}.
\end{equation*}
From (v) of the generalized Perron-Frobenius Theorem,

\begin{equation*}
\left|C_{k,l;i,j}([s_{i_{1}},s_{i_{2}},\cdots,s_{i_{n}} ])\right|\sim
\lambda^{k+l}r_{i}l_{j}\left(\underset{q,q'}{\sum}l_{q}\left[\mathbb{T}(s_{i_{1}})\mathbb{T}(s_{i_{2}})\cdots\mathbb{T}(s_{i_{n}})\right]_{q,q'}r_{q'}\right)
\end{equation*}
and
\begin{equation*}
\left|\mathcal{B}_{n+k+l;i,j}(X_{\mathcal{G}_{\infty}})\right|\sim \lambda^{n+k+l}r_{i}l_{j}.
\end{equation*}
Therefore, (\ref{eqn:5.12}) follows immediately.
\end{proof}
%
%
%
%
%

\section{General shift space}
\setcounter{equation}{0}

In this section, Krieger cover is utilized to study the natural measure of the general shift space by transforming it into a countably infinite state sofic shift.

In \cite{1}, the extension of irreducible sofic shifts to general shift spaces has been discussed. Indeed, three extensions have been considered:

\begin{itemize}
\item[(1)] $X$ is a sofic shift.

\item[(2)] $X$ has countably many futures of left-infinite sequences.

\item[(3)] $X$ has an intrinsically synchronizing word.

\item[(4)] $X$ is a coded system.
\end{itemize}
For irreducible shifts (1)$\Rightarrow$ (2)$\Rightarrow$ (3)$\Rightarrow$ (4) and there are examples to show that none of these implications can be reversed. Based on the study of countably infinite state sofic shift of the previous section, (2) is investigated as follows.

Denote by $\Sigma$ the subshift space with a finite set $\mathcal{A}$ of symbols. In general, $\Sigma$ is not a shift of finite type or a sofic shift. $\Sigma$ is known to be able to be specified by describing its forbidden set:

\begin{equation}\label{eqn:6.2}
\Sigma=\Sigma_{\mathcal{F}}= \left\{
 x=(x_{n})_{n=-\infty}^{\infty}\in \mathcal{A}^{\mathbb{Z}^{1}} \hspace{0.1cm}\mid \hspace{0.1cm} (x_{n})_{n=k}^{l} \text{ is not in } \mathcal{F} \text { for all integers }  l>k
 \right\},
\end{equation}
where

\begin{equation}\label{eqn:6.1}
\mathcal{F}\subset \underset{n=1}{\overset{\infty}{\bigcup}} \mathcal{B}_{n}(\Sigma).
\end{equation}

If $\Sigma=\Sigma_{\mathcal{F}'}$ with finite $\mathcal{F}'$, then $\Sigma$ is a shift of finite type.
Therefore, $\mathcal{F}$ is assumed to be countably infinite in this section. For simplicity, a shift space $\Sigma$ is assumed to be irreducible here.

Define the set $\Sigma^{-}$ of left-sequence by

\begin{equation}\label{eqn:6.3}
\Sigma^{-}= \left\{
 x=(x_{n})_{n=-\infty}^{0}   \hspace{0.1cm}\mid \hspace{0.1cm}  (x_{n})_{n=-\infty}^{\infty}\in \Sigma
 \right\}
\end{equation}
and the set $\Sigma^{+}$ of right-sequence by

\begin{equation}\label{eqn:6.4}
\Sigma^{+}= \left\{
 x=(x_{n})_{n=1}^{\infty}   \hspace{0.1cm}\mid \hspace{0.1cm}  (x_{n})_{n=-\infty}^{\infty}\in \Sigma
 \right\}.
\end{equation}

The Krieger (future) cover is introduced as follows. For each $\xi \in \Sigma^{-}$,
the future cover of $\xi$ is denoted as

\begin{equation}\label{eqn:6.5}
F(\xi)=\left\{
\eta\in \Sigma^{+}  \hspace{0.1cm}\mid \hspace{0.1cm} \xi\eta \in \Sigma
\right\}.
\end{equation}
An equivalent relation $\sim$ on $\Sigma^{-}$ can be defined as follows. For $\xi_{1}$ and $\xi_{2}$ in $\Sigma^{-}$, $\xi_{1}$ is equivalent to $\xi_{2}$, i.e., $\xi_{1}\sim \xi_{2}$ if

\begin{equation*}
F(\xi_{1})=F(\xi_{2}).
\end{equation*}
Therefore, $\mathfrak{I}=\Sigma^{-}/\sim=\left\{[\xi]  \hspace{0.1cm}\mid \hspace{0.1cm}   \xi\in \Sigma^{-} \right\}$ is decomposed into equivalent classes.
$[\xi]$ is now treated as a state. For simplicity, use $\xi$ instead of $[\xi]$ as long doing so does not cause confusion.
Notably, $\mathfrak{I}$ may be uncountable.

With these states (vertices), the labeled graph $\mathcal{G}_{\infty}=(G_{\infty},\mathcal{L})$ is defined by for any $s\in \mathcal{A}$,

\begin{equation*}
\xi_{1} \overset{s}{\rightarrow} \xi_{2}
\end{equation*}
whenever $\xi_{2}=\xi_{1}s\in\mathfrak{I}$, where $\mathfrak{I}$ is countable.

 When the Krieger cove of $\Sigma$ induces a countable state sofic shift $\mathcal{G}_{\infty}=(G_{\infty},\mathcal{L})$
and the associated adjacency matrix $\mathbb{T}$ is irreducible, aperiodic and positive recurrent, by Theorem 5.5, the natural measure of $\mathcal{G}_{\infty}=(G_{\infty},\mathcal{L})$ exists and can be represented by (\ref{eqn:5.12}). Now, we can define a measure $\mu$ on $\Sigma$ by (\ref{eqn:5.12}). Furthermore, we still call $\mu$ the natural measure of $\Sigma$.

%
%
%
%

\begin{thm}
\label{theorem:6.0-1}
Let $\Sigma$ be a shift space. Suppose $\mathcal{G}_{\infty}=(G_{\infty},\mathcal{L})$ is the labeled graph constructed from $\Sigma$ by Krieger cover. Assume $\mathcal{G}_{\infty}$ is right-resolving and its associated adjacency matrix $\mathbb{T}$ is countable, irreducible, aperiodic and positive recurrent. Then, (\ref{eqn:5.12}) define a measure on $\Sigma$.
\end{thm}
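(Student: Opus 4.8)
The plan is to show that the set function defined by (\ref{eqn:5.12}) on the collection $\mathcal{C}(\Sigma)$ of cylinders satisfies the hypotheses of the Kolmogorov extension theorem, so that it extends uniquely to a shift-invariant probability measure on $(\Sigma,\mathfrak{B}(\Sigma))$. Write $P_{n}=\mathbb{T}(s_{i_{1}})\mathbb{T}(s_{i_{2}})\cdots\mathbb{T}(s_{i_{n}})$, so that $\mu([s_{i_{1}},\ldots,s_{i_{n}}])=\lambda^{-n}\,\mathbf{l}\,P_{n}\,\mathbf{r}$. Since each $\mathbb{T}(s)$ has non-negative integer entries and, by the construction of $\mathbb{T}$ from $\mathcal{T}$, $\sum_{s\in\mathcal{S}}\mathbb{T}(s)=\mathbb{T}$, every entry of $P_{n}$ is non-negative and dominated entrywise by the corresponding entry of $\mathbb{T}^{n}$. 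Hence $0\le \mathbf{l}\,P_{n}\,\mathbf{r}\le \mathbf{l}\,\mathbb{T}^{n}\,\mathbf{r}=\lambda^{n}(\mathbf{l}\cdot\mathbf{r})=\lambda^{n}$, where the evaluation uses $\mathbf{l}\mathbb{T}=\lambda\mathbf{l}$ and the normalization $\mathbf{l}\cdot\mathbf{r}=1$ guaranteed by positive recurrence in the generalized Perron--Frobenius theorem (Theorem 5.1(iv),(v)). Thus each sum in (\ref{eqn:5.12}) converges absolutely and $0\le\mu([s_{i_{1}},\ldots,s_{i_{n}}])\le 1$.

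First I would verify the two requirements of the extension theorem. For normalization, summing (\ref{eqn:5.12}) over all admissible words of length $n$ is the same as summing over all of $\mathcal{S}^{n}$ (non-admissible words give $P_{n}=0$ and contribute nothing), which replaces $P_{n}$ by $\sum_{s_{1},\ldots,s_{n}}\mathbb{T}(s_{1})\cdots\mathbb{T}(s_{n})=\mathbb{T}^{n}$; therefore the total mass is $\lambda^{-n}\,\mathbf{l}\,\mathbb{T}^{n}\,\mathbf{r}=\lambda^{-n}\lambda^{n}(\mathbf{l}\cdot\mathbf{r})=1$. For consistency, summing over the appended symbol on the right and using $\mathbb{T}\mathbf{r}=\lambda\mathbf{r}$ gives
\[
\sum_{s}\mu([s_{i_{1}},\ldots,s_{i_{n}},s])=\lambda^{-(n+1)}\,\mathbf{l}\,P_{n}\,\mathbb{T}\,\mathbf{r}=\lambda^{-n}\,\mathbf{l}\,P_{n}\,\mathbf{r}=\mu([s_{i_{1}},\ldots,s_{i_{n}}]),
\]
and symmetrically, summing over the prepended symbol and using $\mathbf{l}\mathbb{T}=\lambda\mathbf{l}$ yields $\sum_{s}\mu([s,s_{i_{1}},\ldots,s_{i_{n}}])=\mu([s_{i_{1}},\ldots,s_{i_{n}}])$. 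These identities are exactly the Kolmogorov consistency conditions for the finite-window marginals; they also show $\mu$ is independent of the initial site $\alpha$, hence shift-invariant. The Kolmogorov extension theorem then produces a unique probability measure on $\mathfrak{B}(\Sigma)$ agreeing with (\ref{eqn:5.12}), and because $\mathcal{A}$ is finite and $\Sigma$ is compact with clopen cylinders, countable additivity of the extension is automatic.

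The main obstacle is not any single algebraic step but the legitimacy of manipulating the countably infinite sums over the vertex set $\mathfrak{I}$: the relations $\mathbf{l}\mathbb{T}=\lambda\mathbf{l}$, $\mathbb{T}\mathbf{r}=\lambda\mathbf{r}$, the iterates $\mathbf{l}\mathbb{T}^{n}=\lambda^{n}\mathbf{l}$ and $\mathbb{T}^{n}\mathbf{r}=\lambda^{n}\mathbf{r}$, the splitting $\sum_{s}\mathbb{T}(s)=\mathbb{T}$, and the associativity used in $\mathbf{l}(P_{n}\mathbb{T})\mathbf{r}=\mathbf{l}P_{n}(\mathbb{T}\mathbf{r})$ all must be applied inside infinite sums, which requires justifying interchanges of summation. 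Here the positive-recurrence hypothesis is decisive: since all entries of $\mathbb{T}$, $\mathbf{l}$, and $\mathbf{r}$ are non-negative, Tonelli's theorem permits every rearrangement without convergence concerns, while $\mathbf{l}\cdot\mathbf{r}=1<\infty$ together with the finiteness of $\lambda$ keeps all the relevant sums finite. This is precisely where the conclusion fails when $\mathbf{l}\cdot\mathbf{r}=\infty$, as the null-recurrent random walk of Example 5.4 illustrates, so the argument pins down exactly why positive recurrence cannot be dropped.
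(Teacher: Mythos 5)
Your proposal is correct, but it reaches the conclusion by a genuinely different route than the paper. The paper gives no standalone proof of this theorem: it simply invokes Theorem 5.5, whose proof computes the asymptotic ratios $\left|C_{k,l;i,j}\right|/\left|\mathcal{B}_{n+k+l;i,j}\right|$ via the generalized Perron--Frobenius theorem and shows the limit equals (5.12) independently of $i$ and $j$; the fact that the resulting set function on cylinders is consistent and extends to a countably additive measure on $\mathfrak{B}(\Sigma)$ is left implicit. You instead take the explicit formula (5.12) as the starting point and verify the Kolmogorov conditions directly: non-negativity and the bound $\mathbf{l}P_{n}\mathbf{r}\le\mathbf{l}\mathbb{T}^{n}\mathbf{r}=\lambda^{n}$, total mass one via $\sum_{s}\mathbb{T}(s)=\mathbb{T}$ (which holds precisely because $\mathcal{G}_{\infty}$ is right-resolving, so no two edges between the same pair of vertices share a label --- worth stating, since for a general labeled graph $\mathbb{T}$ counts edges while $\sum_{s}\mathbb{T}(s)$ counts labels), and the two marginal identities from $\mathbb{T}\mathbf{r}=\lambda\mathbf{r}$ and $\mathbf{l}\mathbb{T}=\lambda\mathbf{l}$, with Tonelli justifying all interchanges of the countable sums. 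What your approach buys is a self-contained verification that (5.12) really is a shift-invariant probability measure --- exactly the additivity step the paper glosses over --- and it isolates where positive recurrence ($\mathbf{l}\cdot\mathbf{r}<\infty$) enters. What it does not recover is the paper's interpretation of $\mu$ as a \emph{natural} measure, i.e.\ as the limit of the counting ratios (5.11); for that one still needs Theorem 5.5. One point you should make explicit: a nonzero entry of $P_{n}$ corresponds to a labeled path in the Krieger cover, which forces $[s_{i_{1}},\ldots,s_{i_{n}}]$ to be admissible in $\Sigma$, so $P_{n}=0$ for non-admissible words; you use this in the normalization step, and it is also what guarantees that the extended measure is supported on $\Sigma$ rather than merely on $\mathcal{S}^{\mathbb{Z}}$.
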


Notably, the measure $\mu$ on $\Sigma$ is induced from $\mathcal{G}_{\infty}=(G_{\infty},\mathcal{L})$. For this $\mu$, we do not compute the number of finite cylinder $C_{k,l}([i_{1},i_{2},\cdots,i_{n}])$ in (\ref{eqn:1.5}) directly. Instead, we compute the number of cylinder $C_{k,l;i,j}([s_{i_{1}},s_{i_{2}},\cdots,s_{i_{n}} ])$ of $\mathcal{G}_{\infty}=(G_{\infty},\mathcal{L})$ which is given by (\ref{eqn:5.4}) .

In the following, under certain conditions on $\Sigma$, we show that the natural measure $\mu$ of $\Sigma$ defined by (\ref{eqn:5.12}) attains the maximal entropy, i.e., the measure theoretical entropy of $\mu$ on $\Sigma$ equals the topological entropy $h(\Sigma)$ of $\Sigma$.
%

Let $\mathcal{T}=[t_{i,j}]_{i,j\in\mathbb{N}}$ be the symbolic adjacency matrix of $\mathcal{G}_{\infty}=(G_{\infty},\mathcal{L})$.
For any $n\geq 1$, define the $n$-th order symbolic adjacency matrix $\mathcal{T}_{n}= [t_{n;i,j}]_{n\times n}$ by $t_{n;i,j}=t_{i,j}$ for $1\leq i,j\leq n$. For any $n\geq 1$, let $\mathcal{G}_{n}$ be the sofic shift that is induced by $\mathcal{T}_{n}$. Clearly, if $\mathcal{G}_{\infty}$ is right-resolving, then  $\mathcal{G}_{n}$ is right-resolving. Denote by $\mathbb{T}_{n}$ the transition matrix of $\mathcal{T}_{n}$. Notably, $\mathcal{G}_{k}\subseteq \mathcal{G}_{l}$ for $1\leq k < l$, and $\mathcal{G}_{\infty}=\underset{n=1}{\overset{\infty}{\bigcup}} \mathcal{G}_{n}$.

 Suppose $\Sigma$ is irreducible. Then, for any $U_{n}=(u_{1},u_{2},\cdots,u_{n})\in\Sigma_{n}$, there exists a global pattern $W=(w_{k})_{k=-\infty}^{\infty}\in\Sigma$ with $w_{j}=u_{j}$ for $1\leq j\leq n$. Let $\xi_{0}=(w_{n})_{n=-\infty}^{0}\in \Sigma^{-}$.
From the definition of $\mathcal{G}_{\infty}=(G_{\infty},\mathcal{L})$, there exist $[\xi_{j}]$, $1\leq j\leq n$, such that

\begin{equation*}
[\xi_{0}] \overset{ u_{1}}{\rightarrow}[\xi_{1}] \overset{u_{2}}{\rightarrow}\cdots\overset{u_{n}}{\rightarrow} [\xi_{n}].
\end{equation*}
Therefore,

\begin{equation*}
\mathcal{B}_{n}(X_{\mathcal{G}_{\infty}})=\Sigma_{n}
\end{equation*}
for all $n\geq 1$. Moreover, let $N^{\ast}_{n}$ be the smallest number such that

\begin{equation}\label{eqn:6.6-6}
\mathcal{B}_{n}(\mathcal{G}_{\infty})=\underset{1\leq i,j\leq N^{\ast}_{n}}{\bigcup}\mathcal{B}_{n;i,j}(\mathcal{G}_{\infty}).
\end{equation}
%

Now, the following result can be obtained.

\begin{thm}
\label{theorem:6.2}
Suppose $\Sigma$ is irreducible. Let $\mathcal{G}_{\infty}=(G_{\infty},\mathcal{L})$ be the labeled graph constructed from $\Sigma$ by Krieger cover. Assume $\mathcal{G}_{\infty}$ is right-resolving and its associated adjacency matrix $\mathbb{T}$ is countable, irreducible, aperiodic and positive recurrent. Let $\lambda$ be the Perron value of $\mathbb{T}$. If
\item[(i)]

\begin{equation*}
\underset{n\rightarrow\infty}{\limsup}\frac{ \log N_{n}^{\ast}}{n}=0;
\end{equation*}
\item[(ii)] there exist $i^{\ast}$ and $j^{\ast}\in\mathbb{N}$ such that

\begin{equation*}
\left(\mathbb{T}^{n}\right)_{i^{\ast},j^{\ast}}\geq \left(\mathbb{T}^{n}\right)_{i,j}
\end{equation*}
for all $n\geq 1$ and $(i,j)\neq (i^{\ast},j^{\ast})$, then the topological entropy $h(\Sigma)=\log \lambda$.
\end{thm}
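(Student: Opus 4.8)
The plan is to compute the topological entropy directly from its definition $h(\Sigma)=\lim_{n\to\infty}\frac1n\log|\Sigma_n|$ (the limit exists by subadditivity of $\log|\Sigma_n|$) and to replace the count $|\Sigma_n|$ by a quantity controlled by the generalized Perron--Frobenius Theorem. Since $\Sigma$ is irreducible, the identity $\mathcal{B}_n(X_{\mathcal{G}_\infty})=\Sigma_n$ established just before the theorem gives $|\Sigma_n|=|\mathcal{B}_n(\mathcal{G}_\infty)|$, so it suffices to sandwich $|\mathcal{B}_n(\mathcal{G}_\infty)|$ between two quantities whose $n$-th root tends to $\lambda$. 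The natural candidates are $(\mathbb{T}^n)_{i^\ast,j^\ast}$ from below and $(N_n^\ast)^2(\mathbb{T}^n)_{i^\ast,j^\ast}$ from above.

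First I would record the combinatorial identity $|\mathcal{B}_{n;i,j}(\mathcal{G}_\infty)|=(\mathbb{T}^n)_{i,j}$. Because $\mathcal{G}_\infty$ is right-resolving, a label word issuing from a fixed vertex $i$ determines its path uniquely; hence the words of length $n$ with initial state $i$ and terminal state $j$ are in bijection with the edge-paths from $i$ to $j$ of length $n$, of which there are exactly $(\mathbb{T}^n)_{i,j}$, just as in the proof of Theorem 5.5. The lower bound is then immediate from the inclusion $\mathcal{B}_{n;i^\ast,j^\ast}(\mathcal{G}_\infty)\subseteq\mathcal{B}_n(\mathcal{G}_\infty)$, giving $|\mathcal{B}_n(\mathcal{G}_\infty)|\geq(\mathbb{T}^n)_{i^\ast,j^\ast}$. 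For the upper bound I would use the decomposition (\ref{eqn:6.6-6}); subadditivity of cardinality over the (generally overlapping) union yields
\begin{equation*}
|\mathcal{B}_n(\mathcal{G}_\infty)|\leq\sum_{1\leq i,j\leq N_n^\ast}|\mathcal{B}_{n;i,j}(\mathcal{G}_\infty)|=\sum_{1\leq i,j\leq N_n^\ast}(\mathbb{T}^n)_{i,j}\leq (N_n^\ast)^2\,(\mathbb{T}^n)_{i^\ast,j^\ast},
\end{equation*}
where the final inequality is precisely hypothesis (ii), namely that $(i^\ast,j^\ast)$ dominates every entry of $\mathbb{T}^n$.

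Combining the two bounds and applying $\frac1n\log(\cdot)$ gives
\begin{equation*}
\frac1n\log(\mathbb{T}^n)_{i^\ast,j^\ast}\leq\frac1n\log|\Sigma_n|\leq\frac{2\log N_n^\ast}{n}+\frac1n\log(\mathbb{T}^n)_{i^\ast,j^\ast}.
\end{equation*}
By part (i) of the generalized Perron--Frobenius Theorem (Theorem 5.1) the term $\frac1n\log(\mathbb{T}^n)_{i^\ast,j^\ast}$ converges to $\log\lambda$, while hypothesis (i) forces $\frac{2\log N_n^\ast}{n}\to0$ (note $N_n^\ast\geq1$ makes the sequence nonnegative, so the vanishing $\limsup$ is in fact a limit). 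A squeeze then yields $h(\Sigma)=\log\lambda$.

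The main obstacle I anticipate is the bookkeeping around the identity $|\mathcal{B}_{n;i,j}|=(\mathbb{T}^n)_{i,j}$ in the countably infinite right-resolving setting: one must use the right-resolving property to rule out two distinct paths from a common initial vertex carrying the same label, while remaining alert to the fact that a single word may lie in several classes $\mathcal{B}_{n;i,j}$ with different endpoints, so the union in (\ref{eqn:6.6-6}) is \emph{not} disjoint. This overcounting is harmless because the union is invoked only for the upper bound, where disjointness is never needed. The one genuinely essential input is hypothesis (ii), which is exactly what lets the double sum over the growing index range $1\leq i,j\leq N_n^\ast$ collapse to the single dominating entry $(\mathbb{T}^n)_{i^\ast,j^\ast}$ at the cost of the subexponential factor $(N_n^\ast)^2$; without uniform-in-$n$ domination the sum could not be controlled by its maximal entry.
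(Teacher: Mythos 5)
Your argument is correct, and the upper bound is exactly the paper's: decompose $\mathcal{B}_n(\mathcal{G}_\infty)$ via (\ref{eqn:6.6-6}), bound the (possibly overlapping) union by the sum, and use hypothesis (ii) to collapse the double sum to $(N_n^\ast)^2(\mathbb{T}^n)_{i^\ast,j^\ast}$, with hypothesis (i) killing the polynomial-in-exponent factor. Where you diverge is the lower bound. The paper does not use the identity $\left|\mathcal{B}_{n;i,j}\right|=(\mathbb{T}^n)_{i,j}$ there; instead it compares $\left|\mathcal{B}_n(\mathcal{G}_\infty)\right|\geq\left|\mathcal{B}_n(\mathcal{G}_m)\right|$ for the finite right-resolving truncations $\mathcal{G}_m$, deduces $h(\Sigma)\geq\sup_m\log\lambda_m$, and then invokes the finite-approximation result (Theorem 5.2) to get $\sup_m\lambda_m=\lambda$. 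You instead take the single inclusion $\mathcal{B}_{n;i^\ast,j^\ast}\subseteq\mathcal{B}_n$ together with the right-resolving bijection between label words and edge-paths from a fixed initial vertex, and appeal to part (i) of the generalized Perron--Frobenius Theorem, $\lambda=\lim_n\sqrt[n]{(\mathbb{T}^n)_{i,j}}$, which already holds for irreducible aperiodic recurrent $\mathbb{T}$. Your route is more economical (it avoids Theorem 5.2 entirely and produces a genuine two-sided squeeze on the same quantity $(\mathbb{T}^n)_{i^\ast,j^\ast}$), at the cost of having to justify the counting identity $\left|\mathcal{B}_{n;i,j}\right|=(\mathbb{T}^n)_{i,j}$ in the countable right-resolving setting --- which you do, and which the paper itself uses in the proof of Theorem 5.5, so nothing is circular. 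Two small points worth stating explicitly: $(\mathbb{T}^n)_{i^\ast,j^\ast}$ may vanish for small $n$, so the logarithm in your squeeze should be read for $n$ large (irreducibility plus aperiodicity guarantees eventual positivity); and, as you implicitly note, hypothesis (ii) is needed only for the upper bound --- any fixed index pair would serve for the lower bound since Theorem 5.1(i) is index-independent.
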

\begin{proof}

Let $\lambda_{n}$ be the Perron value of $\mathbb{T}_{n}$.
 it is easy to see that $\left| \mathcal{B}_{n}(\mathcal{G}_{\infty})  \right|\geq \left| \mathcal{B}_{n}(\mathcal{G}_{m})  \right|$ for all $n\geq1$. Given $m\geq 1$, since $\mathcal{G}_{m}$ is right-resolving,

\begin{equation*}
\underset{n\rightarrow \infty}{\limsup} \frac{1}{n}\log \left| \mathcal{B}_{n}(\mathcal{G}_{\infty})  \right|\geq \underset{m\geq 1}{\sup} h(\mathcal{G}_{m})=\underset{m\geq 1}{\sup} \log \lambda_{m}.
\end{equation*}
From Theorem 5.2,

\begin{equation*}
h(\Sigma)=\underset{n\rightarrow \infty}{\limsup} \frac{1}{n}\log \left| \mathcal{B}_{n}(\mathcal{G}_{\infty})  \right|\geq\underset{m\geq 1}{\sup} \log \lambda_{m}=\log \lambda.
\end{equation*}

Next, from (\ref{eqn:6.6-6}), (i) and (ii),
\begin{equation*}
\begin{array}{rl}
h(\Sigma)=\underset{n\rightarrow \infty}{\limsup} \frac{1}{n}\log \left| \mathcal{B}_{n}(\mathcal{G}_{\infty})  \right|\leq  &  \underset{n\rightarrow \infty}{\limsup} \frac{1}{n}\log \left( \underset{1\leq i,j\leq N_{n}^{\ast}}{\sum} \left| \mathcal{B}_{n;i,j}(\mathcal{G}_{\infty})  \right|\right) \\
& \\
\leq&   \underset{n\rightarrow \infty}{\limsup} \frac{1}{n}\log \left(N_{n}^{\ast}\right)^{2}\left( \mathbb{T}^{n} \right)_{i^{\ast},j^{\ast}} \\
& \\
=& \log\lambda.
\end{array}
\end{equation*}
Therefore, the result holds.

%
%

%
%
%
%

\end{proof}

The following result provides sufficient condition for the uniform distribution property (\ref{eqn:4.8-2}) of the natural measure.

\begin{thm}
\label{theorem:6.3}
Suppose $\mathcal{G}_{\infty}=(G_{\infty},\mathcal{L})$ is the labeled graph constructed from $\Sigma$ by Krieger cover. Assume $\mathcal{G}_{\infty}$ is right-resolving and its associated adjacency matrix $\mathbb{T}$ is countable, irreducible, aperiodic and positive recurrent. Let $\lambda$ be the Perron value of $\mathbb{T}$ and $\mathbf{l}=(l_{j})$ and $\mathbf{r}=(r_{j})$ be the left and right eigenvectors for $\lambda$, normalized with $\mathbf{l}\cdot\mathbf{r}=1$. If
\item[(i)] there exist $m_{1},m_{2}>0$ such that

\begin{equation}\label{eqn:6.6-51}
m_{1}\leq r_{j}\leq m_{2}
\end{equation}
for all $j\geq 1$,

\item[(ii)] there exists a finite set $\mathcal{N}\subset\mathbb{N}$ such that for any admissible pattern $\left[  s_{i_{1}}, s_{i_{2}},\cdots, s_{i_{n}}\right]$, there exists $q\in\mathcal{N}$ such that

\begin{equation}\label{eqn:6.6-52}
\left[\mathbb{T}(s_{i_{1}})\mathbb{T}(s_{i_{2}})\cdots \mathbb{T}(s_{i_{n}})\right]_{q,q'}=1,
\end{equation}
then the natural measure that is defined by (\ref{eqn:5.12}) satisfies the uniform distribution property (\ref{eqn:4.8-2}), i.e.,
there exist constants $\beta\geq\alpha > 0$ such that

\begin{equation*}
\frac{\alpha}{\lambda^{n}}\leq \mu\left([s_{i_{1}},s_{i_{2}},\cdots,s_{i_{n}}]\right)\leq\frac{\beta}{\lambda^{n}}
\end{equation*}
for any admissible pattern $[s_{i_{1}},s_{i_{2}},\cdots,s_{i_{n}}]$.

\end{thm}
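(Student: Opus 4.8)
The plan is to exploit the right-resolving structure, which forces every matrix $\mathbb{T}(s)$ to have at most one nonzero entry (equal to $1$) in each row, and then to combine this with hypotheses (i) and (ii) to bound the single sum appearing in (\ref{eqn:5.12}) from above and below by constants independent of the pattern and of its length $n$.

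First I would record the key structural fact. Because $\mathcal{G}_{\infty}$ is right-resolving, at each vertex $i$ at most one edge labeled $s$ departs, so $\sum_{j}\mathbb{T}(s)_{i,j}\le 1$ for every $i$ and every $s\in\mathcal{S}$. By the elementary observation in the proof of Lemma 4.7 (which carries over verbatim to the countable setting, since it concerns only a single fixed row and all entries are non-negative), this row-sum property is preserved under products. Hence $M\equiv\mathbb{T}(s_{i_{1}})\mathbb{T}(s_{i_{2}})\cdots\mathbb{T}(s_{i_{n}})$ is again a $0$-$1$ matrix with $\sum_{q'}M_{q,q'}\le 1$ for every $q$. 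Writing $D=\{q\mid \sum_{q'}M_{q,q'}=1\}$ and letting $\phi(q)$ be the unique column index with $M_{q,\phi(q)}=1$ for $q\in D$, the double sum in (\ref{eqn:5.12}) collapses to
\[
\mu([s_{i_{1}},\dots,s_{i_{n}}])=\frac{1}{\lambda^{n}}\sum_{q\in D}l_{q}\,r_{\phi(q)}.
\]

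For the upper bound I would use hypothesis (i). Since $\mathbf{l}\cdot\mathbf{r}=\sum_{j}l_{j}r_{j}=1$ and $r_{j}\ge m_{1}>0$ for all $j$, we obtain $\sum_{j}l_{j}\le m_{1}^{-1}$; in particular the left eigenvector is summable. Using $r_{\phi(q)}\le m_{2}$ then gives
\[
\sum_{q\in D}l_{q}\,r_{\phi(q)}\le m_{2}\sum_{q\in D}l_{q}\le m_{2}\sum_{q}l_{q}\le \frac{m_{2}}{m_{1}},
\]
so one may take $\beta=m_{2}/m_{1}$. For the lower bound I would use hypothesis (ii): there exists $q\in\mathcal{N}\cap D$, that is, the pattern is admissible starting from some state $q$ in the finite set $\mathcal{N}$, so the single term $l_{q}r_{\phi(q)}$ already satisfies $l_{q}r_{\phi(q)}\ge \big(\min_{p\in\mathcal{N}}l_{p}\big)m_{1}$. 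Because $\mathcal{N}$ is finite and every $l_{p}>0$ (the left eigenvector is strictly positive by the generalized Perron--Frobenius Theorem), $\alpha\equiv m_{1}\min_{p\in\mathcal{N}}l_{p}>0$; since the remaining terms are non-negative we conclude $\sum_{q\in D}l_{q}r_{\phi(q)}\ge \alpha$.

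The routine algebra becomes trivial once the sum has been collapsed, so the only genuine content lies in the first step and in the summability $\sum_{j}l_{j}<\infty$ extracted from (i). The main obstacle I anticipate is purely bookkeeping: verifying carefully that the row-sum-$\le 1$ property indeed passes to the (countable) product $M$ and that the partial map $\phi$ is well defined on all of $D$, so that (\ref{eqn:5.12}) really reduces to a single indexed sum over starting states. With that reduction in hand, (i) yields both the upper bound and the finiteness of $\mathbf{l}$, while (ii) guarantees at least one surviving term bounded below, which together establish the uniform distribution estimate (\ref{eqn:4.8-2}).
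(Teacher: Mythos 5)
Your proposal is correct and follows essentially the same route as the paper's proof: both use the right-resolving property to collapse the double sum in (\ref{eqn:5.12}) to a single sum over starting states, extract $\sum_{q}l_{q}<\infty$ from hypothesis (i) together with $\mathbf{l}\cdot\mathbf{r}=1$ to get the upper bound, and use hypothesis (ii) with $\alpha=m_{1}\min_{q\in\mathcal{N}}l_{q}$ for the lower bound. Your explicit constant $\beta=m_{2}/m_{1}$ is just a further bound on the paper's $\beta=m_{2}\sum_{q}l_{q}$, so the two arguments are the same in substance.
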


\begin{proof}

Since $\mathcal{G}_{\infty}$ is right-resolving, for any $q\geq 1$ and any admissible pattern $\left[ s_{i_{1}}, s_{i_{2}},\cdots, s_{i_{n}} \right]$ in $\Sigma$, there is at most one $q'\geq 1$ such that
(\ref{eqn:6.6-52}) holds. Clearly, from (\ref{eqn:6.6-51}), $\mathbf{l}\cdot\mathbf{r}=1$ implies

\begin{equation*}
0<\underset{q\geq 1}{\sum}l_{q}<\infty.
\end{equation*}
Let $\beta= m_{2}\underset{q\geq 1}{\sum}l_{q}$.
Then, by (i), for any admissible pattern $\left[ \left( s_{i_{1}}, s_{i_{2}},\cdots, s_{i_{n}} \right)\right]$,
\begin{equation*}
\begin{array}{rl}
\mu\left(\left[  s_{i_{1}}, s_{i_{2}},\cdots, s_{i_{n}}\right]\right)= &  \underset{q,q'}{\sum}l_{q}\left[\mathbb{T}(s_{i_{1}})\mathbb{T}(s_{i_{2}})\cdots\mathbb{T}(s_{i_{n}})\right]_{q,q'}r_{q'}/\lambda^{n} \\
& \\
\leq & m_{2}\underset{q,q'}{\sum}l_{q}\left[\mathbb{T}(s_{i_{1}})\mathbb{T}(s_{i_{2}})\cdots\mathbb{T}(s_{i_{n}})\right]_{q,q'}/\lambda^{n} \\
& \\
\leq &  m_{2}\underset{q}{\sum}l_{q}/\lambda^{n}=\beta/\lambda^{n}.
\end{array}
\end{equation*}

On the other hand, let
 \begin{equation*}
\alpha=m_{1}\left(\underset{q\in\mathcal{N}}{\min}\{l_{q}\}\right).
 \end{equation*}
 Then, by (i) and (ii), for any admissible pattern $\left[  s_{i_{1}}, s_{i_{2}},\cdots, s_{i_{n}}\right]$,
\begin{equation*}
\begin{array}{rl}
\mu\left(\left[  s_{i_{1}}, s_{i_{2}},\cdots, s_{i_{n}}\right]\right)= &  \underset{q,q'}{\sum}l_{q}\left[\mathbb{T}(s_{i_{1}})\mathbb{T}(s_{i_{2}})\cdots\mathbb{T}(s_{i_{n}})\right]_{q,q'}r_{q'}/\lambda^{n} \\
& \\
\geq & m_{1} \underset{q,q'}{\sum}l_{q}\left[\mathbb{T}(s_{i_{1}})\mathbb{T}(s_{i_{2}})\cdots\mathbb{T}(s_{i_{n}})\right]_{q,q'}/\lambda^{n} \\
& \\
\geq & m_{1} \left(\underset{q\in \mathcal{N}}{\min}\{l_{q}\}/\lambda^{n}\right)=\alpha/\lambda^{n}.
\end{array}
\end{equation*}

Therefore, the result follows.
\end{proof}

In the following theorem, we prove that when the natural measure $\mu$ of $\Sigma$ is uniformly distributed, then it is the only measure with ergodicity and maximal entropy.

\begin{thm}
\label{theorem:6.4}
Suppose $\Sigma$ is irreducible. Let $\mathcal{G}_{\infty}=(G_{\infty},\mathcal{L})$ be the labeled graph constructed from $\Sigma$ by Krieger cover. Assume $\mathcal{G}_{\infty}$ is right-resolving and its associated adjacency matrix $\mathbb{T}$ is countable, irreducible, aperiodic and positive recurrent.
Let $\lambda$ be the Perron value of $\mathbb{T}$.
If $h(\Sigma)=\log \lambda$ and the natural measure $\mu$ of $\Sigma$ satisfies (\ref{eqn:4.8-2}), then the natural measure is the only measure with ergodicity and maximal entropy.
\end{thm}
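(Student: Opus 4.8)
The plan is to verify the three claims implicit in the statement---that $\mu$ is ergodic, that $\mu$ has maximal entropy $\log\lambda$, and that no other ergodic measure attains this entropy---by following the proof of Theorem 4.9 (and, behind it, Theorem 8.10 of Walters \cite{2}). The key simplification is that the alphabet $\mathcal{A}$ of $\Sigma$ is finite, so the partition $\xi$ into $0$-cylinders is a finite two-sided generator and the entire finite-partition entropy apparatus of Section 2 applies without change; the hypothesis (\ref{eqn:4.8-2}) supplies the uniform distribution estimate that, in the finite sofic case, was the content of Lemma 4.8.

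First I would show $\mu$ has maximal entropy. With $\xi_n=\bigvee_{i=0}^{n-1}\sigma^{-i}\xi$ and $H_\mu(\xi_n)=-\sum_{w\in\mathcal{B}_n}\mu([w])\log\mu([w])$, I substitute (\ref{eqn:4.8-2}) through $-\log\mu([w])=n\log\lambda-\log(\lambda^{n}\mu([w]))$, where $\log\alpha\le\log(\lambda^{n}\mu([w]))\le\log\beta$. Since $\sum_{w}\mu([w])=1$, this gives $H_\mu(\xi_n)=n\log\lambda+O(1)$ and hence $h_\mu(\sigma)=\lim_n\frac{1}{n}H_\mu(\xi_n)=\log\lambda$; as $h(\Sigma)=\log\lambda$ by hypothesis, the variational principle (Theorem 2.3) identifies $\mu$ as a measure of maximal entropy. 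Ergodicity is inherited from the cover: by Theorems 5.3 and 5.5, $\mu=\mathcal{L}_\infty^{*}(p,P)$ is the image under the sliding-block code $\mathcal{L}_\infty$ of the Markov measure $(p,P)$ of the irreducible, aperiodic, positive-recurrent countable-state shift $\Sigma(\mathbb{T})$; that Markov measure is ergodic, and a factor of an ergodic measure is ergodic.

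Next I would prove uniqueness. Let $m$ be any ergodic $\sigma$-invariant probability measure with $h_m(\sigma)=\log\lambda$, and put $D_n=\sum_{w\in\mathcal{B}_n}m([w])\log\frac{m([w])}{\mu([w])}\ge 0$, the relative entropy of the length-$n$ marginals (Gibbs' inequality; note $m([w])>0$ forces $w$ admissible, so $\mu([w])\ge\alpha\lambda^{-n}>0$). Subadditivity gives $H_m(\xi_n)\ge n\,h_m(\sigma)=n\log\lambda$, while $\mu([w])\ge\alpha\lambda^{-n}$ yields $-\sum_{w}m([w])\log\mu([w])\le n\log\lambda-\log\alpha$. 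Writing $D_n=-H_m(\xi_n)-\sum_{w}m([w])\log\mu([w])$ and combining the two estimates gives $D_n\le-\log\alpha$ for every $n$; since $D_n$ is non-decreasing under refinement, $D_\infty=\lim_n D_n<\infty$.

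Finally, finite total relative entropy forces $m=\mu$. The Radon--Nikodym derivatives $f_n=dm_n/d\mu_n$ form a non-negative martingale on the $\sigma$-algebra $\mathcal{F}^{+}$ generated by the coordinates $0,1,2,\dots$, with $\sup_n\int f_n\log f_n\,d\mu=\sup_n D_n<\infty$; by the de la Vall\'{e}e--Poussin criterion (with $t\mapsto t\log t$) the family is uniformly integrable, so it converges in $L^{1}(\mu)$ to a density witnessing $m\ll\mu$ on $\mathcal{F}^{+}$, and the $\sigma$-invariance of both measures propagates this to all of $\mathfrak{B}(\Sigma)$. Two ergodic invariant measures with $m\ll\mu$ must coincide, because the invariant density is $\mu$-almost everywhere constant; hence $m=\mu$. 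The main obstacle is exactly this last passage, from the uniform bound $\sup_n D_n<\infty$ to absolute continuity and then to equality: unlike the finite Parry case there is no explicit closed-form density to compare against, so the argument must run entirely through the martingale/uniform-integrability step, with (\ref{eqn:4.8-2}) entering only to produce the bound on $D_n$. The remaining bookkeeping---the $O(1)$ term in $H_\mu(\xi_n)$ and the two-sided extension of absolute continuity---is routine and would merely be indicated.
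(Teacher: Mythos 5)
Your proposal is correct, and it fills in with actual arguments what the paper's proof handles by citation. The maximal-entropy step is the same as the paper's: both substitute the uniform-distribution bounds (\ref{eqn:4.8-2}) into the block entropy to get $H_\mu\bigl(\bigvee_{i=0}^{n-1}\sigma^{-i}\xi\bigr)=n\log\lambda+O(1)$, which is exactly the computation (\ref{eqn:4.11-1}) from Theorem 4.9(ii), and then invoke $h(\Sigma)=\log\lambda$. You diverge in the other two steps. For ergodicity the paper works directly on cylinders, using $\lim_{n}\mathbb{T}^{n}/\lambda^{n}=\mathbf{r}\mathbf{l}$ (item (v) of Theorem 5.1) to verify $\mu(\sigma^{-k}E\cap F)\to\mu(E)\mu(F)$ as in Theorem 4.9(i); you instead push the ergodic countable-state Markov measure $(p,P)$ forward through the labeling factor map. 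Your route is shorter and softer, but it silently uses the countable-state analogue of Theorem 4.6, namely that the formula (\ref{eqn:5.12}) really is $\mathcal{L}_\infty^{*}(p,P)$ --- true by telescoping $p_{q_0}P_{q_0,q_1}\cdots P_{q_{n-1},q_n}$ over labeled paths, but worth a line; the paper's route has the advantage of giving strong mixing rather than bare ergodicity. For uniqueness the paper follows Walters' Theorem 8.10: distinct ergodic measures are mutually singular, a separating set is approximated by unions of cylinders, and the lower bound $\mu([w])\ge\alpha\lambda^{-n}$ converts the cylinder count into an entropy deficit for the competitor. Your relative-entropy/martingale argument consumes exactly the same two inputs ($H_m(\xi_n)\ge n\log\lambda$ from subadditivity and $\mu([w])\ge\alpha\lambda^{-n}$) but routes them through the uniform bound $\sup_n D_n\le-\log\alpha$, uniform integrability of the densities, absolute continuity $m\ll\mu$, and constancy of the invariant density. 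This is closer in spirit to Bowen's uniqueness of equilibrium states; it avoids the approximation-of-a-singular-set step and works without modification in the countable-cover setting, at the cost of the martingale machinery. Both arguments are valid; yours is a legitimate alternative rather than a reconstruction of the paper's.
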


\begin{proof}
This proof is similar to that of Theorem 4.9. First, from (v) of Theorem 5.1, the ergodicity can be obtained. Since $h(\Sigma)=\log \lambda$ and $\mu$ satisfies (\ref{eqn:4.8-2}), (\ref{eqn:4.11-1}) yields that $\mu$ has maximal entropy $\log \lambda$. From the uniform distribution property, the uniqueness can be obtained as the proof of Theorem 8.10 in Walters \cite{2}.
%
%
%
%

\end{proof}
%
%
%
%
%
%
%
%
%
%

To illustrate Theorem 6.4, the following context free shift \cite{1} is investigated.

\begin{ex}
\label{example:6.2}
Consider the context free shift $\Sigma_{cf}$ whose forbidden set $\mathcal{F}$ is given by

\begin{equation}\label{eqn:6.7}
\mathcal{F}=\left\{
ab^{k}c^{l}a \hspace{0.1cm}\mid \hspace{0.1cm}  k\neq l \text{ for }k,l\geq 0
\right\}.
\end{equation}
The countable states are defined as follows.
\begin{equation*}
E_{j}=\left\{
\xi a  \hspace{0.1cm}\mid \hspace{0.1cm} \xi\in \Sigma^{-}
\right\}, \hspace{0.2cm}j\geq 0,
\end{equation*}

\begin{equation*}
F_{j}=\left\{
\xi ab^{m+j}c^{m}  \hspace{0.1cm}\mid \hspace{0.1cm} \xi\in \Sigma^{-}, m\geq 1
\right\}, \hspace{0.2cm}j\geq 0,
\end{equation*}

\begin{equation*}
P=\left\{
\xi ac^{m}b^{k}c^{l}  \hspace{0.1cm}\mid \hspace{0.1cm} \xi\in \Sigma^{-}, m,k\geq 1 \text{ and }l\geq 0
\right\}
\end{equation*}
and

\begin{equation*}
Q=\left\{
\xi ab^{i}c^{i+j}  \hspace{0.1cm}\mid \hspace{0.1cm} \xi\in \Sigma^{-}, i\geq 0 \text{ and }j\geq 1
\right\}.
\end{equation*}
Then, the labeled graph $\mathcal{G}_{\infty}=(G_{\infty},\mathcal{L})$ that represents (\ref{eqn:6.7}) is drawn as follows.

\begin{equation*}
\psfrag{a}{\scriptsize{$a$}}
\psfrag{b}{\scriptsize{$b$}}
\psfrag{c}{\scriptsize{$c$}}
\psfrag{p}{\small{$P$}}
\psfrag{q}{\small{$Q$}}
\psfrag{e}{\small{$F_{0}$}}
\psfrag{f}{\small{$F_{1}$}}
\psfrag{g}{\tiny{$F_{n-1}$}}
\psfrag{h}{\small{$F_{n}$}}
\psfrag{k}{\small{$E_{0}$}}
\psfrag{l}{\small{$E_{1}$}}
\psfrag{m}{\small{$E_{2}$}}
\psfrag{n}{\small{$E_{n}$}}
\psfrag{o}{\tiny{$E_{n+1}$}}
\psfrag{z}{\small{$\vdots$}}
\includegraphics[scale=0.7]{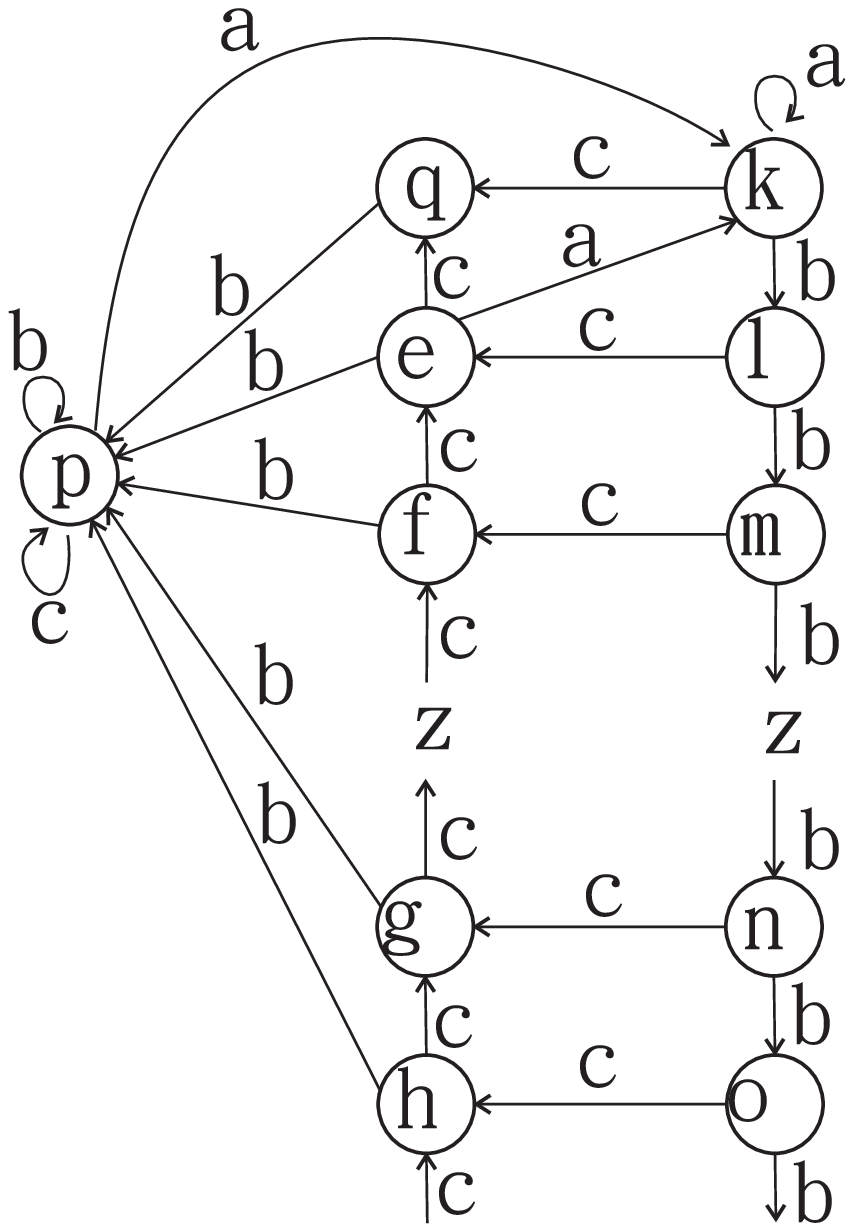}
\end{equation*}

\begin{equation*}
\text{Figure 6.1}
\end{equation*}

According to Fig. 6.1, the symbolic adjacency matrix $\mathcal{T}=[\mathcal{T}_{i,j}]_{i,j\in\mathbb{N}}$ of $\mathcal{G}_{\infty}=(G_{\infty},\mathcal{L})$ is given by

\begin{equation*}
\psfrag{a}{\scriptsize{$a$}}
\psfrag{b}{\scriptsize{$b$}}
\psfrag{c}{\scriptsize{$c$}}
\psfrag{d}{\tiny{$b+c$}}
\psfrag{f}{\small{$P$}}
\psfrag{g}{\small{$Q$}}
\psfrag{h}{\small{$E_{0}$}}
\psfrag{j}{\small{$E_{1}$}}
\psfrag{k}{\small{$F_{0}$}}
\psfrag{l}{\small{$E_{2}$}}
\psfrag{m}{\small{$F_{1}$}}
\psfrag{n}{\small{$E_{3}$}}
\psfrag{o}{\small{$F_{2}$}}
\psfrag{t}{\small{$\mathcal{T}=$}}
\psfrag{x}{\small{$\vdots$}}
\psfrag{y}{\small{$\cdots$}}
\psfrag{z}{\small{$\ddots$}}
\includegraphics[scale=1.1]{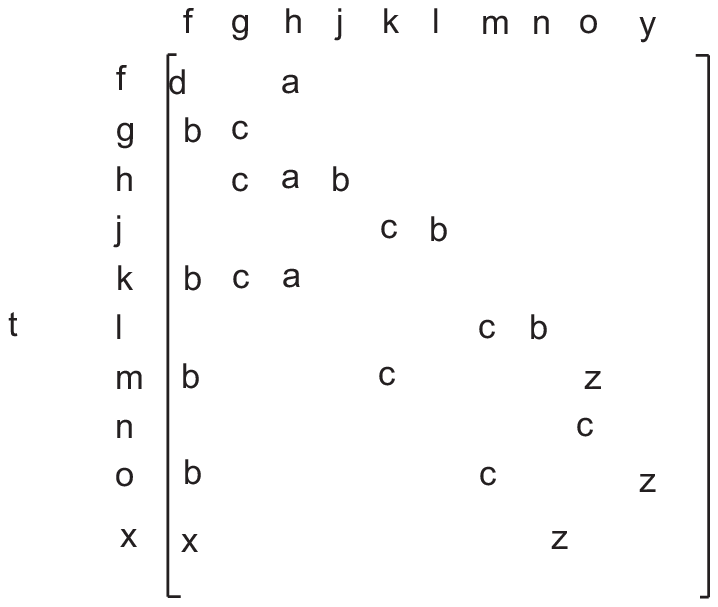}
\end{equation*}

$C^{*}$-algebra was used to show that the topological entropy $h(\Sigma_{cf})=\log \left(1+\sqrt{1+\sqrt{3}}\right)$ \cite{102}. Let $\mathbb{T}=[\mathbb{T}_{i,j}]$ be the adjacency matrix of $\mathcal{T}$.
That $\mathbb{T}$ is irreducible and aperiodic can be easily determined. Let $\lambda$ be the Perron value of $\mathbb{T}$. It remains to show that $\mathbb{T}$ is positive
recurrent, such that the left eigenvector $\mathbf{l}=(l_{j})$ and right eigenvector $\mathbf{r}=(r_{j})$ for $\lambda$ satisfy $\mathbf{l}\cdot\mathbf{r}<\infty$.

To show $\mathbb{T}$ is positive recurrent, for $n\geq 1$, define the finite submatrix $A_{n}=[a_{n;i,j}]_{(2n+3)\times (2n+3)}$ of $\mathbb{T}$ by

\begin{equation*}
a_{n;i,j}=\mathbb{T}_{i,j}
\end{equation*}
for $1\leq i,j\leq 2n+3$. Clearly, $A_{n}$ is irreducible and aperiodic.

Let $\lambda_{n}$ be the Perron value of $A_{n}$. Let $\mathbf{l}^{(n)}=(l^{(n)}_{1},l^{(n)}_{2},\cdots,l^{(n)}_{2n+3})$ and $\mathbf{r}^{(n)}=(r^{(n)}_{1},r^{(n)}_{2},\cdots,r^{(n)}_{2n+3})^{t}$ be the left and right eigenvectors for $\lambda_{n}$ with $l^{(n)}_{1}=r^{(n)}_{1}=1$. Notably, $\lambda> \lambda_{n}> 2$ for all $n\geq 2$.

The following five results can be obtained immediately.
\newline
\item[(i)] $\frac{l^{(n)}_{2k}}{l^{(n)}_{2k+2}}=\lambda_{n}>2$ for $n\geq2$ and $2\leq k \leq n$;
\newline
\item[(ii)] $\frac{l^{(n)}_{2k+1}}{l^{(n)}_{2k+3}}>\lambda_{n}>2$ for $n\geq2$ and $2\leq k \leq n$;
\newline
\item[(iii)] $r_{2}^{(n)}> \frac{1}{\lambda_{n}}$ and $r_{3}^{(n)}> \frac{1}{\lambda_{n}^{2}}$ for $n\geq2$,
\newline
\item[(iv)] $r^{(2n)}_{2k+3}>\frac{1}{\lambda_{n}}$ for $n\geq2$ and $2\leq k \leq 2n$;
\newline
\item[(v)] $r^{(2n)}_{2k+2}>\frac{1}{\lambda^{2}_{n}}$ for $n\geq2$ and $2\leq k \leq 2n$.
\newline

By Theorem 5.2, $\mathbf{l}\cdot\mathbf{r}<\infty$ with $l_{1}=r_{1}=1$ can be shown. Then, $\mathbb{T}$ is positive recurrent. Hence, the context free shift has the natural measure that is defined by (5.15).

Based on Fig. 6.1, it is easy to see that the admissible pattern $ab^{n}c^{n}a$ can be generated from $[\mathcal{T}_{i,j}]_{1\leq i,j\leq 2n+3}$. Indeed,

\begin{equation*}
P\overset{a}{\rightarrow}E_{0}\overset{b}{\rightarrow}E_{1}\overset{b}{\rightarrow}\cdots\overset{b}{\rightarrow} E_{n}\overset{c}{\rightarrow}F_{n-1}\overset{c}{\rightarrow}\cdots \overset{c}{\rightarrow} F_{0} \overset{a}{\rightarrow}E_{0}.
\end{equation*}
Then, that $N_{n}^{*}\leq n+1$ can be verified. Clearly, $\left(\mathbb{T}^{n}\right)_{1,1}\geq \left(\mathbb{T}^{n}\right)_{i,j}$ for all $n\geq 1$ and $(i,j)\neq(1,1)$. Therefore, from Theorem 6.2,
\begin{equation*}
h(\Sigma_{cf})=\log \left(1+\sqrt{1+\sqrt{3}}\right)=\log \lambda.
\end{equation*}

Next, the uniform distribution property is proven as follows. From (iii)$\sim$(v),
\begin{equation}\label{eqn:6.8}
\frac{1}{\lambda^{2}}\leq r_{j}\leq 1
\end{equation}
for all $j\geq1$. Based on Fig.6.1, it can be verified that for any admissible pattern $\left[ s_{i_{1}}, s_{i_{2}},\cdots, s_{i_{n}}\right]$, there exist $q\in \{1,3\}$ and $q\geq 1$ such that (\ref{eqn:6.6-52}) holds.
Hence, by Theorem 6.3, the natural measure satisfies the uniform distribution property.

Therefore, from Theorem 6.4, the natural measure is the only measure with ergodicity and maximal entropy.

\end{ex}

As in studying the countable state shifts, the positive recurrent condition is essential to ensure the existence of natural measure and the natural measure is represented in the form (\ref{eqn:5.12}). When the Krieger cover of a shift spaces is not positive recurrent, then the existence of the natural measure cannot be guaranteed and the natural measure cannot be represented in the form of (\ref{eqn:5.12}).

\bibliographystyle{amsplain}

\end{document}